\documentclass[12pt, a4paper]{amsart}
\usepackage{amsfonts, amssymb, amsmath}
\usepackage{amsthm}
\usepackage[margin=2.5cm]{geometry}
\usepackage{enumerate}
\usepackage{scrextend}
\usepackage[all]{xy}


\newcommand{\N}{{\mathbb N}}

\newcommand{\Q}{{\mathbb Q}}
\newcommand{\R}{{\mathbb R}}
\newcommand{\Z}{{\mathbb Z}}

\newcommand{\A}{{\mathbb A}}
\newcommand{\Cc}{{\mathbb C}}
\newcommand{\Oo}{\mathcal{O}}

\newcommand{\se}[2]{\left\lbrace #1 \mbox{ }\vline\mbox{ } #2 \right\rbrace}

\newcommand{\mul}{\mbox{mult }}
\newcommand{\mull}[1]{\mbox{mult}_{#1}}
\newcommand{\tl}[1]{\tilde{#1}}
\newcommand{\st}{^{\ast}}

\newcommand{\sep}[2]{\left\lbrace\begin{matrix} #1 & \mbox{ if } k<r, \\ #2 & \mbox{ if } k=r. \end{matrix}\right.}
\newcommand{\ol}[1]{\overline{#1}}
\newcommand{\rd}[1]{\lfloor #1\rfloor}
\newcommand{\ru}[1]{\lceil #1\rceil}
\newcommand{\w}{\mbox{wt}}
\newcommand{\tcc}[2]{\begin{tabular}{c}#1\\ #2\end{tabular}}
\newcommand{\ang}[1]{\langle #1\rangle}
\newcommand{\du}{^{\vee}}
\newcommand{\spc}{\mbox{Spec }}
\newcommand{\gq}[1]{_{\geq #1}}

\newtheorem{thm}{Theorem}[section]
\newtheorem{pro}[thm]{Proposition}
\newtheorem{cor}[thm]{Corollary}
\newtheorem{lem}[thm]{Lemma}
\theoremstyle{definition}
\newtheorem{rk}[thm]{Remark}
\newtheorem{eg}[thm]{Example}
\newtheorem{cov}[thm]{Convention}
\newtheorem*{defn}{Definition}

\setlength{\textwidth}{6.5in}
\setlength{\textheight}{9in}
\begin{document}
\title{On the Nash problem for terminal threefolds of type $cA/r$}
\author{Hsin-Ku Chen}
\date{}
\address{Department of Mathematics, National Taiwan University, No. 1, Sec. 4, Roosevelt Rd., Taipei 10617, Taiwan} 
\email{d02221002@ntu.edu.tw}
\begin{abstract}
We study Nash valuations and essential valuations of terminal threefolds of type $cA/r$. If $r=1$ or the given threefold is $\Q$-factorial,
then all the Nash valuations and essential valuations can be completely described. We construct non-Gorenstein or non-$\Q$-factorial
counter examples for the Nash problem.
\end{abstract}
\maketitle
\section{Introduction}
The space of arcs is the scheme parametrize the morphisms form a formal disk to a given variety. People are interesting those arcs passing
through singularities. Ideally, all the information of the singularity are encoded in the spaces of arcs passing through singularities.
One only need to know that how to read those information.\par
Nash \cite{nash} suggests one approach. He interpret irreducible components of those arcs passing through singularities in the arc space
as a valuation near the singular locus (so called Nash valuations), and he notice that those valuations are divisorial valuations which appears
on every resolution of singularities. This correspondence is called the Nash map. The Nash problem asks whether the Nash map is an one-to-one
correspondence or not. In dimension two, it is known that the Nash problem has positive answer \cite{fp}. It is also known that if the Nash map
of a toric variety \cite{ik} or a Schubert varieties in Grassmannians \cite{dn} is bijective.
But in general the Nash map is not surjective. There are several counter examples \cite{dF}, \cite{ik}. In this paper we will also construct
two counter examples for the Nash problem.\par
In \cite{jk}, Johnson and Koll\'{a}r described the Arc space of $cA$-type isolated singularities in general dimension.
In the case of three-dimensional isolated $cA_1$ singularities, they can describe the essential valuations in detail. 
In this paper we only focus on three-dimensional singularities. Instead, we study terminal singularities of type $cA/r$, namely the
cyclic-quotient of $cA$ type singularities. The $cA/r$ singularity is one of the most common three-dimensional terminal singularities.
When running three-dimensional minimal model program, this kind of singularities occur naturally. It thus becomes an important issue
to understand $cA/r$ singularities when studying three-dimensional birational geometry.
In this paper we describe the Nash valuations of $cA/r$ singularities and we can give a explicit description for essential valuations if the
singularity is $\Q$-factorial.
\begin{thm}\label{nthm}
	Let \[X=(xy-f(z,u)=0)\subset\Cc^4_{(x,y,z,u)}/\frac{1}{r}(a,-a,1,0)\] be a $cA/r$ singularity with $r\geq 1$.
	Then there are a one-to-one correspondence between Nash valuations of $X$ and exceptional divisors over the singular point of $X$ which is
	of discrepancy less than or equal to one. More precisely, Let $w_k$ be the weight such that $w_k(z,u)=(\frac{k}{r},1)$ and let
			$m_k=\w_{w_k}f(z,u)$. Then Nash valuations of $X$ is the following valuations
	\[\se{\sigma^k_i}{\sigma^k_i(x,y,z,u)=\frac{1}{r}(\ol{ka}+ir,(m_k-i)r-\ol{ka},k,r)}\]
		for $1\leq k\leq r$ and \[ \sep{0\leq i\leq m_k-1}{1\leq i\leq m_r-1}\]
\end{thm}
For essential valuations, we can estimate its discrepancy.
\begin{pro}\label{epro}
	Let $X$ be a three-dimensional terminal singularity of type $cA/r$ with $r\geq 1$. Then every essential divisor has discrepancy
	less than or equal to two.
\end{pro}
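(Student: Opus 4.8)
The plan is to reduce the statement, via the general structure of the Nash map, to the discrepancies of the Nash divisors of Theorem~\ref{nthm} together with an induction. Recall that if $E$ is an essential divisor over $X$ then, for every resolution $g\colon Y\to X$ with $g^{-1}(\mathrm{Sing}\,X)$ a divisor, the center $c_Y(E)$ is an irreducible component of $g^{-1}(\mathrm{Sing}\,X)=\mathrm{Exc}(g)$; in particular $E$ appears as a prime exceptional divisor on $Y$, so it suffices to produce one resolution on which every such component has discrepancy $\le 2$. Let $\phi\colon X_1\to X$ be the partial resolution built from the weighted blow-ups used to prove Theorem~\ref{nthm}, extracting exactly the divisors $\sigma^k_i$, so that $a(\sigma^k_i,X)\le 1$. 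Factoring an arbitrary resolution of $X$ through $X_1$ and writing $\mathrm{Exc}(Y\to X)=\mathrm{Exc}(Y\to X_1)\cup\widetilde{\mathrm{Exc}(\phi)}$ as a union of two divisors, an irreducible component of the left-hand side is either (the strict transform of) some $\sigma^k_i$ or a component of $\mathrm{Exc}(Y\to X_1)$; since the resolutions factoring through $X_1$ are cofinal, this yields the dichotomy: \emph{an essential divisor $E$ over $X$ is either one of the $\sigma^k_i$, or it is essential over $X_1$.}

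In the first case $a(E,X)\le 1$. In the second case I would use the explicit toric model: the weighted blow-up of $\Cc^4_{(x,y,z,u)}/\tfrac1r(a,-a,1,0)$ along the fan governing Theorem~\ref{nthm} should leave $X_1$ with only isolated terminal singularities which are cyclic quotient points or $cA/r'$ points of strictly smaller invariants, so one may induct (the base case $r=1$ with $f$ of minimal multiplicity being handled directly from the arc-space description of $cA$ singularities of Johnson--Koll\'ar). For the cyclic quotient points, Kawamata's classification of divisorial contractions to terminal quotient singularities forces any extremal extraction to be a Kawamata blow-up of discrepancy $1/r'<1$, and iterating these gives a (toric) resolution of $X_1$ over the quotient locus all of whose exceptional divisors have discrepancy $<1$ over $X_1$. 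Thus an essential divisor $E$ which is not a $\sigma^k_i$ satisfies $a(E,X_1)\le 1$.

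It remains to pass from $a(E,X_1)$ to $a(E,X)$. As $E$ is essential over $X_1$, its center $Z=c_{X_1}(E)$ lies in $\mathrm{Sing}(X_1)\subseteq\bigcup_{k,i}\sigma^k_i$, and from $K_{X_1}=\phi^*K_X+\sum a(\sigma^k_i,X)\,\sigma^k_i$ one gets $a(E,X)=a(E,X_1)+\sum_{k,i} m^k_i\, a(\sigma^k_i,X)$, the sum running over the $\sigma^k_i$ through $Z$ with $m^k_i\ge 1$ the corresponding multiplicities. The main obstacle is to bound this correction term by $1$: one must show that an essential divisor cannot be centered in the intersection of two or more of the $\sigma^k_i$, nor meet a single one with multiplicity $\ge 2$, for otherwise it fails to be a component of $\mathrm{Exc}$ on a suitably chosen resolution. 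I expect this to require a chart-by-chart inspection, in the toric blow-up, of which rays of the refined fan can be essential, combined with the formula expressing the discrepancy of such a ray as its toric log discrepancy minus $\mathrm{wt}(xy-f)$ minus one; the inequality $a(E,X)\le a(E,X_1)+1\le 2$ then falls out.
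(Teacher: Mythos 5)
There is a genuine gap, and it sits exactly where you place your ``main obstacle.'' Your argument needs two bounds simultaneously: that an essential divisor $E$ not among the $\sigma^k_i$ satisfies $a(E,X_1)\le 1$, and that the correction term $\sum m^k_i\,a(\sigma^k_i,X)$ is at most $1$. Neither is established. For the first, your own induction is inconsistent: $X_1$ still carries a $cA/r$ point (the origin of the chart $U_u$ after the weight-$\frac{1}{r}(a,rm-a,1,r)$ blow-up), and the inductive hypothesis --- the proposition itself --- only yields $a(E,X_1)\le 2$ for essential divisors centered there, not $\le 1$; combined with a correction term of up to $1$ this gives $a(E,X)\le 3$, so the induction on the statement does not close. (Kawamata's classification handles only the cyclic quotient points of $X_1$, not the $cA/r$ point, through which the interesting non-Nash essential divisors are actually centered.) For the second, the claim that an essential divisor cannot meet two of the $\sigma^k_i$, or one of them with multiplicity $\ge 2$, is neither proved nor the right mechanism; along the chain of $w$-morphisms each step contributes $\frac{1}{r}$ to the discrepancy, so the total correction is $\frac{\delta(X)}{r}$, which is bounded by $1$ only once one already knows the center of $E$ stops following the chain after at most $r$ steps --- essentially the statement being proved.

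The paper's route is different. It first shows (Proposition \ref{esscar}) that every essential divisor must appear as a divisor on the Gorenstein resolution $Y$, by exhibiting, for each divisor $F$ over (but not on) $Y$, an explicit weighted blow-up $\bar{X}\rightarrow X$ with isolated singularities on which $Center_{\bar{X}}F$ is a curve. Then, for a divisor $E\subset Y$ with $a(X,E)=\frac{k}{r}>2$, Lemma \ref{es1} constructs the weighted blow-up with weight $\frac{1}{r}(\alpha,m_{k-r}r-\alpha,k-r,r)$, $\alpha<m_{k-r}r$, whose chart $U_u$ has isolated singularities and on which the center of $E$ is again a curve, so $E$ is not essential; the existence of such an $\alpha$ rests on the inequality $m_k<2m_{k-r}$, which is proved precisely from $k>2r$. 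This explicit ``make the center a curve'' construction, together with the numerical inequality tied to discrepancy greater than two, is the content your proposal is missing; discrepancy bookkeeping through a partial resolution does not substitute for it. (The Gorenstein non-$\Q$-factorial case, handled in the paper by Proposition \ref{nQ}, is also not addressed.)
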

Assume further that our singularity is $\Q$-factorial, then we can explicitly describe all the essential valuations.
\begin{thm}\label{gthm}
	Let \[X=(xy+f(z,u)=0)\subset\Cc^4\] be a $\Q$-factorial isolated $cA$ singularity.
	Let $m=\mul f(z,u)$. Then there exists a non-Nash essential valuation over $X$ if and only if
	after a suitable choice of local coordinates $(z,u)$ we have $z^iu^j\in f(z,u)$ only when $2i+j\geq 2m$.\par
	In this case there is a unique non-Nash essential valuation $\tau$ such that $\tau(x,y,z,u)=(m,m,2,1)$.
\end{thm}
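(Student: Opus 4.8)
The plan is to use the two preceding results to reduce the statement to discrepancy two, and then to separate an existence-plus-essentiality assertion from a uniqueness assertion. By Proposition~\ref{epro} every essential divisor over the (isolated) singular point has discrepancy at most two, while by Theorem~\ref{nthm} in the case $r=1$ the exceptional divisors of discrepancy at most one over the singular point are exactly the Nash divisors $\sigma_i$ with $1\leq i\leq m-1$. Hence a non-Nash essential divisor has discrepancy exactly two, and the theorem becomes equivalent to the two claims: (a) the weighted-blow-up valuation $\tau$ with $\tau(x,y,z,u)=(m,m,2,1)$ is a genuine divisorial valuation over $X$, of discrepancy two and essential, precisely when $z^iu^j\in f\Rightarrow 2i+j\geq 2m$ for a suitable choice of $(z,u)$; and (b) every discrepancy-two essential divisor over $X$ equals this $\tau$.

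For (a) I would start from a discrepancy count for the weighted blow-up of $\Cc^4$ with weights $(m,m,2,1)$ restricted to $X$, giving $a(\tau,X)=2m+2-\min\{2m,\w_{(2,1)}f\}$. Using $\mul f=m$ one checks the coordinate condition is equivalent to $\w_{(2,1)}f=2m$ (it forces $z^m\in f$, since a degree-$m$ monomial $z^iu^j$ with $2i+j\geq 2m$ must have $i=m$), and this is exactly when the $(m,m,2,1)$-weighted blow-up of $X$ has irreducible exceptional divisor: in the chart $u\neq 0$ the proper transform of $X$ is $x'y'+g(z')+u(\cdots)=0$ with $g$ a nonzero polynomial built from the $(2,1)$-leading form of $f$, so the exceptional divisor is the irreducible surface $\{x'y'+g(z')=0\}$ and $a(\tau,X)=2$. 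For essentiality I would take an arbitrary resolution $\pi\colon Y\to X$ and assume $c_Y(\tau)$ has codimension $\geq 2$; purity of the exceptional locus on the smooth $Y$ puts $c_Y(\tau)$ inside some $\pi$-exceptional prime $E$, and since $X$ is Gorenstein terminal $a(E,X)\geq 1$, so following the blow-ups that extract $\tau$ one gets $a(\tau,X)\geq 2$ with equality only in the borderline situation where $c_Y(\tau)$ is a \emph{curve} lying on a discrepancy-one divisor $E$ and $\tau$ is the blow-up of that curve. Excluding this borderline situation is where the hypothesis on $f$ is used: if $E$ has center the origin it is one of the $\sigma_i$, and on the explicit model of $\sigma_i$ one checks that a curve whose blow-up carries the numerical data of $\tau$ would force a monomial $z^iu^j\in f$ with $2i+j<2m$.

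For (b), let $v$ be a non-Nash essential divisor, necessarily of discrepancy two. I would first reduce $v$ to a weighted-blow-up valuation $v(x,y,z,u)=(a,b,c,d)$ in coordinates adapted to the $cA$ equation, using $\Q$-factoriality together with the structure of low-discrepancy divisorial extractions over $cA$ points. Irreducibility of the exceptional divisor of the associated weighted blow-up of $X$ eliminates the degenerate weight patterns and ties $a+b$ to $\w_{(c,d)}f$; combined with the discrepancy relation $a+b+c+d-1-\min\{a+b,\w_{(c,d)}f\}=2$ this leaves only the normalizations $(c,d)=(2,1)$ and $(c,d)=(1,2)$. The patterns with $(c,d)=(1,2)$, and the patterns with $(c,d)=(2,1)$ but $a\neq b$, I expect to be excluded by showing the valuation is not essential — producing a resolution on which its center has codimension $\geq 2$ — and here the symmetry of $X$ under $x\leftrightarrow y$ (which permutes essential divisors, so an asymmetric non-Nash essential divisor would come in a pair, contradicting the uniqueness one is trying to prove) gives an additional handle. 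What remains is $v=\tau$ with $(a,b,c,d)=(m,m,2,1)$, and the chart computation of (a) read in reverse then forces $\w_{(2,1)}f=2m$, i.e.\ the stated condition.

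The main obstacle in both directions is the essentiality bookkeeping, since "appears on every resolution" is not a priori checkable on a single resolution. I would organize it around one explicit partial resolution $Y_1\to X$ extracting precisely the discrepancy-one divisors $\sigma_i$; its singularities are canonical (compound Du Val along curves together with cyclic quotient points), and the problem becomes an essentiality question over $Y_1$ concentrated along a single curve $C\subset Y_1$ lying under the would-be $\tau$. The decisive step is to show that, when the coordinate condition holds, $C$ cannot be resolved without extracting $\tau$ as a divisor — equivalently that $\tau$ is not contractible relative to any resolution — while if the condition fails then $C$ (and every other discrepancy-two candidate center) can be resolved keeping its transform of codimension $\geq 2$; this dichotomy is exactly where the hypothesis on the monomials of $f$ enters, and is the technical heart of the proof.
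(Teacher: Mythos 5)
Your proposal has the right overall shape (reduce to discrepancy two, identify $(m,m,2,1)$ as the only candidate, prove it essential by a purity argument), but several of its load-bearing steps are either circular or unjustified. First, you open by invoking Proposition \ref{epro}; in the paper that proposition, in the case $r=1$, is itself \emph{deduced from} the statement you are proving (its proof reads ``Proposition \ref{gEss} and Proposition \ref{nQ} implies the statement''). The bound ``essential $\Rightarrow$ discrepancy $\leq 2$'' for an \emph{arbitrary} exceptional divisor is not an independent input: Lemma \ref{es1} only gives it for divisors appearing on the Gorenstein resolution, and for general divisors it only comes out of the same induction that yields the classification. Second, in part (b) the reduction of an arbitrary non-Nash essential valuation to a monomial weighted-blow-up valuation $(a,b,c,d)$ is simply asserted; there is no structure theorem for discrepancy-two valuations over $cA$ points that delivers this, and this reduction is precisely the technical heart of the problem. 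The paper replaces it by an induction on the pair $(m,\bar\delta(X))$: one follows $\mathrm{Center}_{X_1}E$ under the $(m-1,1,1,1)$ weighted blow-up, shows it must be a $cA$ point $P'$ with strictly smaller invariant (the cyclic-quotient point is excluded because divisors over it lie on the Gorenstein resolution, hence are Nash), applies the inductive hypothesis at $P'$, and translates back through the cases $a(X_1,E)=1$ and $a(X_1,E)=2$ with $v_E(u)\in\{1,2\}$. Your enumeration of weight patterns is also incomplete even granting monomiality: e.g.\ $(a,b,1,1)$ with $a+b=m+1$ gives discrepancy two and must be ruled out. Finally, the $x\leftrightarrow y$ symmetry argument is, as you yourself note, circular — it assumes the uniqueness being proved.

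The essentiality direction also needs a sharper mechanism than the one you sketch. Your inequality analysis correctly isolates the borderline case ($\Gamma$ a curve of codimension two lying on a discrepancy-one divisor $F$ with $\mathrm{mult}_\Gamma F=1$), but excluding it is not done by hunting for a monomial $z^iu^j$ with $2i+j<2m$. The paper's Lemma \ref{es2} excludes it as follows: $F$ must appear on the Gorenstein resolution, so $v_F(x,y,z,u)=(i,m-i,1,1)$; perform the weighted blow-up $X'\to X$ with weight $v_F$, check the induced rational map $Z\dashrightarrow X'$ is defined at the generic point of $\Gamma$, and use $v_E(x)=v_E(y)=m\geq\max(i,m-i)$ together with $v_E(z)>v_F(z)$ to see that $\Gamma$ is contracted to the origin of the chart $U_u$ while $F$ maps onto the exceptional divisor; the fiber over that point then fails to be pure of codimension one, contradicting \cite[Lemma 17]{jk} since $X'$ inherits $\Q$-factoriality from $X$. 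The hypothesis $2i+j\geq 2m$ (equivalently $m_2=2m$) enters not at this step but in guaranteeing that $\tau$ has discrepancy exactly two and that $v_\tau(x),v_\tau(y)\geq m$, which is what makes the purity argument bite; the non-Nash claim is then a separate arc-theoretic computation (Lemma \ref{subn}), which your proposal does not address at all.
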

\begin{thm}\label{ethm}
	Let \[X=(xy-f(z,u)=0)\subset\Cc^4_{(x,y,z,u)}/\frac{1}{r}(a,-a,1,0)\] be a $\Q$-factorial $cA/r$ singularity with $r>1$. 
	Let $w_k$ be the weight such that $w_k(z,u)=(\frac{k}{r},1)$ and let
	$m_k=\w_{w_k}f(z,u)$. Non-Nash essential valuations are those valuations: (in the following notation we assume that $a<r$)
	\[\se{\tau^k_i}{\tau^k_i(x,y,z,u)=\frac{1}{r}((k-r)a+ir,(m_k-i)r-(k-r)a,k,r)},\]
	where $r+1\leq k\leq 2r$ and $m_{k-r}-\rd{\frac{(k-r)a}{r}}\leq i\leq m_k-m_{k-r}-\ru{\frac{(k-r)a}{r}}$.\par
	In particular, the Nash map of $X$ is surjective if and only if $m_k<2m_{k-r}+(1-\delta_{k,2r})$
	for all $r+1\leq k\leq 2r$, where $\delta_{k,2r}=1$ if $k=2r$ and $0$ otherwise.
\end{thm}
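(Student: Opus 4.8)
The plan is to use Theorem~\ref{nthm} and Proposition~\ref{epro} to reduce the problem to a finite, explicit list of candidate valuations, and then to decide essentiality candidate by candidate via weighted blow-ups. After replacing $a$ by its residue modulo $r$ we may assume $1\le a<r$; recall also $\gcd(a,r)=1$. By Theorem~\ref{nthm} the Nash valuations of $X$ are exactly the divisors over the singular point of discrepancy at most $1$, and by Proposition~\ref{epro} every essential divisor has discrepancy at most $2$; hence every non-Nash essential valuation has discrepancy in $(1,2]$. As in the analysis underlying Proposition~\ref{epro}, one shows that, after a suitable choice of coordinates $(z,u)$, every divisorial valuation over the origin of discrepancy at most $2$ is a monomial valuation $v$ with $v(z)=k/r$, $v(u)=1$ for some integer $k\ge1$, and $v(x)+v(y)=\w_{w_k}f=m_k$; an adjunction computation in the ambient toric variety $\Cc^4/\frac1r(a,-a,1,0)$ — whose canonical class is minus the sum of the torus-invariant divisors and in which $X$ is Cartier — then gives $a_{E_v}(X)=(v(x)+v(y)+v(z)+v(u)-1)-v(xy-f)=k/r$. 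Thus a non-Nash essential valuation has $r+1\le k\le 2r$, and writing $v(x)=\tfrac1r\big((k-r)a+ir\big)$ it is one of the $\tau^k_i$; it remains to determine, for each such $k$, exactly which $i$ occur.

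I would first treat the $\tau^k_i$ with $i$ outside the interval $I_k:=\big[\,m_{k-r}-\rd{(k-r)a/r},\ m_k-m_{k-r}-\ru{(k-r)a/r}\,\big]$, showing that these are \emph{not} essential by exhibiting, for each such $i$, a resolution of $X$ on which the center of $\tau^k_i$ has codimension at least $2$. Since the Nash divisors $\sigma^{k-r}_j$ are essential, any resolution factors through their weighted blow-ups; carrying these out produces a partial resolution along which the geometry of $X$ in the $(x,y)$-direction decouples. One then checks, by a lattice-point computation in $\Z^4+\Z\cdot\frac1r(a,-a,1,0)$, that when $i<\min I_k$ or $i>\max I_k$ the weight $v^k_i=\tfrac1r\big((k-r)a+ir,\ (m_k-i)r-(k-r)a,\ k,\ r\big)$ lies in the relative interior of the cone spanned by the weights of the $\sigma^{k-r}_j$ together with those of the two neighbours $v^k_{i-1}$ and $v^k_{i+1}$; that cone can then be subdivided, and $X$ resolved, without ever introducing the ray $v^k_i$. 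The crux here is the verification that $m_{k-r}-\rd{(k-r)a/r}$ and $m_k-m_{k-r}-\ru{(k-r)a/r}$ are exactly the values of $i$ at which $v^k_i$ leaves this interior.

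Conversely, for $i\in I_k$ I would show that $\tau^k_i$ appears on every resolution. One first exhibits it on an explicit resolution built from the weighted blow-ups above. Then, given an arbitrary resolution $Y\to X$, one factors $Y$ through the blow-ups of the Nash divisors of level $k-r$ and argues that the center of $\tau^k_i$ on $Y$ cannot be pushed down to codimension at least $2$: otherwise the strict transforms on $Y$ of the two neighbouring essential divisors would be forced to meet in a way incompatible with the restriction of $xy=f(z,u)$ to the facet of the cone spanned by the $\sigma^{k-r}_j$. This rigidity argument is the main obstacle, and it is where the $\Q$-factoriality hypothesis enters (it is the higher-index counterpart of the mechanism producing the single extra valuation $\tau$ of Theorem~\ref{gthm}; indeed, when $r=1$ one has $I_2=\{m_1\}$ precisely when $f$ satisfies the condition stated there).

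Granting the above, the surjectivity criterion is immediate. The Nash map of $X$ is surjective if and only if $X$ has no non-Nash essential valuation, i.e.\ if and only if $I_k=\varnothing$ for all $k$ with $r+1\le k\le 2r$. Now $I_k=\varnothing$ means $m_{k-r}-\rd{(k-r)a/r}>m_k-m_{k-r}-\ru{(k-r)a/r}$, equivalently
\[
m_k<2m_{k-r}+\Big(\ru{(k-r)a/r}-\rd{(k-r)a/r}\Big).
\]
Since $\gcd(a,r)=1$ and $1\le k-r\le r$, the number $(k-r)a/r$ is an integer precisely when $k-r=r$, that is $k=2r$; hence $\ru{(k-r)a/r}-\rd{(k-r)a/r}=1-\delta_{k,2r}$, and the condition reads $m_k<2m_{k-r}+(1-\delta_{k,2r})$ for all $r+1\le k\le 2r$, as claimed.
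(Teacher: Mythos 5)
Your reduction of the problem to the finite list of candidates $\tau^k_i$ and your closing derivation of the surjectivity criterion are sound: the interval endpoints do come from the two inequalities $v_E(x)\geq m_{k-r}$ and $v_E(y)\geq m_{k-r}$, and $\ru{(k-r)a/r}-\rd{(k-r)a/r}=1-\delta_{k,2r}$ because $\gcd(a,r)=1$. However, the two steps that actually decide which candidates are essential are either missing or would not work as stated. First, your claim that after a coordinate change every divisorial valuation over the origin of discrepancy at most $2$ is monomial is not justified and is not the correct reduction: divisors exceptional over the Gorenstein resolution $Y$ need not be monomial in any coordinates, and the actual mechanism (Proposition \ref{esscar}) is to show that no divisor exceptional over $Y$ is essential, by producing for each such $F$ an explicit weighted blow-up $\bar{X}\rightarrow X$ with isolated singularities on which the center of $F$ is a curve; this requires a case analysis on $a(X_1,F)\in\{1,2\}$ and on the low-degree part of the local equation, and it is a substantial piece of the proof rather than a remark.

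Second, and more seriously, you leave the essentiality of $\tau^k_i$ for $i\in I_k$ as an acknowledged ``main obstacle,'' but this is the heart of the theorem and exactly where $\Q$-factoriality enters (Lemma \ref{es2}). The argument is not a cone-rigidity statement: one supposes a smooth model $Z\rightarrow X$ on which the center $\Gamma$ of $E=\tau^k_i$ has codimension at least two, uses $\Q$-factoriality to know $exc(Z\rightarrow X)$ is pure of codimension one, combines the discrepancy inequality $a(X,E)\geq codim_Z(\Gamma)-1+a(X,F)\,\mbox{mult}_{\Gamma}F$ with the arc-theoretic integrality $a(X,E)-a(X,F)\in\Z$ (Lemma \ref{disz}) to force $\Gamma$ to be a curve lying on a divisor $F$ with $a(X,F)=a(X,E)-1\leq 1$, and then derives a contradiction with the $\Q$-factoriality of the weighted blow-up $X'$ of weight $v_F$ by showing that $Z\dashrightarrow X'$ contracts $\Gamma$ to a point while mapping $F$ onto a divisor. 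None of this is supplied by your sketch. Likewise, in the non-essential range the working argument is the explicit weighted blow-up of weight $\frac{1}{r}(\alpha,m_{k-r}r-\alpha,k-r,r)$ (Lemma \ref{es1}), on which the center of $\tau^k_i$ becomes a curve; your proposal to subdivide a cone avoiding the ray $v^k_i$ does not directly apply, since $X$ is only a hypersurface in a toric variety and membership of a weight in the interior of a cone spanned by other weights does not by itself control which divisors must appear on resolutions of $X$.
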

Assume that the singularity is not $\Q$-factorial, then it is not easy to study essential valuations. We can only deal with Gorenstein cases:
\begin{thm}\label{noethm}
	Let \[X=(xy+f(z,u)=0)\subset\Cc^4\] be a three-dimensional isolated $cA$ singularity and assume that $X$ is not $\Q$-factorial.
	Then the Nash map of $X$ is surjective.
\end{thm}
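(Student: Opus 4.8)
The strategy is to split $f$ and pass to small crepant models, on which Theorem~\ref{gthm} (or, by induction, the present theorem) applies. A $cA$ singularity $xy+f=0$ fails to be $\Q$-factorial exactly when $f$ is not a unit times a power of an irreducible, and since $X$ is moreover isolated, $f$ is reduced; so write $f=g\cdot h$ with $g,h$ non-units, $\gcd(g,h)=1$, each having strictly fewer irreducible factors than $f$. Blowing up the Weil divisor $\{x=g=0\}$ gives a projective small (hence crepant) birational morphism $\pi^{+}\colon Y^{+}\to X$, and $\{x=h=0\}$ gives the flopped model $\pi^{-}\colon Y^{-}\to X$; each $Y^{\pm}$ is covered by two affine charts, and its singular locus is a finite set of isolated $cA$ germs whose $f$-part is, up to sign and coordinates, $g$ or $h$. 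We induct on the number of irreducible factors of $f$; the base case $f=g_1g_2$ uses only Theorem~\ref{gthm}. Let $E$ be essential over $X$. By Proposition~\ref{epro}, $a_{X}(E)\le 2$; if $a_X(E)\le 1$ then $E$ is Nash by Theorem~\ref{nthm}, so assume $a_{X}(E)=2$ and seek a contradiction; crepancy gives $a_{Y^{\pm}}(E)=2$.

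The crux is to locate $c_{Y^{+}}(E)$. Put $C^{+}=(\pi^{+})^{-1}(\text{Sing}\,X)$, a curve. On a resolution $q\colon W\to Y^{+}$, essentiality of $E$ over $X$ gives that $c_{W}(E)$ is a component of $\pi_{W}^{-1}(\text{Sing}\,X)=q^{-1}(C^{+})$, hence $c_{Y^{+}}(E)\subseteq C^{+}$. It is not a whole component of $C^{+}$: over the smooth locus of $Y^{+}$ such a component is a smooth curve, and blowing it up would, by essentiality, identify $E$ with the exceptional divisor, which has discrepancy $1$. So $c_{Y^{+}}(E)$ is a closed point, and it is a singular point $p^{+}$ of $Y^{+}$ (over a point where $\pi^{+}$ is an isomorphism, $c_{W}(E)$ would be a point, not a component of the curve $q^{-1}(C^{+})$). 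Finally $E$ is essential over the germ $(Y^{+},p^{+})$: near $p^{+}$, the components of $q^{-1}(C^{+})$ are precisely those of $q^{-1}(p^{+})$ together with the strict transform of $C^{+}$ (a curve meeting $q^{-1}(p^{+})$ only in a point), so a resolution of $(Y^{+},p^{+})$ exhibiting inessentiality there would, globalised, contradict essentiality over $X$. The same discussion applied to $\pi^{-}$ yields a singular point $p^{-}$ of $Y^{-}$ with $c_{Y^{-}}(E)=p^{-}$ and $E$ essential over $(Y^{-},p^{-})$; by crepancy and locality $a_{(Y^{\pm},p^{\pm})}(E)=2$.

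Now conclude. If the $f$-part of $(Y^{+},p^{+})$ has at least two irreducible factors, the inductive hypothesis forces $a_{(Y^{+},p^{+})}(E)\le 1$, a contradiction; otherwise $(Y^{+},p^{+})$ is $\Q$-factorial, and Theorem~\ref{gthm} forces its $f$-part to satisfy the monomial condition and $E=\tau^{+}$, the unique essential valuation of discrepancy $2$ there, given in suitable local coordinates at $p^{+}$ by weights $(m^{+},m^{+},2,1)$, where $m^{+}$ is the multiplicity of that $f$-part. The same dichotomy at $p^{-}$ gives either a contradiction or $E=\tau^{-}$. It remains to see that $\tau^{+}\neq\tau^{-}$ as valuations of $\Cc(X)$. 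Rewriting weights through the substitutions $t=g/x$, $s=x/h,\dots$: when $p^{+}$ and $p^{-}$ have the same $f$-part one gets $\tau^{+}(x)=m^{+}$ but $\tau^{-}(x)=m^{+}+\w(h')>m^{+}$ for the complementary non-unit $h'$; when they have different $f$-parts, the adapted coordinates lie along different tangent lines of $V(f)$, inducing different monomial valuations on $\Cc[[z,u]]$. Either way $E=\tau^{+}=\tau^{-}$ is impossible, so $a_{X}(E)\le 1$ and every essential valuation of $X$ is Nash.

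I expect the delicate step to be the second paragraph: relating essentiality over $X$ to essentiality over the singular germs of $Y^{\pm}$, especially the point that the curve-like part of $(\pi^{\pm})^{-1}(\text{Sing}\,X)$ arising from the smooth locus of the exceptional set can never be the center of a divisor that is already inessential over that germ. Granting this, the final contradiction is just the short weight computation encoding that the flop relating $Y^{+}$ and $Y^{-}$ interchanges the roles of $x$ and $y$.
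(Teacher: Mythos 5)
Your overall architecture parallels the paper's: pass to small models, reduce to essentiality over their singular germs, and invoke Theorem \ref{gthm} at the $\Q$-factorial ones (the paper uses the chain of $\Q$-factorializations obtained by blowing up the ideals $(x,f_i)$ rather than a flop pair, but the localization step in your second paragraph is sound and matches what the paper does implicitly). The gap is in the endgame: the claim that $\tau^{+}\neq\tau^{-}$ when $p^{+}$ and $p^{-}$ have different $f$-parts is false. Theorem \ref{gthm} only constrains the multiplicity and the tangent direction of each factor, and nothing prevents $g$ and $h$ from sharing both. Take $f=(z^2+u^5)(z^2+2u^5)$. Both factors are irreducible in $\Cc[[z,u]]$, have multiplicity $2$, tangent cone $z^2$, and satisfy the condition $2i+j\geq 4$ of Theorem \ref{gthm}. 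The unique non-Nash essential valuation of the germ $(xs=g)$ on $Y^{+}$ is $(x,s,z,u)\mapsto(2,2,2,1)$, i.e.\ $(x,y,z,u)\mapsto(2,6,2,1)$ since $y=-sh$ and $\w_{(2,1)}h=4$; the unique one of the germ $(xs'=h)$ on $Y^{-}$ is $(x,s',z,u)\mapsto(2,2,2,1)$, i.e.\ again $(x,y,z,u)\mapsto(2,6,2,1)$. Both coincide with the valuation of the irreducible exceptional divisor of the $(2,6,2,1)$-weighted blow-up of $X$, so $\tau^{+}=\tau^{-}$ and your argument yields no contradiction, even though this valuation is in fact not essential over $X$. (One checks its center really is the relevant singular point on each small model, since $v(x)=2<4=v(g)=v(h)$ excludes the opposite charts.)

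This shows that no comparison of the purely local data at the singular germs of the small models can finish the proof; a global model of $X$ on which such a valuation visibly fails to be essential is needed. That is exactly what the paper's Lemma \ref{es1} supplies: every candidate produced by your dichotomy has $v_E(z,u)=(2,1)$ and $v_E(x)=\mul g<\mul g+\mul h=m$, so the weighted blow-up of $X$ itself with the asymmetric weight $(v_E(x),m-v_E(x),1,1)$ has only isolated singularities on the chart $U_u$ and the center of $E$ there is a curve, whence $E$ is not essential. In the example above this is the weight $(2,2,1,1)$, whose $U_u$-chart is $(x'y'+(z'^2+u^3)(z'^2+2u^3)=0)$ and on which the center of $(2,6,2,1)$ is the $x'$-axis. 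Replacing your final paragraph by this weighted blow-up (and noting separately, as the paper does, that an essential divisor centered along a flopping curve must be its blow-up and hence has discrepancy one) closes the argument.
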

We can construct an example which is not $\Q$-factorial and not Gorenstein, and has a non-Nash essential valuation. However
we do not have a general theory to describe the essential valuations of non-$\Q$-factorial and non-Gorenstein terminal threefolds.\par
Our result generalize Johnson-Koll\'{a}r's work in three-dimensional case and the basic idea of the proof is similar. 
The reason we only focus on dimension three cases is that one can construct an explicit resolution of a terminal threefold
using weighted blow-ups. Thus all the candidates of Nash valuations and essential valuations can be well-described. One can test whether a
valuation is Nash or not using Reguera's curve selection lemma, and test whether a valuation is essential or not using de Fernex's method.\par
In fact, usually we do not need to study every exceptional divisor on the resolution.
It is enough to study exceptional divisors on a intermediate variety which has only Gorenstein singularities.
In Section \ref{sGor} we will compute those divisorial valuations.
Section \ref{sNash} contains the main technical ingredients. We discuss the deformation of arcs on three-dimensional terminal
$cA/r$ singularities. It help us to identify Nash valuations. In Section \ref{sEss} we discuss essential valuations and we will prove
all the above theorems in Section \ref{sThm}. Counter examples of the Nash problem will also be given in the last section.\par
I want to thanks Tommaso de Fernex for discussing this question with me. I thank Jungkai Alfred Chen for his helpful comments.
Part of work was done while the author was visiting the University of Utah. The author would like to thank the University of Utah for its hospitality.
\section{Preliminary}
\subsection{Arc spaces}\label{sarc}
Let $X$ be a scheme of finite type of a field $k$. The space of arc (or the arc space) of $X$, which we will denoted by $Arc(X)$,
is a scheme satisfied the following property: for any field extension $K/k$, the $K$-valued of $Arc(X)$ is a formal arc
\[\alpha: \spc K[[t]]\rightarrow X.\] For the construction and basic properties of the arc spaces, we refer to \cite{dF2}.
We have the natural map $\pi_X:Arc(X)\rightarrow X$ which is defined by $\pi_X(\alpha)=\alpha(0)$.
If there is a morphism $f:Y\rightarrow X$, then we have a induced morphism $\pi_f:Arc(Y)\rightarrow Arc(X)$ defined by composition with $f$.\par
For a given arc $\alpha\in Arc(X)$ there is an induced morphism $\alpha\st:\Oo_X\rightarrow K[[t]]$.
Assume that \[X\cong(f_1(x_1,...,x_n)=...=f_d(x_1,...,x_n)=0)\subset\Cc^n_{(x_1,...,x_n)}.\] is an complex variety.
Every arc $\alpha\in Arc(X)$ can be express as $\alpha(t)=(x_1(t),...,x_n(t))$, where $x_i(t)=\alpha\st(x_i)\in\Cc[[t]]$, such that
$f_j(x_1(t),...,x_n(t))\equiv0$ in $\Cc[[t]]$, for all $j=1$, ..., $d$.\par
Let $\alpha\in Arc(X)$ be a arc. Then $\alpha$ induces a valuation
\[ \xymatrix{v_{\alpha}:\Oo_{X,p}\ar[r]^-{\alpha\st} & \Cc[[t]] \ar[r]^{val_t} & \Z_{\geq0}}.\]
Assume that $S\subset Arc(X)$ is a connected subset in the arc space. One can define $v_S=\min_{\alpha\in S}\{v_\alpha\}$. Since the
valuation function on the arc space is upper semi-continuous, $v_S$ is well-defined and equals to the valuation of a general element in $S$.
\begin{defn}
	Assume that $\pi_X^{-1}(X_{sing})=\bigcup_{i\in I} Z_i$, where $Z_i$ is irreducible.
	We call $\{v_{Z_i}\}_{i\in I}$ \emph{Nash valuations} of $X$.
\end{defn}
Assume that $Y\rightarrow X$ is a resolution of singularities and $\{E_j\}_{j\in J}$ are exceptional divisors of $Y\rightarrow X$.
It is known (cf. \cite[Section 3]{dF2}) that
\[\pi_f|_{\pi_Y^{-1}(\bigcup_{j\in J}E_j)}:\pi_Y^{-1}(\bigcup_{j\in J}E_j)\rightarrow \pi_X^{-1}(X_{sing})\]
is dominate. Hence for any irreducible component $Z_i$ of $\pi_X^{-1}(X_{sing})$ there exists an unique $E_j$
(uniqueness follows form the fact that $v_{E_j}\neq v_{E_{j'}}$ if $j\neq j'$) such that $\pi_f(\pi_Y^{-1}(E_j))$ dominate $Z_i$ which implies
$v_{Z_i}=v_{E_j}$. Thus Nash valuations can be viewed as a divisorial valuation which appears on every resolution of singularities of $X$.
\begin{defn}
	Let $E$ be an exceptional divisor over $X_{sing}$. $E$ is called an essential divisor if $center_YE$ is an irreducible component of
	$f^{-1}(X_{sing})$ for every resolution of singularities $f:Y\rightarrow X$. The valuation $v_E$ is called an essential valuation.
\end{defn}
The above argument yields a natural map from the set of Nash valuations to the set of essential valuations. This map is called the
\emph{Nash map}. It is obvious that the Nash map is injective (because $\pi_Y^{-1}(E_j)$ is irreducible since $\pi_Y:Arc(Y)\rightarrow Y$ is
an infinite-dimensional affine fiber bundle when $Y$ is smooth). The Nash problem asks whether the Nash map is bijective or not.
As we introduced in the first section, in some situation the Nash problem is known to be have positive answer, but in general the Nash map
is not surjective.\par
To test a divisorial valuation is a Nash valuation or not, one needs Reguera's curve selection lemma, written in the following form.
\begin{lem}[Curve selection lemma, \cite{dF2} Theorem 3.10]
	Notation as above. Assume that $\ol{\pi_f(\pi_Y^{-1}(E_{j'}))}\subsetneq \ol{\pi_f(\pi_Y^{-1}(E_j))}$,
	here the overline denotes the closure in $Arc(X)$. Then there exists a field extension
	$K/\Cc$ and a deformation of arcs $\Phi:\spc K[[s]]\rightarrow Arc(X)$ such that $\Phi(0)$ is the generic point of $\pi_f(\pi_Y^{-1}(E_{j'}))$
	and $\Phi(\eta)$ belongs to $\pi_f(\pi_Y^{-1}(E_j))/\pi_f(\pi_Y^{-1}(E_{j'}))$. Here $\eta$ denotes the generic point of $\spc K[[s]]$.
\end{lem}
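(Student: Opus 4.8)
The statement is Reguera's curve selection lemma in the arc space, so the plan is to reduce it to a Noetherianity property of $Arc(X)$ at the relevant point and then run the classical algebraic curve selection lemma. Write $N=\pi_f(\pi_Y^{-1}(E_j))$ and $N'=\pi_f(\pi_Y^{-1}(E_{j'}))$; both are irreducible constructible subsets of $Arc(X)$ (the maximal divisorial sets attached to the divisorial valuations $v_{E_j}$ and $v_{E_{j'}}$), with generic points $\beta$ and $\alpha$ respectively. The hypothesis $\ol{N'}\subsetneq\ol{N}$ says exactly that $\alpha\in\ol{N}$, while $\beta\notin\ol{N'}$ (otherwise $\ol{N}=\ol{\{\beta\}}\subseteq\ol{N'}$); in particular $\alpha\neq\beta$, consistent with $v_{E_j}\neq v_{E_{j'}}$. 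Set $A=\Oo_{Arc(X),\alpha}$. Then $\beta$ corresponds to a prime ideal $\mathfrak p\subsetneq\mathfrak m_A$, and to construct the deformation $\Phi$ it suffices to construct a field extension $K/\Cc$ and a local homomorphism $A\to K[[s]]$ whose kernel is exactly $\mathfrak p$: the induced morphism $\spc K[[s]]\to\spc A\to Arc(X)$ then sends $0$ to $\alpha$ and $\eta$ to $\beta$, and $\beta\in N\setminus N'$ because $\beta$ is the generic point of $N$ and does not lie in the proper closed subset $\ol{N'}\supseteq N'$.

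The key point is that such a homomorphism exists because the completion $\hat A$ of $A$ is a Noetherian local ring. This is where one uses that $\alpha$ is a \emph{stable point} of $Arc(X)$: because $E_{j'}$ is a divisor over the singular locus, the maximal divisorial set $N'$ is not contained in the arc space of $X_{sing}$, so the arc $\alpha$ is non-degenerate, and Reguera's theorem (see \cite[Section 3]{dF2}) applies. A concrete way to see the Noetherianity is the Grinberg--Kazhdan--Drinfeld factorization $\hat A\cong\widehat{\Oo_{S,s}}\,\hat\otimes_{\Cc}\,\Cc[[x_1,x_2,\dots]]$ for a finite type $\Cc$-scheme $(S,s)$, which isolates the finite-dimensional ``essential'' factor $\widehat{\Oo_{S,s}}$ as an excellent Noetherian local ring and reduces the construction below to it.

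Granting Noetherianity of $\hat A$, one finishes by a standard construction of a dominating discrete valuation ring. Since $A/\mathfrak p$ is a domain, $\widehat{A/\mathfrak p}=\hat A/\mathfrak p\hat A$ has a minimal prime $\mathfrak q$ contracting to $(0)$ in $A/\mathfrak p$; let $C$ be the complete Noetherian local domain $\hat A/(\mathfrak p\hat A+\mathfrak q)$, so that the composite $A\to C$ has kernel $\mathfrak p$ and $A/\mathfrak p\hookrightarrow C$. Choose a discrete valuation ring $V$ with fraction field $\mathrm{Frac}\,C$ dominating $C$ — for instance, normalize the blow-up of the maximal ideal of $C$ and localize at a codimension-one point of the exceptional divisor. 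Then $C\hookrightarrow V$ is a local injection, and by Cohen's structure theorem the completion of $V$ has the form $K[[s]]$ with $K$ its residue field. The composite $A\to\hat A\to C\hookrightarrow V\hookrightarrow K[[s]]$ is a local homomorphism with kernel exactly $\mathfrak p$, which yields the required $\Phi$; chasing the closed and generic points of $\spc K[[s]]$ through $\spc A\to Arc(X)$ gives $\Phi(0)=\alpha$ and $\Phi(\eta)=\beta\in N\setminus N'$.

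The one genuinely hard input is the middle step: the local ring of the arc space at the stable point $\alpha$ has Noetherian completion. This is Reguera's theorem (with an alternative route through Grinberg--Kazhdan and the work of de Fernex--Docampo); once it is in hand, the reduction in the first paragraph and the discrete-valuation-ring construction in the third are formal. One should also take care, in the reduction, to verify that the generic point of the maximal divisorial set of $v_{E_{j'}}$ really is non-degenerate — this is precisely where it matters that $E_{j'}$ lies over $X_{sing}$ rather than over an arbitrary subvariety.
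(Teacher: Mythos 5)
The paper does not actually prove this lemma: it is quoted verbatim as Reguera's curve selection lemma, with the proof outsourced to \cite[Theorem 3.10]{dF2}; the only thing proved in the paper is the subsequent Corollary \ref{csel}. Your proposal reconstructs the standard argument behind that citation, and its outline is correct: pass to the generic points $\alpha$ of $\ol{\pi_f(\pi_Y^{-1}(E_{j'}))}$ and $\beta$ of $\ol{\pi_f(\pi_Y^{-1}(E_j))}$, observe that $\beta$ is a generization of $\alpha$ corresponding to a prime $\mathfrak p\subsetneq\mathfrak m_A$ in $A=\Oo_{Arc(X),\alpha}$, and produce a local map $A\to K[[s]]$ with kernel exactly $\mathfrak p$ by dominating the Noetherian complete local domain $\hat A/\mathfrak q$ (with $\mathfrak q$ a minimal prime of $\mathfrak p\hat A$, lying over $\mathfrak p$ by flatness) with a DVR and completing via Cohen's theorem. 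So your route is the same as the literature's, and it honestly isolates the one nontrivial input, Reguera's theorem that $\widehat{\Oo}_{Arc(X),\alpha}$ is Noetherian at the stable point $\alpha$; but note that this input is essentially the content of the result being cited, so what you have is a correct reduction of the lemma to that theorem rather than an independent proof.

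One caution: your parenthetical claim that the Grinberg--Kazhdan--Drinfeld factorization gives ``a concrete way to see the Noetherianity'' is not right as stated. That decomposition is proved for non-degenerate \emph{rational} (closed) points of the arc space, not for the generic point $\alpha$ of a maximal divisorial set, and even where it applies the factor $\Cc[[x_1,x_2,\dots]]$ is not Noetherian, so the full completed local ring at such a point is not Noetherian either; Noetherianity is special to stable points and is genuinely Reguera's theorem. If you keep the GKD remark, you would need a separate argument that the curve-selection construction can be carried out on the finite-dimensional factor and then lifted, which is not automatic; it is cleaner to drop it and rely on the stable-point statement, checking (as you do) that $\alpha$ is non-degenerate because $E_{j'}$ is a divisor over $X_{sing}$ and a general arc in $\pi_f(\pi_Y^{-1}(E_{j'}))$ is not contained in the singular locus.
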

\begin{cor}\label{csel}
	Notation as above. Assume that the ideal defines $f(E_{j'})$ is generated by $x_1$, ..., $x_n$.
	There exists a $\Cc$-valued deformation of arcs $\Psi:\spc\Cc[[s]]\rightarrow Arc(X)$ such that
	$\Psi(s)\in \pi_f(\pi_Y^{-1}(E_j))/\pi_f(\pi_Y^{-1}(E_{j'}))$ and $v_{\Psi(0)}(x_i)=v_{E_{j'}}(x_i)$ for all $i$.
\end{cor}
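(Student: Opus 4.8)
The plan is to derive the $\Cc$-valued deformation from the $K$-valued deformation $\Phi$ supplied by the curve selection lemma, by a specialization argument that preserves the relevant valuation data. First I would apply the curve selection lemma to the pair $E_{j'}\subsetneq E_j$ (the hypothesis $\ol{\pi_f(\pi_Y^{-1}(E_{j'}))}\subsetneq\ol{\pi_f(\pi_Y^{-1}(E_j))}$ being exactly what makes one a Nash candidate strictly below the other) to obtain a field extension $K/\Cc$ and $\Phi:\spc K[[s]]\rightarrow Arc(X)$ with $\Phi(0)$ the generic point of $\pi_f(\pi_Y^{-1}(E_{j'}))$ and $\Phi(\eta)\in\pi_f(\pi_Y^{-1}(E_j))\setminus\pi_f(\pi_Y^{-1}(E_{j'}))$. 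The key numerical observation is that $v_{\Phi(0)}=v_{E_{j'}}$, so writing $\Phi(0)$ in coordinates as an arc over $\spc K[[t]]$, each coordinate function $x_i(t)$ has $t$-order exactly $v_{E_{j'}}(x_i)$; since the defining ideal of $f(E_{j'})$ is $(x_1,\dots,x_n)$, these orders are finite and this is precisely the data we wish to reproduce.

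Next I would express $\Phi$ concretely: it is given by $n$ power series $x_i(s,t)\in K[[s]][[t]]$ (or $K[[s,t]]$ after clearing), subject to the defining equations $f_j(x_1(s,t),\dots,x_n(s,t))\equiv 0$. Only finitely many coefficients in $K$ are needed to witness the two conditions "$\Phi(\eta)\notin\pi_f(\pi_Y^{-1}(E_{j'}))$" (an open condition, cut out by non-vanishing of some minor / some coefficient, detectable on a finite truncation of jets) and "$v_{\Phi(0)}(x_i)=v_{E_{j'}}(x_i)$" (vanishing of finitely many coefficients and non-vanishing of one further coefficient, again a constructible condition on a finite jet level). Hence these coefficients lie in a finitely generated $\Cc$-subalgebra $A\subset K$, and the locus in $\spc A$ where all the required conditions hold is a nonempty constructible set. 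Since $\Cc$ is algebraically closed and uncountable, this locus has a $\Cc$-point; specializing the coefficients of $\Phi$ along that point, and composing with a suitable $\Cc[[s]]$-point of the truncated family to avoid the finitely many bad coefficients, produces $\Psi:\spc\Cc[[s]]\rightarrow Arc(X)$ with $\Psi(s)\in\pi_f(\pi_Y^{-1}(E_j))\setminus\pi_f(\pi_Y^{-1}(E_{j'}))$ for general $s$ and $v_{\Psi(0)}(x_i)=v_{E_{j'}}(x_i)$ for all $i$.

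The main obstacle I expect is controlling that the specialization does not accidentally push $\Psi(0)$ strictly above $v_{E_{j'}}$ or push the generic member back into $\pi_f(\pi_Y^{-1}(E_{j'}))$: a priori both the membership condition and the exact-order condition are only constructible, not obviously open, so one must argue that their intersection remains nonempty after restricting to a $\Cc$-fiber. The clean way is to note that each condition becomes an honest open (or closed) condition once one passes to a high enough jet level $\gamma_m:Arc(X)\rightarrow J_m(X)$ — membership in $\pi_f(\pi_Y^{-1}(E_j))$ pulls back from a locally closed subset of $J_m(X)$ by \cite[Section 3]{dF2}, and "$\mathrm{ord}_t\,x_i=v_{E_{j'}}(x_i)$" is locally closed on $J_m(X)$ for $m>\max_i v_{E_{j'}}(x_i)$ — so the whole discussion descends to a nonempty constructible subset of a finite-type $\Cc$-scheme, which automatically has a $\Cc$-point. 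Once the $\Cc$-point is chosen, reading off the power series coordinates of the corresponding arc and its $s=0$ fiber gives $\Psi$ with the asserted properties; the remaining bookkeeping (that composing with $f$ still annihilates the $f_j$, which is automatic since the equations are defined over $\Cc$) is routine.
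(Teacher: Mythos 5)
Your overall strategy is the same as the paper's: start from the $K$-valued wedge $\Phi$ produced by the curve selection lemma and specialize it to a $\Cc$-valued wedge through a sufficiently general arc on $\pi_f(\pi_Y^{-1}(E_{j'}))$. The paper, however, does not prove the specialization step by hand; it invokes \cite[Lemmas 7.3 and 7.4]{dFd}, which say precisely that the wedge can be restricted to a very general $\Cc$-valued arc $\beta\in\pi_Y^{-1}(E_{j'})$ with $\beta(0)$ very general on $E_{j'}$ and $v_\beta(E_{j'})=1$, whence $v_{\Psi(0)}(x_i)=v_{E_{j'}}(x_i)$.

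Your replacement for that citation has a genuine gap. The wedge $\Phi$ is an infinite package of data: infinitely many coefficients in $K$ subject to infinitely many polynomial relations coming from $f_j(x_1(s,t),\dots,x_n(s,t))\equiv 0$. Choosing a $\Cc$-point of the finitely generated subalgebra $A\subset K$ that witnesses the finitely many ``good'' conditions does not let you ``specialize the coefficients of $\Phi$'': the remaining coefficients do not lie in $A$, and an arbitrary $\Cc$-point of $\spc A$ need not extend to a compatible $\Cc$-point of the full (infinitely generated) coefficient algebra, i.e.\ need not lie in the image of the inverse limit over all truncation levels. Making this work requires intersecting countably many dense open conditions over the uncountable field $\Cc$ and checking each truncated image is still dominant --- which is exactly the content of \cite[Lemma 7.4]{dFd}, so your argument is circular-by-omission at the one point that needs proof. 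A secondary unjustified claim is that membership in $\pi_f(\pi_Y^{-1}(E_j))$ is pulled back from a locally closed subset of a single jet scheme $J_m(X)$: this set is an image of a cylinder under $\pi_f$ and is in general only a countable union of finite-level conditions, so ``detectable on a finite truncation'' needs an argument (or should again be replaced by the cited lemmas). The endgame --- that $v_{\Phi(0)}=v_{E_{j'}}$ forces the stated orders, and that the defining equations survive specialization because they are defined over $\Cc$ --- is fine.
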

\begin{proof}
	By \cite[Lemma 7.4]{dFd}, one can choose a very general point $\Cc$-valued arc $\beta\in\pi_Y^{-1}(E_{j'})$, such that
	$\Phi$ can be restrict to $\pi_f(\beta)$. That is, there exists a $\Cc$-valued deformation of arcs $\Psi:\spc\Cc[[s]]\rightarrow Arc(X)$
	such that we have a commute diagram
	\[\xymatrix{ \spc K[[s]] \ar[rr]^{\Phi} \ar[rd] & & Arc(X) \\
		& \spc \Oo_{\pi_f(\pi_Y^{-1}(E_{j'})),\kappa_{\pi_f(\beta)}} \ar[ru] & \spc\Cc[[s]]	\ar[u]_{\Psi} \ar[l] }.\]
	\cite[Lemma 7.3]{dFd} says that one may assume $\beta(0)$ is a very general point on $E_{j'}$ and $v_{\beta}(E_{j'})=1$. 
	Thus we may assume that $v_{\Psi(0)}(x_i)=v_{\pi_f(\beta)}(x_i)=v_{E_{j'}}(x_i)$ for all $i$.
\end{proof}
Given a deformation of arcs $\Phi:\spc K[[s]]\rightarrow Arc(X)$, we will denote $\Phi_0(t)$ as the arc corresponds to the closed point and
$\Phi_{\eta}(t)$ as the arc corresponds to the generic point.
Note that $\Phi$ can be realized as a morphism $\spc K[[s,t]]\rightarrow X$, so called
a $K$-valued \emph{wedge} of $X$. We will use this notation later.

\subsection{Weighted blow-ups}
Let $G=\ang{\tau\mbox{ }\vline\mbox{ }\tau^r=id}$ be a cyclic group of order $r$. For any $\Z$-valued $n$-tuple $(a_1,...,a_n)$
one can define a $G$-action on $\A^n_{(x_1,...,x_n)}$ by $\tau(x_i)=\xi^{a_i}x_i$, where $\xi=e^{\frac{2\pi i}{r}}$.
We will denote the quotient space $\A^n/G$ by $\A^n_{(x_1,...,x_n)}/\frac{1}{r}(a_1,...,a_n)$.\par
Let $W\cong \A^n_{(x_1,...,x_n)}/\frac{1}{r}(a_1,...,a_n)$ be a cyclic-quotient singularity.
There is an elementary way to construct a birational morphism $Y\rightarrow W$, so called the weighted blow-up, defined as follows.\par
We write everything in the language of toric varieties. Let $N$ be the lattice $\ang{e_1,...,e_n,v}_{\Z}$,
where $e_1$, ..., $e_n$ is the standard basic of $\R^n$ and $v=\frac{1}{r}(a_1,...,a_n)$. Let $\sigma=\ang{e_1,...,e_n}_{\R\gq0}$.
We have $W\cong \spc\Cc [N\du\cap \sigma\du]$.\par
Let $w=\frac{1}{r}(b_1,...,b_n)$ be a vector such that $b_i=\lambda a_i+k_ir$ for $\lambda\in \N$ and $k_i\in\Z$. We define a weighted blow-up
of $W$ with weight $w$ to be the toric variety defined by the fan consists of those cones
\[\sigma_i=\ang{e_1,...,e_{i-1},w,e_{i+1},...,e_n}.\]
Let $U_i$ be the toric variety defined by the cone $\sigma_i$ and lattice $N$.
\begin{lem}\label{wbup}
	Let	\[ v'=\frac{1}{b_i}(-b_1,...,-b_{i-1},r,-b_{i+1},...,-b_n)\] and
	\[w'=\frac{1}{rb_i}(a_1b_i-a_ib_1,...,a_{i-1}b_i-a_ib_{i-1},ra_i,a_{i+1}b_i-a_ib_{i+1},...,a_nb_i-a_ib_n). \]
	Assume that $u=\frac{1}{r'}(a'_1,...,a'_n)$ is a vector such that $\ang{e_1,...,e_n,v',w'}_{\Z}=\ang{e_1,..,e_n,u}_{\Z}$,
	then \[U_i\cong \A^n/\frac{1}{r'}(a'_1,...,a'_n).\]\par
	In particular, if $\lambda=1$, then $U_i\cong\frac{1}{b_i}(-b_1,...,-b_{i-1},r,-b_{i+1},...,-b_n)$.
\end{lem}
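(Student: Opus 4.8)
The plan is to identify each chart $U_i$ with a quotient of affine $n$-space by a finite abelian group via the standard presentation of a simplicial affine toric variety, and then to read off the group and its action through a single change of basis in $N_\R$. Concretely, I would use the following standard fact: if $\sigma\subset N_\R$ is a full-dimensional simplicial cone with primitive ray generators $\rho_1,\dots,\rho_n$ and $N':=\Z\rho_1+\cdots+\Z\rho_n\subseteq N$, then $U_{\sigma,N}\cong\A^n/(N/N')$, where $\A^n=U_{\sigma,N'}=\spc\Cc[y_1,\dots,y_n]$ with $y_1,\dots,y_n$ the coordinates dual to $\rho_1,\dots,\rho_n$, and where the class of $\mu\in N$, written $\mu=\sum_j c_j\rho_j$ with $c_j\in\Q$, acts by $y_j\mapsto e^{2\pi i c_j}y_j$. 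Applying this to $\sigma=\sigma_i$, whose primitive ray generators are $e_1,\dots,e_{i-1},w,e_{i+1},\dots,e_n$ (here $b_i\neq 0$ and $w$ is taken primitive, as in the usual weighted blow-up setup), gives $N':=N_{\sigma_i}=\Z e_1+\cdots+\Z e_{i-1}+\Z w+\Z e_{i+1}+\cdots+\Z e_n$ and $U_i\cong\A^n/(N/N_{\sigma_i})$.

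The main step is then a coordinate computation carrying everything into $\Q^n$ via the basis of $N_\R$ adapted to $\sigma_i$, namely $(e_1,\dots,e_{i-1},w,e_{i+1},\dots,e_n)$. Solving $w=\frac1r\sum_j b_j e_j$ for $e_i$ gives $e_i=\frac r{b_i}w-\sum_{j\neq i}\frac{b_j}{b_i}e_j$, so the coordinate vector of $e_i$ in the adapted basis is exactly $v'$; substituting this into $v=\frac1r\sum_j a_j e_j$ gives $v=\frac{a_i}{b_i}w+\sum_{j\neq i}\frac{a_j b_i-a_i b_j}{rb_i}e_j$, so the coordinate vector of $v$ is exactly $w'$. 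Under the induced identification $N_\R\cong\Q^n$ (sending the adapted basis to the standard one), $N_{\sigma_i}$ is carried to $\ang{e_1,\dots,e_n}_\Z$, while $N=N_{\sigma_i}+\Z e_i+\Z v$ (which holds since the right side contains every $e_j$, hence all of $\Z^n$, together with $v$, and is contained in $N$ because $w=\lambda v+\sum_j k_j e_j\in N$) is carried to $\ang{e_1,\dots,e_n,v',w'}_\Z$. Hence $N/N_{\sigma_i}\cong\ang{e_1,\dots,e_n,v',w'}_\Z/\ang{e_1,\dots,e_n}_\Z$.

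To finish, under the hypothesis $\ang{e_1,\dots,e_n,v',w'}_\Z=\ang{e_1,\dots,e_n,u}_\Z$ with $u=\frac1{r'}(a_1',\dots,a_n')$, this quotient group is cyclic of order $r'$ generated by the class of $u$, whose action on $y_j$ is multiplication by $e^{2\pi i a_j'/r'}$ — precisely the action defining $\A^n/\frac1{r'}(a_1',\dots,a_n')$ — so $U_i\cong\A^n/\frac1{r'}(a_1',\dots,a_n')$. For the final clause, when $\lambda=1$ we have $w=v+\sum_j k_j e_j$, hence $\ang{e_1,\dots,e_n,w}_\Z=\ang{e_1,\dots,e_n,v}_\Z=N$; thus already $N=N_{\sigma_i}+\Z e_i$, so $N/N_{\sigma_i}$ is cyclic generated by the class of $e_i$, i.e. of $v'$, and one reads off $U_i\cong\frac1{b_i}(-b_1,\dots,-b_{i-1},r,-b_{i+1},\dots,-b_n)$ directly.

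I do not expect a genuine obstacle here: the computation is elementary linear algebra. The only care needed is bookkeeping — keeping track that the coordinate $y_j$ on $\A^n$ is dual to the $j$-th ray generator of $\sigma_i$ (in particular $y_i$ is dual to $w$, not to $e_i$), so that the exponents in the group action appear with the signs recorded in $v'$ and $w'$; taking the ray generators primitive, which is what makes the stated formulas literally correct; and noting that $N/N_{\sigma_i}$ is a priori only a finite abelian group, which is precisely why the statement presumes the existence of a single vector $u$ generating the relevant quotient.
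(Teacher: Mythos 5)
Your proposal is correct and follows essentially the same route as the paper: both arguments perform the change of basis sending $w\mapsto e_i$ and fixing the other $e_j$, observe that $e_i$ and $v$ are carried to $v'$ and $w'$ respectively, and then identify $U_i$ as the quotient of $\A^n$ by the (cyclic) group $N/N_{\sigma_i}$ read off from the lattice $\ang{e_1,\dots,e_n,v',w'}_{\Z}$. The only cosmetic difference is that the paper verifies $e_i\in T_iN$ via the explicit relation $k_iv'+\lambda w'=e_i-\sum_{j\neq i}k_je_j$, whereas you argue $N=N_{\sigma_i}+\Z e_i+\Z v$ directly; these are the same computation.
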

\begin{proof}
	Let $T_i$ be a linear transformation such that $T_ie_j=e_j$ if $j\neq i$ and $T_iw=e_i$. One can see that
	\[T_ie_i=\frac{r}{b_i}(e_i-\sum_{j\neq i}\frac{b_j}{r}e_j)=v'\]
	and \[ T_iv=\sum_{j\neq i}\frac{a_j}{r}e_j+\frac{a_i}{r}\frac{r}{b_i}(e_i-\sum_{j\neq i}\frac{b_j}{r}e_j)
		=\frac{a_i}{b_i}e_i+\sum_{j\neq i} \frac{a_jb_i-a_ib_j}{rb_j}e_j=w'.\]
	Under this linear transformation $\sigma_i$ becomes the standard cone $\ang{e_1,...,e_n}_{\R\gq0}$. 
	Note that
	\begin{align*}
		k_iv'+\lambda w'&=\frac{k_ir+\lambda a_i}{b_i}e_i+\sum_{j\neq i}\frac{\lambda(a_jb_i-a_ib_j)-k_ib_jr}{rb_i}e_j \\
		&=e_i+\sum_{j\neq i}\frac{\lambda a_jb_i-b_ib_j}{rb_i}e_j=e_i-\sum_{j\neq i}k_je_j.
	\end{align*}
	Hence $e_i\in T_i N$ and $T_iN=\ang{e_1,...,e_n,u}_{\Z}$. This implies $U_i$ has cyclic quotient singularity
	which is defined by the vector $u$.\par
	Now assume that $\lambda=1$, then one can see that
	\[ w'=e_i-\sum_{j\neq i}k_je_j-k_iv',\] so one can take $u=v'$.
\end{proof}
\begin{cor}
	Let $x_1$, ..., $x_n$ be the local coordinates of $X$ and $y_1$, ..., $y_n$ be the local coordinates of $U_i$.
	The change of coordinates of $U_i\rightarrow X$ are given by $x_j=y_jy_i^{\frac{b_j}{r}}$ and $x_i=y_i^{\frac{b_i}{r}}$.
\end{cor}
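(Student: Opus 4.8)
The plan is to read the change of coordinates straight off the toric picture already set up for Lemma \ref{wbup}. Write $g_j=e_j$ for $j\neq i$ and $g_i=w=\frac1r(b_1,\ldots,b_n)$, so that $\sigma_i=\ang{g_1,\ldots,g_n}$ and $\sigma=\ang{e_1,\ldots,e_n}$. Since the weights $b_j$ are positive, every $g_j$ lies in $\sigma$, hence $\sigma_i\subseteq\sigma$, and the identity $N\to N$ induces a toric morphism $U_i\to X$; this is precisely the restriction to $U_i$ of the weighted blow-up $Y\to X$. On the dense torus this morphism is the identity, so the whole content lies in how the coordinate characters transform.

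I then identify the coordinates with characters, in the normalization already fixed in the proof of Lemma \ref{wbup}. On $X$ one has $x_k=\chi^{e_k\st}$, where $e_1\st,\ldots,e_n\st$ is the basis dual to $e_1,\ldots,e_n$; on $U_i$ the coordinate $y_j$ is the character $\chi^{g_j\st}$ dual to the generator $g_j$ of $\sigma_i$ (indeed $T_i$ carries $g_j$ to $e_j$ and hence $y_j$ to the standard coordinate of $\A^n_y$). Writing $e_k\st=\sum_j\ang{e_k\st,g_j}\,g_j\st$ and using $\chi^{m+m'}=\chi^m\chi^{m'}$ one gets $x_k=\prod_j(\chi^{g_j\st})^{\ang{e_k\st,g_j}}=\prod_j y_j^{\ang{e_k\st,g_j}}$. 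Since $\ang{e_k\st,g_j}=\delta_{kj}$ for $j\neq i$ while $\ang{e_k\st,g_i}=\ang{e_k\st,w}=b_k/r$, this yields $x_i=y_i^{b_i/r}$ and $x_j=y_j\,y_i^{b_j/r}$ for $j\neq i$, which is the assertion.

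The one point that must be handled with care — and the main obstacle — is the meaning of the fractional exponent $b_j/r$. Since $w$ need not be primitive in $\Z^n$ and $U_i$ is in general a quotient singularity, $y_i^{b_j/r}$ is not literally a regular function on $U_i$; the displayed formulas are to be read on the orbifold covers $\A^n_x\to X$ and $\A^n_y\to U_i$, that is, as identities in the group ring $\Cc[\frac1r\Z^n]$ (equivalently, after raising both sides to a suitable common integral power they become honest identities of regular functions, as already happens in the smooth charts). This is exactly the bookkeeping implicit in Lemma \ref{wbup}, and the dual-basis computation above carries it through automatically; the only genuine risk is orienting the toric morphism the wrong way and landing on the reciprocal exponent, which is why it is safest to trace the identification through the explicit transformation $T_i$.
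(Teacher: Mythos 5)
Your proposal is correct and is essentially the paper's argument written out in full: the paper's one-line proof says the coordinate change is given by $T_i^t$ acting on the character lattice, and your dual-basis expansion $e_k\st=\sum_j\ang{e_k\st,g_j}g_j\st$ is exactly what that transpose computes. Your extra remarks on the orbifold meaning of the fractional exponents and on the orientation of the morphism are consistent with how the paper uses these formulas later, so nothing is missing.
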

\begin{proof}
	The change of coordinate is defined by $T_i^t$, where $T_i$ is defined as in Lemma \ref{wbup}.
\end{proof}
\begin{cor}
	Assume that \[S=(f_1(x_1,...,x_n)=...=f_k(x_1,...,x_n)=0)\subset W\] is a complete intersection
	and $S'$ is the proper transform of $S$ on $Y$. Assume that the exceptional locus $E$ of $S'\rightarrow S$ is irreducible and reduced.
	Then \[a(S,E)=\frac{b_1+...+b_n}{r}-\sum_{i=1}^k\w_wf_k(x_1,...,x_n)-1.\]
\end{cor}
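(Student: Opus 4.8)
The plan is to reduce to a discrepancy computation on the ambient space $W$ and then descend by adjunction. First I would record the discrepancy of the exceptional divisor $F$ of the weighted blow-up $\pi:Y\rightarrow W$, namely $a(W,F)=\frac{b_1+\dots+b_n}{r}-1$. In the toric description $W=\spc\Cc[N\du\cap\sigma\du]$ with $\sigma=\ang{e_1,\dots,e_n}_{\R\gq0}$, the rays of $\sigma$ are generated by $e_1,\dots,e_n$, the toric boundary $\sum_i D_i$ represents $-K_W$, and $F$ is the toric divisor of the ray $\ang{w}$ with $w=\frac1r(b_1,\dots,b_n)=\sum_i\frac{b_i}{r}e_i$, so the log discrepancy of $F$ is $\sum_i\frac{b_i}{r}$. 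Equivalently this is read from the change of coordinates $x_j=y_jy_i^{b_j/r}$, $x_i=y_i^{b_i/r}$ of the previous corollary: that substitution has Jacobian $\frac{b_i}{r}\,y_i^{(b_1+\dots+b_n)/r-1}$, so $\pi^*(dx_1\wedge\dots\wedge dx_n)$ vanishes along $F=(y_i=0)$ to order $\frac{b_1+\dots+b_n}{r}-1$, in each chart $U_i$.

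Next, for each $\ell=1,\dots,k$ set $D_\ell=(f_\ell=0)\subset W$ and let $D'_\ell\subset Y$ be its proper transform. From the same coordinates, $v_F$ restricts on $W$ to the monomial valuation with $v_F(x_j)=\frac{b_j}{r}$, i.e.\ $v_F=v_w$; hence $v_F(f_\ell)=\w_w f_\ell$ and $\pi^*D_\ell=D'_\ell+(\w_w f_\ell)\,F$. Since $S=D_1\cap\dots\cap D_k$ and $S'=D'_1\cap\dots\cap D'_k$ are complete intersections, adjunction gives $K_S=(K_W+\sum_\ell D_\ell)|_S$ and $K_{S'}=(K_Y+\sum_\ell D'_\ell)|_{S'}$. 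Combining these with $K_Y=\pi^*K_W+a(W,F)\,F$,
\begin{align*}
K_{S'}&=\Bigl(\pi^*K_W+a(W,F)\,F+\sum_{\ell=1}^{k}\bigl(\pi^*D_\ell-(\w_w f_\ell)F\bigr)\Bigr)\Big|_{S'}\\
&=\bigl(\pi|_{S'}\bigr)^{*}\Bigl(\bigl(K_W+\sum_{\ell}D_\ell\bigr)\big|_{S}\Bigr)+\Bigl(a(W,F)-\sum_{\ell=1}^{k}\w_w f_\ell\Bigr)\,F|_{S'}.
\end{align*}
Because $E$ is assumed irreducible and reduced we have $S'\not\subset F$ and $F|_{S'}=E$, so comparison with $K_{S'}=(\pi|_{S'})^{*}K_S+a(S,E)\,E$ gives $a(S,E)=a(W,F)-\sum_{\ell=1}^{k}\w_w f_\ell=\frac{b_1+\dots+b_n}{r}-\sum_{\ell=1}^{k}\w_w f_\ell-1$, which is the assertion.

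The point requiring care is the legitimacy of the adjunction step. One needs $S$ and $S'$ to be normal and $\Q$-Gorenstein so that $K_S$, $K_{S'}$ and $a(S,E)$ are defined; one needs the $D'_\ell$ to cut out $S'$ as a complete intersection in a neighbourhood of $E$, so that $S'$ is Cohen--Macaulay there and the identity $K_{S'}=(K_Y+\sum_\ell D'_\ell)|_{S'}$ holds; and one needs $F|_{S'}=E$ scheme-theoretically, which is precisely what the hypothesis ``$E$ irreducible and reduced'' guarantees --- without it $F|_{S'}$ would be a sum of several prime divisors and no single number $a(S,E)$ could be isolated. A secondary, purely bookkeeping issue is that $W$, $Y$, $S$, $S'$ all carry cyclic quotient singularities and the exponents $b_j/r$ are fractional; since everything is $\Q$-Gorenstein the discrepancies are well-defined rationals and these fractions are manipulated exactly as in Lemma \ref{wbup}.
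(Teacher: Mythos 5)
Your proof is correct and follows essentially the same route as the paper: the paper likewise computes $K_Y=\phi\st K_W+(\frac{b_1+\dots+b_n}{r}-1)E$ from the Jacobian of the chart coordinates and then simply invokes "the adjunction formula" for the descent to $S$, which is exactly the step you spell out via $\pi\st D_\ell=D'_\ell+(\w_wf_\ell)F$. Your explicit discussion of the hypotheses needed for adjunction is a welcome elaboration of what the paper leaves implicit, but it is not a different argument.
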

\begin{proof}
	Assume first that $k=0$. Denote $\phi:Y\rightarrow W$. Then on $U_i$ we have
	\[\phi\st dx_1\wedge...\wedge dx_n=\frac{b_i}{r}y_i^{\frac{b_i}{r}-1}
		\left(\prod_{j\neq i} y_i^{\frac{b_j}{r}}\right) dy_1\wedge...\wedge dy_n,\]
	hence $K_Y=\phi\st K_W+(\frac{b_1+...+b_n}{r}-1)E$.\par
	Now the statement follows from the adjunction formula.
\end{proof}
It is known that any analytic germ of three-dimensional terminal singularity can be embedded into a four-dimensional cyclic-quotient space.
In this paper we are going to study $cA/r$ singularities, that is, a three-dimensional terminal singularity with the following specific form
\[ X\cong(xy-f(z,u)=0)\subset \A^4_{(x,y,z,u)}/\frac{1}{r}(a,-a,1,0)\]
\begin{cov}\label{cov}
	Assume that $X$ is of the above form and let $Y\rightarrow X$ be a weighted blow-up.
	The notation $U_x$, $U_y$, $U_z$ and $U_u$ will stand for $U_1$, ..., $U_4$ in Lemma \ref{wbup}.
\end{cov}

\subsection{Resolution of terminal threefolds of type $cA/r$}\label{sRes}
For a divisorial contraction, we always mean a birational map between terminal threefolds $f:X'\rightarrow X$, such that $exc(f)$ is an
irreducible divisor, and $K_{X'}$ is $f$-anti-ample. We say that a divisorial contraction $X'\rightarrow X$ is a \emph{$w$-morphism} if
it contracts a divisor $E$ to a point $P$, and $a(X,E)=\frac{1}{r_P}$. Here $r_P$ denotes the Cartier index of $K_X$ near $P$, that is,
the smallest integer such that $r_PK_X$ is a Cartier divisor near $P$.\par
In \cite{c} J. A. Chen proved that any terminal threefold has a \emph{feasible resolution}. That is, a sequence of $w$-morphisms
\[ X_k\rightarrow X_{k-1}\rightarrow...\rightarrow X_1\rightarrow X_0=X\]
such that $X_k$ is smooth. We will discuss the feasible resolution of $cA/r$ singularities.\par
Let \[X=(xy-f(z,u)=0)\subset\Cc^4_{(x,y,z,u)}/\frac{1}{r}(a,-a,1,0)\] be a $cA/r$ singularity. We may always assume that $z^{rm}\in f(z,u)$.
Define $w_k$ be the weight such that $w_k(z,u)=(\frac{k}{r},1)$
We denote $m_k=m_k(f)=\w_{w_k}(f(z,u))$. For convenience we will write $m_1=m$.
\begin{lem}\label{addm}
	Assume that $X'\rightarrow X$ be the weight blow-up with weight $\frac{1}{r}(a,rm-a,1,r)$ and let $P'\in X'$ be the origin of
	the chart $U_u$ (cf. Convention \ref{cov}). We denote the local equation of $U_u$ by
	\[U_u\cong(x'y'-f'(z',u)=0)\subset\Cc^4_{(x',y',z',u)}/\frac{1}{r}(a,-a,1,0).\] 
	Then $m_k(f)=m'_{k-1}(f')+m$ for all $k\geq 2$.
\end{lem}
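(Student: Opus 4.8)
The plan is to compute the defining equation of the chart $U_u$ explicitly via the coordinate change formulas and then read off the $w_k$-weights of the resulting polynomial $f'$. By the corollary after Lemma \ref{wbup} applied to the weight $w = \frac{1}{r}(a, rm-a, 1, r)$, the map $U_u \to X$ is given (in the $U_u = U_4$ chart, where $i$ corresponds to the $u$-coordinate and $b_4 = r$) by $x = x' u^{a/r}$, $y = y' u^{(rm-a)/r}$, $z = z' u^{1/r}$, and $u = u^{r/r} = u$. Substituting into $xy - f(z,u) = 0$ gives $x'y' u^{m} - f(z' u^{1/r}, u) = 0$, so after dividing by $u^{m}$ we get $x'y' - u^{-m} f(z' u^{1/r}, u) = 0$; hence $f'(z', u) = u^{-m} f(z' u^{1/r}, u)$. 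The first step is to verify this substitution carefully and to check that $f'$ is indeed a polynomial (equivalently a power series) in $z', u$, which follows because $z^i u^j$ is sent to $z'^i u^{(i/r) + j}$ and $w_1 f = m$ forces $i/r + j \geq m$ for every monomial of $f$ — wait, more precisely $w_1(z,u) = (1/r, 1)$ so $w_1(z^i u^j) = i/r + j \geq m_1 = m$, giving integrality after dividing by $u^m$.

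Next I would translate $w_k$-weights under this substitution. A monomial $z^i u^j$ appearing in $f$ contributes the monomial $z'^i u^{(i/r)+j-m}$ to $f'$. Its $w_{k-1}$-weight (where $w_{k-1}(z',u) = (\frac{k-1}{r}, 1)$) is
\[
\w_{w_{k-1}}\!\left(z'^i u^{(i/r)+j-m}\right) = \frac{(k-1)i}{r} + \frac{i}{r} + j - m = \frac{ki}{r} + j - m = \w_{w_k}(z^i u^j) - m.
\]
Thus every monomial's $w_{k-1}$-weight in $f'$ equals its $w_k$-weight in $f$ minus $m$. Taking the minimum over all monomials of $f$ (which is the same index set as the monomials of $f'$, since the substitution is a bijection on monomials and no cancellation occurs as the exponents $(i, (i/r)+j-m)$ are determined by $(i,j)$) yields $m'_{k-1}(f') = m_k(f) - m$, which is the claim. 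The hypothesis $k \geq 2$ is exactly what guarantees $w_{k-1}$ is a genuine weight with positive first component, i.e. $k - 1 \geq 1$.

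The main obstacle I anticipate is bookkeeping rather than conceptual: making sure the chart $U_u$ really has the asserted cyclic-quotient form $\Cc^4_{(x',y',z',u)}/\frac{1}{r}(a,-a,1,0)$ (so that the notation $m'_{k-1}(f')$ makes sense with the same conventions), and confirming that dividing by $u^m$ is legitimate and produces no spurious monomials with negative $u$-exponent. For the former, one invokes Lemma \ref{wbup} with $\lambda = 1$: since $b_i = r$ for the $u$-coordinate and the vector $v' = \frac{1}{r}(-a, -(rm-a), -1, r)$ gives, after the lattice computation, a quotient isomorphic to the original $\frac{1}{r}(a,-a,1,0)$ type — this should be checked directly but is routine. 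For the latter, one uses that $z^{rm} \in f$ (assumed) together with the fact that $X$ is $cA/r$ to see $w_1 f = m$ exactly, so $u^{-m} f(z' u^{1/r}, u)$ has no negative powers of $u$ and is genuinely an element of $\Cc[[z', u]]$ (or its $G$-invariant subring under the induced action).
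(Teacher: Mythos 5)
Your proposal is correct and follows essentially the same route as the paper: substitute the chart coordinates $z=z'u^{1/r}$ into $f$, divide by $u^m$, and observe that each monomial's $w_{k-1}$-weight in $f'$ is its $w_k$-weight in $f$ minus $m$, then take minima. The paper's proof is a more compressed version of the same computation (writing $f=\sum a_{ij}z^{ri}u^j$ so that invariance and integrality are built in), and your extra checks on the chart structure and the absence of negative $u$-exponents are correct but routine.
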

\begin{proof}
	Write $f(z,u)=\sum a_{ij}z^{ri}u^j$, then $f'(z',u)=f(z'u^{\frac{1}{r}},u)/u^m=\sum a_{ij}{z'}^{ri}u^{i+j-m}$. Hence
	\[ m_k(f)=\min\se{ki+j}{a_{ij}\neq0}=\min\se{(k-1)i+i+j-m}{a_{ij}\neq0}+m=m'_{k-1}(f')+m.\]
\end{proof}
We now describe the feasible resolution of $X$.
\begin{enumerate}[(1)]
\item Cyclic-quotient singularities. Assume that $u\in f(z,u)$ then
	\[ X\cong\A^3_{(x,y,z)}/\frac{1}{r}(a,-a,1)\] is a cyclic-quotient singularity.
	The only $w$-morphism over $X$ is the weighted blow-up with weight $\frac{1}{r}(a,r-a,1)$. The resulting variety has two cyclic-quotient
	points of indices $a$ and $r-a$, and they are both less than $r$. By induction on $r$ we can say that, after finite steps of
	weighted blowing-ups the singularity can be resolved and we get a feasible resolution of $X$. In this case the feasible resolution is unique.
	In fact, it is the economic resolution of $X$.
\item Gorenstein $cA$ singularities. Assume that $r=1$. Since $X$ has isolated singularities, either $u^l$ or $zu^l\in f(z,u)$ for some
	$l\in\N$. Thus $m_{k+1}\leq m_k+1$ for $k\gg1$. Let $\bar{\delta}(f(z,u))=\min\se{k}{m_{k+1}\leq m_k+1}$ and
	\[\bar{\delta}(X)=\max\se{\bar{\delta}(f(z+\phi(u),u))}{\phi(u)\in u\Cc[[u]]}.\]\par
	Let $X'\rightarrow X$ be the weighted blow-up with weight $(m-1,1,1,1)$. $X'$ has a cyclic-quotient singularities which is of the form
	$\frac{1}{m-1}(-1,1,1)$ and possible some $cA$ singularities. We already known that the feasible resolution of cyclic-quotient
	singularities exists. Let $P'$ be a $cA$ point on $Y$. Since we assume that $z^{rm}\in f(z,u)$, $P'$ is not the origin of the chart $U_z$.
	After a suitable change of coordinate $z\rightarrow z+\lambda u$ one may assume that $P'$ is the origin of $U_u$.\par
	We use the notation in Lemma \ref{addm} and we denote $m'=m_1(f')$. Since we assume that $z^m\in f(z,u)$, we have ${z'}^m\in f'(z',u)$,
	hence $m\geq m'$. If $m=m'$ we have $z^{m'}\in f'(z',u)$ and Lemma \ref{addm} says that $\bar{\delta}(X)>\bar{\delta}(P'\in X')$.
	Thus we have either $m>m'$ or $m=m'$ and $\bar{\delta}(X)>\bar{\delta}(X')$. One can say that a feasible resolution of $X$
	exists by induction on the tuple $(m,\bar{\delta}(X))$.
\item $cA/r$ points with $r>1$. In this case $u^l\in f(z,u)$ for some $l\in\N$ since otherwise the singularity of $X$ is not isolated.
	Hence $m_{k+1}=m_k$ for $k\gg1$ and we define $\delta(X)=\min\se{k}{m_{k+1}=m_k}$. Note that unlike the Gorenstein case, when $r>1$
	one has $\delta(X)$ is independent of any possible change of coordinates.\par
	Let $X'\rightarrow X$ be the weighted blow-up with weight $\frac{1}{r}(a,rm-a,1,r)$.
	The origin $P'$ of the chart $U_u$ is a $cA/r$ point and the other singularities of $X'$ are cyclic quotient points and $cA$ points.
	We already known that a feasible resolution of $cA$ points and cyclic quotient points exists.
	Now we have $\delta(X)=\delta(P'\in X')-1$ by Lemma \ref{addm}, hence a feasible resolution of $P'$ exists by induction on $\delta(X)$.
\end{enumerate}

\begin{defn}
	Let \[X=(xy-f(z,u)=0)\subset\Cc^4_{(x,y,z,u)}/\frac{1}{r}(a,-a,1,0)\] be a $cA/r$ singularity. One can construct a birational map
	$Y\rightarrow X$ as follows:
	\begin{enumerate}[(1)]
	\item If $X$ is a cyclic-quotient singularity, let $Y$ be the feasible resolution (or the economic resolution) of $X$.
	\item If $X$ is a Gorenstein singularity, let $Y$ be the variety obtained by first weighted blow-up $X$ with weight $(m-1,1,1,1)$
		and then resolve all the cyclic-quotient singularities on the resulting variety in the way of step (1).
	\item If $X$ is a $cA/r$ singularity with $r>1$. Let $P_0$ be the singular point of $X_0=X$, $f_0(z,u)=f(z,u)$,
		$m^{(0)}=m_1(f_0)$. Let $X_{i+1}\rightarrow X_i$ be the weighted blow-up $P_i$ with weight $\frac{1}{r}(a,m^{(i)}r-a,1,r)$,
		$P_{i+1}$ be the origin of $(U_u)_{X_{i+1}}\subset X_{i+1}$, $xy-f_{i+1}(z,u)$ be the local defining equation near $P_{i+1}$
		and $m^{(i+1)}=m_1(f_{i+1})$. Since we have $\delta(P_{i+1})=\delta(P_i)-1$ by Lemma \ref{addm}, one has the following sequence
		of $w$-morphisms \[ X_{\delta(X)}\rightarrow X_{\delta(X)-1}\rightarrow ...\rightarrow X_1\rightarrow X_0=X\]
		such that $X_{\delta(X)}$ has only cyclic-quotient singularities or $cA$ singularities. We define $Y\rightarrow X_{\delta(X)}$
		to be the resolution of all the cyclic-quotient points on $X_{\delta(X)}$ in the way of step (1).
	\end{enumerate}
	Under this construction $Y$ is a Gorenstein terminal threefold, and we call it the \emph{Gorenstein resolution} of $X$.
\end{defn}
\section{Exceptional divisors on the Gorenstein resolution}\label{sGor}
Let $Y\rightarrow X$ be the Gorenstein resolution we constructed in the previous section.
We are going to compute the exceptional divisors on $Y$ over $X$.
\subsection{Cyclic quotient singularities}
Assume that $X\cong \Cc^3_{(x,y,z)}/\frac{1}{r}(a,-a,1)$ is a three-dimensional cyclic quotient terminal singularity.
The following statement is well-known to experts (cf. \cite[(5.7)]{r}. However we can not find a reference for the
explicit description, hence we write a proof here.
\begin{pro}\label{cq}
	There are $r-1$ exceptional divisors $E_1$, ..., $E_{r-1}$ on $Y$ over $X$. We have $a(X,E_i)=\frac{i}{r}$ and
	$E_i$ corresponds to the valuations $v_i(x,y,z)=\frac{1}{r}(\ol{ia},\ol{-ia},i)$ for $i=1$, ..., $r-1$, here $\ol{n}=n-\rd{\frac{n}{r}}r$.
\end{pro}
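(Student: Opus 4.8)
My plan is to pass to toric language and argue by induction on $r$. Since $X$ is terminal we have $\gcd(a,r)=1$, and after replacing $a$ by $\ol a$ I may assume $0<a<r$. Following Section \ref{sRes}, write $X\cong\spc\Cc[N\du\cap\sigma\du]$ with $\sigma=\ang{e_1,e_2,e_3}_{\R\gq0}$ and $N=\ang{e_1,e_2,e_3,v}_\Z$, $v=\frac1r(a,-a,1)$. For $1\le i\le r-1$ put
\[\rho_i=iv-(\rd{ia/r},\rd{-ia/r},0)=\frac1r(\ol{ia},\ol{-ia},i)\in N.\]
Because $\gcd(a,r)=1$ and $0<i<r$ one has $ia\not\equiv0\pmod r$, hence $\ol{ia}+\ol{-ia}=r$; a short argument (the crux being that $\rd{ia/r}$ and $\rd{ia/r}+1$ are coprime) shows each $\rho_i$ is primitive in $N$. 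The goal is to prove that the economic resolution $Y\to X$ of Section \ref{sRes}(1) is the toric morphism attached to the subdivision of $\sigma$ whose ray set is $\{e_1,e_2,e_3,\rho_1,\dots,\rho_{r-1}\}$, and then to read the valuations and discrepancies off this fan.

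The base case $r=1$ is empty, $X$ being smooth. For $r\ge2$, by Section \ref{sRes}(1) the unique $w$-morphism over $X$ is the weighted blow-up $\phi_1\colon X_1\to X$ with weight $\frac1r(a,r-a,1)=\rho_1$, which is the star subdivision of $\sigma$ along $\rho_1$. Its exceptional divisor $E_1$ has valuation $v_1=\frac1r(a,r-a,1)=\frac1r(\ol a,\ol{-a},1)$, and by the discrepancy formula for a weighted blow-up (the case $k=0$ of the Corollary following Lemma \ref{wbup}) one gets $a(X,E_1)=\frac{a+(r-a)+1}{r}-1=\frac1r$. By Lemma \ref{wbup} the two non-smooth charts are $U_x$ and $U_y$, cyclic-quotient terminal points of indices $a$ and $r-a$; normalising the weight vectors produced by Lemma \ref{wbup} shows that each is again a singularity of the form $\frac1{r'}(a',-a',1)$ with $r'<r$. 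The induction hypothesis then gives $a-1$ and $(r-a)-1$ exceptional divisors on their economic resolutions, so $Y$ carries $1+(a-1)+(r-a-1)=r-1$ exceptional divisors over $X$, which is the asserted count.

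To locate the remaining divisors I would use the elementary cone computation: expanding $\rho_i=\alpha\rho_1+\beta e_2+\gamma e_3$ gives $\alpha=\ol{ia}/a$, $\beta=(a-\ol{ia})/a$ and $\gamma=\rd{ia/r}/a$, so for $2\le i\le r-1$ the ray $\rho_i$ lies in $\ang{\rho_1,e_2,e_3}_{\R\gq0}$ exactly when $\ol{ia}<a$, lies in $\ang{e_1,\rho_1,e_3}_{\R\gq0}$ exactly when $\ol{ia}>a$, and never lies in the smooth chart $\ang{e_1,e_2,\rho_1}_{\R\gq0}$; since $i\mapsto\ol{ia}$ permutes $\{1,\dots,r-1\}$, these two cases occur for exactly $a-1$ and $r-a-1$ values of $i$. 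Pulling the rays of the inductively-known resolutions of $U_x$ and $U_y$ back to $X$ through the change of coordinates $x_j=y_jy_i^{b_j/r}$, $x_i=y_i^{b_i/r}$ of the Corollary following Lemma \ref{wbup} identifies them with exactly these $\rho_i$; together with $\rho_1$ this shows the rays of $Y$ over $X$ are precisely $\rho_1,\dots,\rho_{r-1}$ and that $Y$ is smooth. The divisor $E_i$ attached to $\rho_i$ then has valuation $v_{E_i}(x,y,z)=\frac1r(\ol{ia},\ol{-ia},i)$ by the description of monomial valuations on a toric variety, and, being a toric divisor on the smooth $Y$, it has discrepancy equal to the sum of the coordinates of $\rho_i$ minus one, i.e. $a(X,E_i)=\frac{\ol{ia}+\ol{-ia}+i}{r}-1=\frac{r+i}{r}-1=\frac ir$.

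The step that needs the most care is the bookkeeping in the induction: one must check that after the linear change of coordinates of Lemma \ref{wbup} and the subsequent rescaling of the quotient weights, $U_x$ and $U_y$ are genuinely in the normal form $\frac1{r'}(a',-a',1)$ to which the hypothesis applies, and that under this identification the ray $\rho_i$ of the ambient fan is matched with the ray carrying the index predicted by the smaller economic resolution. Apart from $\gcd(a,r)=1$ — which is responsible for primitivity of the $\rho_i$ and for $\ol{ia}+\ol{-ia}=r$ — this is the only non-formal ingredient; once the recursive identification is in place, the count, the valuations and the discrepancies all follow from the toric dictionary together with the $k=0$ case of the Corollary above.
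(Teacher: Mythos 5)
Your overall strategy coincides with the paper's: induct on $r$, perform the single $w$-morphism given by the weighted blow-up with weight $\frac1r(a,r-a,1)$, identify the two residual charts as cyclic quotients of indices $a$ and $r-a$ in the same normal form $\frac1{r'}(a',-a',1)$, and transport the inductively known exceptional data back to $X$. The one place where you genuinely depart from the paper is in how the remaining $r-2$ divisors are matched with $v_2,\dots,v_{r-1}$. The paper first computes that the valuation $u_j$ over the index-$a$ point pulls back to $v_i$ with $i=\ru{\frac{jr}{a}}$ (similarly $w_k\mapsto v_i$ with $i=\ru{\frac{kr}{r-a}}$), and then proves the exhaustion-and-distinctness lemma that these ceilings fill out $\{2,\dots,r-1\}$ without repetition. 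You instead expand each $\rho_i$ in the cones of the star subdivision, observe that $\rho_i$ sits over the index-$a$ (resp.\ index-$(r-a)$) chart exactly when $\ol{ia}<a$ (resp.\ $\ol{ia}>a$) and never over the smooth chart, and finish by counting; your coefficient computation is correct (and the "never smooth chart" claim follows since the $e_1$- and $e_2$-coefficients there sum to $1-i<0$), so this cleanly replaces the paper's combinatorial lemma. That said, the step you explicitly defer --- checking that the rays of the inductively known resolutions of $U_x$ and $U_y$, transported through the coordinate change of Lemma \ref{wbup} and the renormalisation of the quotient weights, really land on lattice points of the form $\frac1r(\ol{ia},\ol{-ia},i)$ --- is precisely the computation that occupies most of the paper's proof (the displayed chains of equalities for $u_j(x,y,z)$ and $w_k(x,y,z)$), and your counting argument only closes once it is done. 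The same caveat applies to the discrepancies: your formula $a(X,E_i)=\frac{\ol{ia}+\ol{-ia}+i}{r}-1=\frac ir$ is valid once $Y$ is known to be the smooth toric model with exactly these rays, whereas the paper instead tracks discrepancies through the induction via $a(X,E_i)=\frac1r v_i(x_1)+\frac{j}{a}$; either route works. So: sound plan, same skeleton as the paper, a nicer treatment of the combinatorics, but the decisive ray-identification is stated as a promissory note rather than carried out.
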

\begin{proof}
	We always assume $0<a<r$.
	We prove by induction on $r$. If $r=2$, then $X\cong\Cc^3/\frac{1}{2}(1,1,1)$. It is clear that after weighted blow-up $\frac{1}{2}(1,1,1)$
	we get a smooth threefold and the exceptional divisor corresponds to the valuation $v_1(x,y,z)=\frac{1}{2}(1,1,1)$. Now for general $r$, we
	consider $g:X_1\rightarrow X$ be the weighted blow-up $X$ with weight $\frac{1}{r}(a,r-a,1)$. We have $E_1=exc(X_1\rightarrow X)$
	corresponds to the valuation $v_1(x,y,z)=\frac{1}{r}(a,r-a,1)$. There are two singular point
	$P\cong \Cc^3_{(x_1,y_1,z_1)}/\frac{1}{a}(-r,r,1)$ and $Q\cong\Cc^3_{(x_2,y_2,z_2)}/\frac{1}{r-a}(r,-r,1)$. By induction on $r$ we have
	the exceptional divisors on $Y$ over $P$ corresponds to the valuations
	\[u_j(x_1,y_1,z_1)=\frac{1}{a}((\ol{-jr})_a,(\ol{jr})_a,j)=\frac{1}{a}(\ru{\frac{jr}{a}}a-jr,jr-\rd{\frac{jr}{a}}a,j).\]
	and the exceptional divisors on $Y$ over $Q$ corresponds to 
	\[w_k(x_2,y_2,z_2)=\frac{1}{r-a}((\ol{kr})_{r-a},(\ol{-kr})_{r-a},k)=
		\frac{1}{r-a}(kr-\rd{\frac{kr}{r-a}}(r-a),\ru{\frac{kr}{r-a}}(r-a)-kr,k).\]
	One only needs to show that
	\[ \{v_2,...,v_{r-1}\}=\{ u_1,...,u_{a-1}\}\cup\{w_1,...,w_{r-a-1}\}\]
	To see it, note that $E_1=(x_1=0)$ near $P$, so
	\begin{align*}
	u_j(x,y,z)&=(\frac{a}{r}\frac{1}{a}(\ru{\frac{jr}{a}}a-jr),\frac{r-a}{r}\frac{1}{a}(\ru{\frac{jr}{a}}a-jr)+\frac{1}{a}(jr-\rd{\frac{jr}{a}}a),
		\frac{1}{r}\frac{1}{a}(\ru{\frac{jr}{a}}a-jr)+\frac{1}{a}j) \\
	&=(\frac{a}{r}\ru{\frac{jr}{a}}-j,(1-\frac{a}{r})\ru{\frac{jr}{a}}-\rd{\frac{jr}{a}}+(\frac{a-r}{ra}+\frac{1}{a})jr,
		\frac{1}{r}\ru{\frac{jr}{a}})\\
	&=(\frac{a}{r}\ru{\frac{jr}{a}}-j,1-\frac{a}{r}\ru{\frac{jr}{a}}+j,\frac{1}{r}\ru{\frac{jr}{a}})\\
	&=\frac{1}{r}\left(\ol{(a\ru{\frac{jr}{a}})}_r,\ol{(-a\ru{\frac{jr}{a}})}_r,\ru{\frac{jr}{a}}\right).
	\end{align*}	 
	Similarly, we have
	\begin{align*}
	w_k(x,y,z)&=\frac{1}{r}\left(\ol{(-(r-a)\ru{\frac{kr}{r-a}})}_r,\ol{((r-a)\ru{\frac{kr}{r-a}})}_r,\ru{\frac{kr}{r-a}}\right)\\
	&=\frac{1}{r}\left(\ol{(a\ru{\frac{kr}{r-a}})}_r,\ol{(-a\ru{\frac{kr}{r-a}})}_r,\ru{\frac{kr}{r-a}}\right).
	\end{align*}
	Hence it is enough to show that
	\[\left\lbrace \ru{\frac{jr}{a}}\right\rbrace_{j=1}^{a-1} \cup \left\lbrace \ru{\frac{kr}{r-a}} \right\rbrace_{k=1}^{r-a-1}=\{2,...,r-1\}.\]
	Indeed, we have \[1<\ru{\frac{jr}{a}}<\frac{jr}{a}+1\leq\frac{(a-1)r}{a}+1=r-\frac{r}{a}+1<r.\] and similarly
	\[1<\ru{\frac{kr}{r-a}}<r.\]
	Since both left-hand-side and right-hand-side has $r-2$ elements, one only need to say that $\alpha_j=\ru{\frac{jr}{a}}$
	and $\beta_k=\ru{\frac{kr}{r-a}}$ are all distinct for $j=1$, ..., $a-1$, $k=1$, ..., $r-a-1$.
	First note that if $j<j'$ then
	\[\alpha_{j'}-\alpha_j=\ru{\frac{j'r}{a}}-\ru{\frac{jr}{a}}>\frac{j'r}{a}-(\frac{jr}{a}+1)=\frac{r}{a}(j'-j)-1\geq\frac{r}{a}-1>0,\]
	so $\alpha_{j'}\neq\alpha_j$ and similarly $\beta_{k'}\neq\beta_k$ if $k\neq k'$. Now assume that
	$\alpha_j=\beta_k=\lambda$. Let
	\[ u=\lambda a-jr=\ru{\frac{jr}{a}}a-jr,v=\lambda(r-a)-kr=\ru{\frac{kr}{r-a}}(r-a)-kr,\]
	then we have $0\leq u<a$, $0\leq v<r-a$. Thus $u+v<r$. On the other hand
	\[u+v=\lambda(a+(r-a))-(j+k)r=(\lambda-j-k)r\]
	is divisible by $r$, which implies $u+v=0$, hence $u=v=0$. We have $\lambda a=jr$ and so $\lambda a$ is divisible by $r$.
	This is impossible because $a$ and $r$ are coprime and $\lambda<r$.\par
	Now we prove that $v_i$ corresponds to a exceptional divisor of discrepancy $\frac{i}{r}$ over $X$.
	When $i=1$ this follows from the construction. Assume that $i>1$, then $v_i=u_j$ or $w_k$ for some $j$, $k$. Assume $v_i=u_j$, then
	$i=\ru{\frac{jr}{a}}$ by the computation above.
	By induction on the index we may assume $K_Y=h\st K_{X_1}+\frac{j}{a}E_i+\mbox{others}$, where $h$ denotes the morphism $Y\rightarrow X_1$.
	Hence \[ K_Y=(h\circ g)\st K_X+\frac{1}{r} h\st E_1+\frac{j}{a}E_i+\mbox{others}.\]
	Since $E_1=(x_1=0)$ and $u_j(x_1)=\frac{(\ol{-jr})_a}{a}$, we have
	\[ a(X,E_i)=\frac{1}{r}\frac{(\ol{-jr})_a}{a}+\frac{j}{a}=\frac{1}{ra}(a\ru{\frac{jr}{a}}-jr+jr)=\frac{i}{r}.\]
	Similar computation holds if $v_i=w_k$ for some $k$.	
\end{proof}
\subsection{General $cA/r$ singularities}\label{sGres}
Now let \[X=(xy+f(z,u)=0)\subset\Cc^4_{(x,y,z,u)}/\frac{1}{r}(a,-a,1,0)\] be a $cA/r$ singularity and we assume $a<r$.
Let $g:X_1\rightarrow X$ be the weighted blow-up of weight $\frac{1}{r}(a,rm-a,1,r)$.
Let $P_1$, $P_2$ and $P'$ be the origin of the charts $U_x$, $U_y$ and $U_u$ respectively. They are all possible non-Gorenstein singularities
of $X_1$. $P_1$ and $P_2$ are cyclic quotient points and $P'$ is a $cA/r$ point.\par
\begin{lem}\label{v1}
	For $j=1$, $2$, the exceptional divisor $E^j_i$ on $Y$ over $P_j$ corresponds to the valuation $v^j_i$ such that 
	\[v^1_i(x,z,u)=(\frac{a}{r}\ru{\frac{ir}{a}}-i,\frac{1}{r}\ru{\frac{ir}{a}},1)\]
	for $i=1$, ..., $a-1$ and
	\[v^2_i(y,z,u)=(\frac{rm-a}{r}\ru{\frac{ir}{rm-a}}-i,\frac{1}{r}\ru{\frac{ir}{rm-a}},1)\]
	for $i=1$, ..., $rm-a-1$.
	Furthermore we have $a(X, E^j_i)=v^j_i(z)$.
\end{lem}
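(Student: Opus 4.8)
The plan is to do everything in the two affine charts $U_x=U_1$ and $U_y=U_2$ of the weighted blow-up $g\colon X_1\to X$: identify $X_1$ near $P_j$ as an explicit cyclic quotient singularity, extract the divisors $E^j_i$ and their discrepancies from Proposition \ref{cq}, and transport the resulting valuations back to the coordinates $(x,y,z,u)$ using the change-of-coordinate formula in the corollaries to Lemma \ref{wbup}.

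I would start with $P_1$, the origin of $U_x$. Writing $f(z,u)=\sum a_{ij}z^{ri}u^j$ (only exponents of $z$ divisible by $r$ occur, by $\frac1r(a,-a,1,0)$-invariance) one has $m=\min\{i+j:a_{ij}\neq0\}$. Since $b_3=1$, Lemma \ref{wbup} applies with $\lambda=1$ and gives $U_x\cong\A^4_{(\xi_1,\xi_2,\xi_3,\xi_4)}/\frac1a\bigl(r,-(rm-a),-1,-r\bigr)$ with $x=\xi_1^{a/r}$, $y=\xi_2\xi_1^{(rm-a)/r}$, $z=\xi_3\xi_1^{1/r}$, $u=\xi_4\xi_1$. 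Substituting, $xy+f(z,u)=0$ becomes $\xi_1^m\bigl(\xi_2+\tilde f\bigr)=0$, where $\tilde f=\sum a_{ij}\xi_3^{ri}\xi_4^{j}\xi_1^{\,i+j-m}$ and $\tilde f|_{\xi_1=0}=\sum_{i+j=m}a_{ij}\xi_3^{ri}\xi_4^j\neq0$; hence near $P_1$ the proper transform $X_1$ is the smooth hypersurface $\{\xi_2=-\tilde f\}$, the exceptional divisor $E^1:=exc(g)$ is $\{\xi_1=0\}$, and projection to $(\xi_1,\xi_3,\xi_4)$ identifies $X_1$ near $P_1$ with $\A^3_{(\xi_1,\xi_3,\xi_4)}/\frac1a(r,-1,-r)$. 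As $\gcd(a,r)=1$, after passing to the inverse generator and reordering to $(\xi_1,\xi_4,\xi_3)$ this is the standard terminal cyclic quotient $\frac1a(-r,r,1)$, so Proposition \ref{cq} gives $a-1$ divisors $E^1_1,\dots,E^1_{a-1}$ over $P_1$ with $a(X_1,E^1_i)=\frac ia$ and
\[v^1_i(\xi_1,\xi_4,\xi_3)=\frac1a\bigl(\ol{-ir}_a,\,\ol{ir}_a,\,i\bigr),\qquad \ol n_a:=n-\rd{n/a}a.\]

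It then remains to push this down. Using $\ol{-ir}_a=\ru{ir/a}a-ir$, $\ol{ir}_a=ir-\rd{ir/a}a$ and the substitutions above,
\[v^1_i(x)=\tfrac ar v^1_i(\xi_1)=\tfrac ar\ru{ir/a}-i,\qquad v^1_i(z)=v^1_i(\xi_3)+\tfrac1r v^1_i(\xi_1)=\tfrac1r\ru{ir/a},\]
and $v^1_i(u)=v^1_i(\xi_4)+v^1_i(\xi_1)=\ru{ir/a}-\rd{ir/a}=1$, the last equality because $a\nmid ir$ for $1\le i\le a-1$. Since $g$ is a $w$-morphism, $K_{X_1}=g\st K_X+\tfrac1r E^1$ (equivalently, $a(X,E^1)=\tfrac1r$ by the last corollary to Lemma \ref{wbup}), so
\[a(X,E^1_i)=a(X_1,E^1_i)+\tfrac1r v^1_i(E^1)=\tfrac ia+\tfrac1r\Bigl(\ru{ir/a}-\tfrac{ir}a\Bigr)=\tfrac1r\ru{ir/a}=v^1_i(z).\]
The point $P_2$, the origin of $U_y$, is treated identically under the interchange $x\leftrightarrow y$, $a\leftrightarrow rm-a$: on $U_y$ the equation factors as $\xi_2^m(\xi_1+\tilde f)=0$, $X_1$ near $P_2$ is $\A^3/\frac1{rm-a}(r,-1,-r)$, and $\gcd(rm-a,r)=\gcd(a,r)=1$ again lets Proposition \ref{cq} apply, producing the stated $v^2_i$ and $a(X,E^2_i)$ for $1\le i\le rm-a-1$.

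There is no conceptual difficulty; the only points that need care are the bookkeeping for the quotient and the residue arithmetic. For the former, one checks that every monomial of $\tilde f$ carries the same weight as $\xi_2$ under the cyclic action — the one-line computation $ri(-1)+j(-r)+(i+j-m)r=-mr\equiv-(rm-a)\pmod a$ — so that $\{\xi_2=-\tilde f\}$ is genuinely invariant and the induced quotient on $(\xi_1,\xi_3,\xi_4)$ is exactly $\frac1a(r,-1,-r)$. For the latter, one must rewrite the residues $\ol{\pm ir}_a$ coming out of Proposition \ref{cq} correctly in terms of $\ru{\cdot}$ and $\rd{\cdot}$, using $\gcd(a,r)=1$ to ensure $ir/a\notin\Z$ and hence $\ru{ir/a}-\rd{ir/a}=1$. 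Everything else is routine substitution of the monomial change of coordinates.
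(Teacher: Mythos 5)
Your proposal is correct and follows essentially the same route as the paper: identify $P_1,P_2$ as the cyclic quotient points $\frac1a(-r,r,1)$ and $\frac1{rm-a}(r,-r,1)$ in the charts $U_x,U_y$ of the weighted blow-up, pull the valuations $\frac1a(\ol{-ir}_a,\ol{ir}_a,i)$ from Proposition \ref{cq}, and push them down through the monomial coordinate change together with $K_{X_1}=g\st K_X+\frac1rE$. The only difference is that you spell out the chart computation (the factorization $\xi_1^m(\xi_2+\tilde f)$ and the invariance check) that the paper leaves implicit.
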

\begin{proof}
	We will denote the local coordinate near $P_1\in X_1$ by $x_1$, $z_1$ and $u_1$ and we have
	$x=x_1^{\frac{a}{r}}$, $z=x_1^{\frac{1}{r}}z_1$ and $u=x_1u_1$.
	We know that $v^1_i(x_1,z_1,u_1)=\frac{1}{a}((\ol{-ir})_a,i,(\ol{ir})_a)$ by Proposition \ref{cq}, hence
	\begin{align*}
	v^1_i(x,z,u)&=(\frac{a}{r}\frac{1}{a}(a\ru{\frac{ir}{a}}-ir),\frac{1}{r}\frac{1}{a}(a\ru{\frac{ir}{a}}-ir)+\frac{i}{a},
		\frac{1}{a}(a\ru{\frac{ir}{a}}-ir)+\frac{1}{a}(ir-a\rd{\frac{ir}{a}})\\
		&=(\frac{a}{r}\ru{\frac{ir}{a}}-i,\frac{1}{r}\ru{\frac{ir}{a}},\ru{\frac{ir}{a}}-\rd{\frac{ir}{a}})\\
		&=(\frac{a}{r}\ru{\frac{ir}{a}}-i,\frac{1}{r}\ru{\frac{ir}{a}},1).
	\end{align*}
	Now we compute the discrepancy. We have $a(X_1,E^1_i)=\frac{i}{a}$ by Proposition \ref{cq}.
	Let $E$ be the exceptional divisor of $g:X_1\rightarrow X$, then $K_{X_1}=g\st K_X+\frac{1}{r}E$. Since $E$ is defined by $(x_1=0)$
	near $P_1$, $v^1_i(E)=\frac{(\ol{-ir})_a}{a}$ and
	\[a(X,E^1_i)=\frac{i}{a}+\frac{(\ol{-ir})_a}{ra}=\frac{1}{r}\ru{\frac{ir}{a}}=v^1_i(z)\]
	as the same computation in the last part of Proposition \ref{cq}.\par
	The calculation for $v^2_i$ is similar.
\end{proof}
\begin{lem}\label{mdiv}
	For a given $\lambda$ such that $1<\lambda<r$, there are exactly $m$ exceptional divisors on $Y$ over $P_1$ and $P_2$ such that
	the discrepancy of those divisors over $X$ is equal to $\frac{\lambda}{r}$.
	If $\lambda=1$ or $r$, then there are only $m-1$ exceptional divisors on $Y$ over $P_1$ and $P_2$ with discrepancy
	$\frac{\lambda}{r}$ over $X$. 
\end{lem}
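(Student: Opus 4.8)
The plan is to count exceptional divisors by discrepancy level using the explicit valuations from Lemma \ref{v1}. By that lemma, the divisors over $P_1$ are indexed by $i = 1, \dots, a-1$ with $a(X, E^1_i) = \frac{1}{r}\ru{\frac{ir}{a}}$, and those over $P_2$ by $i = 1, \dots, rm-a-1$ with $a(X, E^2_i) = \frac{1}{r}\ru{\frac{ir}{rm-a}}$. So the claim reduces to a purely combinatorial statement: for $1 < \lambda < r$, the equation $\ru{\frac{ir}{a}} = \lambda$ together with $\ru{\frac{ir}{rm-a}} = \lambda$ has exactly $m$ solutions (with $i$ in the respective ranges), while for $\lambda = 1$ or $\lambda = r$ it has exactly $m-1$.

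First I would count, for a fixed target value $\lambda$, how many $i \in \{1, \dots, a-1\}$ satisfy $\ru{\frac{ir}{a}} = \lambda$, and likewise how many $i \in \{1, \dots, rm-a-1\}$ satisfy $\ru{\frac{ir}{rm-a}} = \lambda$. Each such condition says $ir$ lies in a half-open interval of length $a$ (resp. $rm-a$), namely $(\lambda-1)a < ir \le \lambda a$; the number of multiples of $r$ in an interval of length $a$ is either $\rd{a/r}$ or $\ru{a/r}$, and since $a < r$ this is $0$ or $1$, and similarly the interval of length $rm - a$ contains either $m-1$ or $m$ multiples of $r$. The heart of the matter is then to show that these two counts always add up to exactly $m$ when $1 < \lambda < r$: precisely, a multiple of $r$ lies in $((\lambda-1)a, \lambda a]$ iff no multiple of $r$ lies in $((\lambda-1)(rm-a), \lambda(rm-a)]$ that is "missing", i.e., the $a$-interval picks up a multiple exactly when the $(rm-a)$-interval is one short of $m$. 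I would make this precise by working modulo $r$: the $a$-interval contains a multiple of $r$ iff $\ol{(\lambda-1)a}_r \ge \ol{\lambda a}_r$ in the cyclic sense — equivalently iff $\ol{\lambda a}_r < a$ — wait, more carefully, iff the fractional parts wrap around, i.e., iff $\rd{\lambda a / r} > \rd{(\lambda-1)a/r}$. The same reasoning applied to $rm - a \equiv -a \pmod r$ shows the count there is $m-1$ plus $[\rd{\lambda(rm-a)/r} > \rd{(\lambda-1)(rm-a)/r}]$, and since $\lambda(rm-a) + \lambda a = \lambda r m$ is divisible by $r$, the wrap-around of $\ol{\lambda a}_r$ and $\ol{-\lambda a}_r = \ol{\lambda(rm-a)}_r$ are complementary: exactly one of the two intervals wraps, unless $\ol{\lambda a}_r = 0$, i.e., $r \mid \lambda a$, which (as $\gcd(a,r)=1$) happens iff $r \mid \lambda$, i.e., only for $\lambda = r$ in the range $1 \le \lambda \le r$ — and by symmetry $r \mid (\lambda - 1)a$ iff $\lambda = 1$. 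In the two boundary cases $\lambda = 1$ and $\lambda = r$ neither interval wraps (both endpoints being exact multiples, the half-open convention excludes one), giving the count $m-1$.

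The step I expect to be the main obstacle is bookkeeping the half-open intervals and ceiling conventions so that the "exactly one wraps" dichotomy is airtight at the boundaries $\lambda = 1, r$ and at the range endpoints $i = a-1$ and $i = rm-a-1$; in particular one must check that $\ru{\frac{ir}{a}}$ actually takes every value in $\{2, \dots, r\}$ as $i$ runs over $\{1, \dots, a-1\}$ together with the $P_2$-side covering the complementary multiplicities — this is essentially the same distinctness-and-surjectivity argument already carried out in the proof of Proposition \ref{cq} for the set $\{\ru{jr/a}\} \cup \{\ru{kr/(r-a)}\}$, and I would invoke that argument with $r - a$ replaced by $rm - a$, noting that the only property used there was $\gcd(a, r) = 1$ and the interval-length count, both of which persist. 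Once the combinatorial identity is established, the lemma follows immediately since "discrepancy $\frac{\lambda}{r}$ over $X$" is exactly the condition $\ru{\frac{ir}{a}} = \lambda$ (resp. $\ru{\frac{ir}{rm-a}} = \lambda$) by the discrepancy formula in Lemma \ref{v1}.
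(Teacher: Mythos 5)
Your reduction to the combinatorial statement via Lemma \ref{v1} is the same as the paper's, but the counting argument you propose is genuinely different and also works. You compute, for each fixed $\lambda$, the exact number of admissible indices on each side as a difference of floors and derive the total from the complementarity $\lambda a+\lambda(rm-a)\equiv 0\pmod r$: writing $s_1$ and $s_0$ for the residues of $\lambda a$ and $(\lambda-1)a$ modulo $r$, the $P_1$-count is $1$ precisely when $s_1<s_0$ and the $P_2$-count is $(m-1)+1$ precisely when $s_1>s_0$, and since $s_1\neq s_0$ for $1<\lambda<r$ exactly one of the two happens, giving the sum $m$. The paper instead proves only one-sided bounds locally (at most $1$ on the $P_1$ side, between $m-1$ and $m$ on the $P_2$ side), shows by adding the two strict inequalities that the $P_2$-count is exactly $m-1$ whenever $\lambda$ is attained on the $P_1$ side, computes $\lambda=1$ and $\lambda=r$ directly, and then closes with a global count $|S|=rm-2$ to force the remaining values of $\lambda$ to be hit exactly $m$ times; your version is more uniform and avoids that pigeonhole step. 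One correction to your boundary analysis at $\lambda=r$: the right endpoint $ra$ of $((r-1)a,\,ra]$ is a multiple of $r$ and is \emph{included} by the half-open convention (and the left endpoint $(r-1)a\equiv -a$ is not a multiple), so it is not the interval convention that removes it; what discards it is that the corresponding index $i=a$ (resp.\ $j=rm-a$) falls outside the allowed range $i\leq a-1$ (resp.\ $j\leq rm-a-1$). With that fix the count $0+(m-1)=m-1$ comes out as claimed, and the case $\lambda=1$, where the excluded multiple really is the open left endpoint $0$, is as you describe.
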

\begin{proof}
	We consider the set
	\[S=\left\lbrace\ru{\frac{ir}{a}}\right\rbrace_{i=1}^{a-1}\cup\left\lbrace\ru{\frac{jr}{rm-a}}\right\rbrace_{j=1}^{rm-a-1}.\]
	One only need to show that $S$ contains $m$ elements which is equal to $\lambda$ for $1<\lambda<r$, $m-1$ elements equal to $1$ and
	$m-1$ elements equal to $r$.
	First note that every element in $S$ is a positive integer $\leq r$, and $|S|=rm-2$, as we expected. Assume that 
	$\ru{\frac{ir}{a}}=\lambda$, then \[\lambda-1<\frac{ir}{a}<\lambda,\] or equivalently \[\frac{a\lambda}{r}-\frac{a}{r}<i<\frac{a\lambda}{r}.\]
	Hence there is at most $\ru{\frac{a}{r}}=1$ many $i$ satisfied $\ru{\frac{ir}{a}}=\lambda$. Similar argument yields that
	there are at most $\ru{\frac{mr-a}{r}}=m$ many $j$ satisfied $\ru{\frac{jr}{rm-a}}=\lambda$ and at least
	$\rd{\frac{mr-a}{r}}=m-1$ many $j$ satisfied $\ru{\frac{jr}{rm-a}}=\lambda$.\par
	Let $\lambda_i=\ru{\frac{ir}{a}}$ for $i=1$, ..., $a-1$. Then $0<\lambda_1<\lambda_2<...<\lambda_{a-1}<r$.
	Assume that $\ru{\frac{jr}{rm-a}}=\lambda_i$, then we have
	\[\frac{a\lambda_i}{r}-\frac{a}{r}<i<\frac{a\lambda_i}{r}\] and \[\frac{(rm-a)\lambda_i}{r}-\frac{rm-a}{r}<j<\frac{(rm-a)\lambda_i}{r}.\]
	Hence
	\[ m\lambda_i-m=\frac{a\lambda_i}{r}-\frac{a}{r}+\frac{(rm-a)\lambda_i}{r}-\frac{rm-a}{r}<i+j<
		\frac{a\lambda_i}{r}+\frac{(rm-a)\lambda_i}{r}=m\lambda_i.\]
	For a fixed $i$ there are at most $m-1$ many $j$ satisfied this condition, hence there are exactly $m-1$ many $j$ satisfied
	$\ru{\frac{jr}{rm-a}}=\lambda_i$ and so there are exactly $m$ elements in $S$ equal to $\lambda_i$.\par
	One can see that $\ru{\frac{jr}{rm-a}}=1$ if and only if $j\leq m-1$, and $\ru{\frac{jr}{rm-a}}=r$ if and only if 
	\[\frac{jr}{rm-a}>r-1\Leftrightarrow j>rm-a-m+\frac{a}{r}\Leftrightarrow j\geq rm-a-m+1.\]
	This shows that there are exactly $m-1$ elements in $S$ equal to $1$ and $m-1$ elements in $S$ equal to $r$.
	Now there are \[rm-2-(a-1)m-2(m-1)=(r-a-1)m\] many elements in $S$ which do not equal to $1$, $\lambda_i$ or $r$. Note that
	$\{2,...,r-1\}-\{\lambda_i\}_{i=1}^{a-1}$ contains $r-a-1$ many elements and there are at most $m$ elements in
	$\left\lbrace\ru{\frac{jr}{rm-a}}\right\rbrace_{j=1}^{rm-a-1}$ which have the same value. This implies there are exactly $m$ elements
	in $S$ with value $\lambda$ for $\lambda\in\{2,...,r-1\}-\{\lambda_i\}_{i=1}^{a-1}$. 
\end{proof}
Recall that when $r>1$ we have defined 
\begin{pro}\label{sig}
	Given a positive integer $k$.
	\begin{enumerate}[(1)]
	\item If $r=1$ then there are exactly $m-1$ exceptional divisors on $Y$ over $X$. They correspond to the valuations
		$\sigma^k_i(x,y,z,u)=(i,m-i,1,1)$ for $i=1$, ..., $m-1$. 
	\item Assume that $r>1$ and $k\leq r$ (resp. $k=r$). There are exactly $m_k$ (resp. $m_r-1$) many exceptional divisors
		on $Y$ over $X$ which is of discrepancy $\frac{k}{r}$. They correspond to the valuations
		$\sigma^k_i(x,y,z,u)=\frac{1}{r}(\ol{ka}+ir,(m_k-i)r-\ol{ka},k,r)$ for $i=0$, ..., $m_k-1$ (resp. $i=1$, ..., $m_r-1$).
	\item Assume that $r>1$, $k=r+k_0>r$ and $k_0\leq\delta'(X)$. Then there are exactly $m_k-m_{k_0}-1$ many exceptional divisors
		on $Y$ over $X$ which is of discrepancy $\frac{k}{r}$. They correspond to the valuations
		$\tau^k_i(x,y,z,u)=\frac{1}{r}(k_0a+ir,(m_k-i)r-k_0a,k,r)$ for $i=1$, ..., $m_k-m_{k_0}-1$.
	\item If $k_0>\delta(X)$, then there is no exceptional divisor on $Y$ over $X$ which is of discrepancy $\frac{k}{r}$.
	\end{enumerate}
\end{pro}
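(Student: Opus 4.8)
The plan is a single induction mirroring the recursive construction of the Gorenstein resolution; throughout we assume $0<a<r$. \emph{Part (1), the Gorenstein case $r=1$,} is essentially one computation: the Gorenstein resolution starts with the weighted blow-up $g\colon X_1\to X$ of weight $(m-1,1,1,1)$, whose exceptional divisor carries the monomial valuation $(m-1,1,1,1)$ and, by the discrepancy corollary, has discrepancy $((m-1)+1+1+1)-\w_{(m-1,1,1,1)}(xy+f)-1=(m+2)-m-1=1$. The only non-Gorenstein point of $X_1$ lies in $U_x$, where eliminating $y_1$ exhibits the cyclic quotient singularity $\frac1{m-1}(-1,1,1)$; the other singular points of $X_1$ are Gorenstein ($cA$ points or smooth) and are left untouched. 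Applying Proposition \ref{cq} to that point and transporting its valuations through $x=x_1^{m-1}$, $z=z_1x_1$, $u=u_1x_1$, $y=y_1x_1$ (where $v(y_1)=m\,v(z_1)$ on each of them) yields the valuations $(i,m-i,1,1)$, $i=1,\dots,m-2$, each of discrepancy $1$; together with $(m-1,1,1,1)$ these are exactly the $m-1$ divisors claimed.

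For \emph{parts (2)--(4)}, with $r>1$, we induct on $\delta(X)$. The Gorenstein resolution factors through the weighted blow-up $g\colon X_1\to X$ of weight $w=\frac1r(a,mr-a,1,r)$, and over $X_1$ it is the Gorenstein resolution of $X_1$: one resolves the two cyclic quotient points $P_1,P_2$ (the origins of $U_x,U_y$) by Proposition \ref{cq}, and recursively Gorenstein-resolves the $cA/r$ point $P'$ (the origin of $U_u$, with $\delta(P')=\delta(X)-1$), the remaining points of $X_1$ being Gorenstein and untouched; when $\delta(X)=1$ no point $P'$ occurs and the recursion stops. So the exceptional divisors of $Y$ over $X$ split into: the divisor $E=\mathrm{exc}(g)$, carrying the valuation $\frac1r(a,mr-a,1,r)=\sigma^1_0$ with discrepancy $\frac{a+(mr-a)+1+r}{r}-\w_w(xy-f)-1=\frac1r$ (discrepancy corollary, $\w_w(xy-f)=m$); the divisors over $P_1$ and $P_2$, governed by Lemmas \ref{v1} and \ref{mdiv}; and the divisors over $P'$, which by the inductive hypothesis are the ${\sigma'}^{k'}_i,{\tau'}^{k'}_i$ of the Proposition applied to $(P'\in X_1)$.

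Two translations then finish the argument. First, a short index manipulation rewrites each $v^1_i,v^2_j$ of Lemma \ref{v1} in the form $\sigma^k_i$; by Lemma \ref{mdiv} the points $P_1,P_2$ together contribute exactly $m-1$, $m$, or $m-1$ divisors of discrepancy $\frac kr$ according as $k=1$, $1<k<r$, or $k=r$, all of discrepancy $\le1$ over $X$. Second, transporting the divisors over $P'$ through $x=x'u^{\frac ar}$, $y=y'u^{\frac{mr-a}{r}}$, $z=z'u^{\frac1r}$, $u=u$ — which raises discrepancies by $\frac1r$, since $E|_{U_u}=(u=0)$ and $v(u)=1$ on each such divisor — and using Lemma \ref{addm} ($m_k(f)=m'_{k-1}(f')+m$), one checks that ${\sigma'}^{k'}_i\mapsto\sigma^{k'+1}_{i+\epsilon}$ and ${\tau'}^{k'}_i\mapsto\tau^{k'+1}_{i+\epsilon}$, where $\epsilon=1$ precisely when $\ol{k'a}+a\ge r$ (so $\ol{(k'+1)a}=\ol{k'a}+a-r$) and $\epsilon=0$ otherwise. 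Assembling the three sources: for $k\le r$, $E$ yields a $\sigma^k_i$ only when $k=1$, $P_1,P_2$ yield $m-1$/$m$/$m-1$ of them, and $P'$ yields $m'_{k-1}=m_k-m$ of them when $k\ge2$, for totals $m_1$, $m_k$, $m_r-1$; for $k=r+k_0>r$, $E,P_1,P_2$ yield none, while $P'$ yields the divisors of discrepancy $\frac{k-1}{r}$ over $X_1$, of which there are $m'_{k-1}-m'_{k_0-1}-1=m_k-m_{k_0}-1$ (the case $k_0=1$ read off part (2) for $P'$, where $m'_r-1=m_{r+1}-m-1$); and once $k_0$ exceeds the recursion depth, part (4) for $P'$ yields none.

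The main obstacle is the index bookkeeping behind these two translations: one must verify not merely that the counts add up but that the $i$-ranges glue seamlessly — that $\sigma^1_0=E$ supplies exactly the index $i=0$ that $P'$ omits, that the shift $i\mapsto i+\epsilon$ carries $0\le i\le m'_{k'}-1$ onto precisely the gap $P_1,P_2$ leave inside $0\le i\le m_k-1$ (which forces $\epsilon=1$ at the top index $k'=r-1$, via $\ol{(r-1)a}=r-a$), and that the degeneration of $m_k-m_{k_0}-1$ at $k_0=\delta(X)$, where $m_{r+k_0}=m_{k_0}$, matches part (4). This is where Lemmas \ref{v1}, \ref{mdiv}, and \ref{addm} must be fitted together with the most care.
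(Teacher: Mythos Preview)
Your argument is correct and follows the same inductive skeleton as the paper: split the exceptional divisors of $Y$ into the first exceptional divisor $E$, the divisors over the two cyclic quotient points $P_1,P_2$ (handled via Lemmas \ref{v1} and \ref{mdiv}), and the divisors over the $cA/r$ point $P'$ (handled by induction on $\delta(X)$, using Lemma \ref{addm} to pass between $m_k$ and $m'_{k-1}$). The paper does exactly this.

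Where the two writeups differ is in economy. You undertake the explicit index bookkeeping, determining for each source which values of $i$ in $\sigma^k_i$ are produced and checking that the ranges glue. The paper sidesteps this entirely: for parts (1)--(2) it only verifies the two observations (i) the total count of discrepancy-$\frac{k}{r}$ divisors is $m_k$ (resp.\ $m_r-1$), and (ii) each such divisor has $v_E(z,u)=(\frac{k}{r},1)$ and $rv_E(x)\equiv ka\pmod r$. Since the valuations are monomial in $x,y,z,u$, one has $v_E(x)+v_E(y)=\w_{w_k}f=m_k$, so (ii) forces $v_E=\sigma^k_i$ for some $i$; distinct divisors giving distinct valuations plus the count then pins down the full list without ever asking which $i$ came from $P_1$, $P_2$, or $P'$. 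This is cleaner and avoids what you flag as the main obstacle. For part (3) the paper unrolls the whole chain $X_{k_0}\to\cdots\to X$ at once rather than a single inductive step, but this is cosmetic.

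One small slip in your transport formula: for the $\tau$ family the index shift is always $\epsilon=0$, since $\tau^k_i$ is written with $k_0a$ rather than $\ol{ka}$, and $(k'_0+1)a=k_0a$ on the nose; your uniform rule ``$\epsilon=1$ when $\ol{k'a}+a\ge r$'' applies only to the $\sigma$ family. This does not affect the argument.
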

\begin{proof}
	To prove (1) and (2) we only need the following observations:
	\begin{enumerate}[(i)]
	\item The total number of exceptional divisor on $Y$ of discrepancy $\frac{k}{r}$ is $m_k$ (resp. $m_r-1$)
		if $k<r$ (resp. $k=r$).
	\item If $E\subset Y$ is an exceptional divisor of discrepancy $\frac{k}{r}$, then $v_E(z,u)=(\frac{k}{r},1)$ and either
		$rv_E(x)\equiv ka(mod\mbox{ }r)$ or $rv_E(y)\equiv -ka(mod\mbox{ }r)$.
	\end{enumerate}
	When $r=1$ the statement follows from Lemma \ref{v1}. If $r>1$ it is easy to check that (i) and (ii) is true by using Lemma \ref{v1},
	Lemma \ref{addm} and by induction on $\delta'(X)$.\par
	From now on we will assume that $r>1$. First we prove (3). One can construct a sequence of $w$-morphisms
	\[ X_{\delta'(X)}\rightarrow...\rightarrow X_1\rightarrow X_0=X\]
	such that in each step we contract a divisor to a $cA/r$ point $P'_j\in X_j$.
	Note that we have $\delta'(P'_j\in X_j)=\delta'(P'_{j+1}\in X_{j+1})+1$.
	Assume that \[(P'_j\in X_j)\cong (x_jy_j+f_j(z_j,u_j)=0)\subset \A^4_{(x_j,y_j,z_j,u_j)}/\frac{1}{r}(a,-a,1,0).\]
	Let $w_k^{(j)}$ be the weight such that $w_k^{(j)}(z_j,u_j)=(\frac{k}{r},1)$. Define $m^{(j)}_k=\w_{w^{(j)}_k}f_j(z_j,u_j)$.
	By Lemma \ref{addm}, we have $m^{(j)}_k=m^{(j+1)}_{k-1}+m^{(j)}_1$.
	Let $n^{(j)}_k$ be the number of exceptional divisors on $Y$ over $X_j$ which is of discrepancy $\frac{k}{r}$.
	Note that Lemma \ref{v1} implies any exceptional divisor over $P_1$ and $P_2$ has discrepancy less than or equal to one.
	By Lemma \ref{v1} and by induction on $\delta(X)$ one can show that $v_E(u)=1$ for all exceptional divisor $E$ on $Y$ over $X$.
	One can compute that $a(X_j,E)=a(X_{j+1},E)+\frac{1}{r}$ for all $j$. The conclusion is that when $k>r$ and $j<\delta(X)$,
	we have $n^{(j)}_k=n^{(j+1)}_{k-1}$.\par
	Now
	\begin{align*}
	n^{(0)}_k=n^{(1)}_{k-1} =...&=n^{(k_0)}_r\\
	&=m^{(k_0)}_r-1\\&=m^{(k_0-1)}_{r+1}-m^{(k_0-1)}_1-1\\&=m^{(k_0-2)}_{r+2}-m^{(k_0-2)}_1-m^{(k_0-1)}_1-1\\
	&=m^{(k_0-2)}_{r+2}-m^{(k_0-2)}_2-1\\&=...\\&=m^{(0)}_k-m^{(0)}_{k_0}-1
	\end{align*}
	If $E$ is an exceptional divisor of discrepancy $\frac{k}{r}$, then $a(X_j,E)=\frac{k-j}{r}$ for all $j\leq k_0$.
	Hence $a(X_{k_0},E)=1$ and $v_E(x_{k_0},y_{k_0},z_{k_0},u_{k_0})=(i,m^{(k_0)}_r-i,1,1)$ and one can check that
	$v_E(x_j,y_j,z_j,u_j)=\frac{1}{r}((k_0-j)a+ir,(m^{(j)}_{k-j}-i)r-(k_0-j)a,k-j,r)$. Thus
	\[v_E(x,y,z,u)=\frac{1}{r}(k_0a+ir,(m_k-i)r-k_0a,k,1).\]
	Finally if $k_0>\delta(X)$, then $n_k=n^{(\delta(X))}_{k-\delta(X)}=0$ since $k-\delta(X)>r$.
\end{proof}

\section{Nash valuations of terminal singularities of type $cA/r$}\label{sNash}
As before we assume that \[X=(xy-f(z,u)=0)\subset\Cc^4_{(x,y,z,u)}/\frac{1}{r}(a,-a,1,0)\] is a $cA/r$ singularity.
We use the notation in Section \ref{sarc}.
\begin{lem}\label{disz}
	Let \[\Psi:\Oo_X\rightarrow\Cc[[s,t]]\] be a deformation of arcs on $X$. 
	Assume that $\mull{t}\Psi_0(x^r)$, $\mull{t}\Psi_0(y^r)$, $\mull{t}\Psi_0(z^r)$ and $\mull{t}\Psi_0(u)$ are all finite
	and $\mull{t}\Psi_0(u)=\mull{t}\Psi_{\eta}(u)$, then
	\[ v_{\Psi_0}(z)-v_{\Psi_{\eta}}(z)\in \Z.\]
	In particular, if $E$ and $E'$ are two exceptional divisors such that $\pi_f(\pi_Y^{-1}(E))\subset\pi_f(\pi_Y^{-1}(E'))$
	and $a(X,E)=v_E(z)$, $a(X,E')=v_{E'}(z)$ (for example if $E\subset Y$, where $Y$ is the Gorenstein resolution of $X$),
	then $a(X,E)-a(X,E')\in\Z$.
\end{lem}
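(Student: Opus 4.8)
The plan is to extract \emph{both} valuations of $z$ from the single element $R:=\Psi(z^r)\in\Cc[[s,t]]$ and run the comparison inside the factorial ring $\Cc[[s,t]]$; throughout, $v(z)$ abbreviates $\tfrac1r v(z^r)$ since $z\notin\Oo_X$ when $r>1$. First I would reduce. If $r=1$ then $z\in\Oo_X$, so both $v_{\Psi_0}(z),v_{\Psi_\eta}(z)\in\Z$ and there is nothing to prove; so assume $r>1$ and (as always) $0<a<r$, $\gcd(a,r)=1$. Put $q:=\mull{t}\Psi_0(u)=\mull{t}\Psi_\eta(u)$. If $q=0$, the centres of $\Psi_0$ and $\Psi_\eta$ lie off the divisor $(u=0)$, which contains $X_{sing}$; hence both centres lie in the smooth locus of $X$, over which $\tl X=(xy-f(z,u)=0)\to X$ is étale (its branch locus is the origin), so the arcs lift to $\tl X$ without ramifying $t$ and $v_{\Psi_0}(z),v_{\Psi_\eta}(z)\in\Z$. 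Thus I may assume $q\geq1$.

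\emph{Set-up.} Write $f(z,u)=g(z^r,u)$ with $g=\sum_{i,j}a_{ij}v^iw^j\in\Cc[v,w]$, $g(0,0)=0$; isolatedness gives an $l\geq1$ with $a_{0l}\neq0$. Apply $\Psi$ to $G$-invariant monomials and set $P=\Psi(x^r)$, $Q=\Psi(y^r)$, $R=\Psi(z^r)$, $U=\Psi(u)$, $S=\Psi(xz^{r-a})$, $T=\Psi(yz^a)$ in $\Cc[[s,t]]$; since $\Psi$ is a ring homomorphism one has $S^r=PR^{r-a}$, $T^r=QR^{a}$, $ST=R\,g(R,U)$, $\Psi(xy)=g(R,U)$ and $\Psi(xy)^r=PQ$. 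The finiteness hypotheses say $(s)\nmid P,Q,R,U$, hence also $(s)\nmid S,T$; and $\mull{t}\Psi_0(u)=\mull{t}\Psi_\eta(u)=q$ forces $U=t^qW$ with $W\in\Cc[[s,t]]^\times$. Now $v_{\Psi_0}(z^r)=\mull{t}(R|_{s=0})$ while $v_{\Psi_\eta}(z^r)=\mull{t}R$ computed in $\Cc((s))[[t]]$, i.e.\ the largest power of $t$ dividing $R$ in $\Cc[[s,t]]$. Factoring $R=(\mathrm{unit})\cdot t^{e}\prod_\pi\pi^{c_\pi}$ over its prime divisors $\pi\neq(s),(t)$, and noting (Weierstrass preparation, since $\pi\in(s,t)$) that $\pi|_{s=0}=t^{\deg_t\pi}$ up to a unit, I get
\[ r\bigl(v_{\Psi_0}(z)-v_{\Psi_\eta}(z)\bigr)=\mull{t}(R|_{s=0})-e=\sum_{\pi}c_\pi\,\deg_t(\pi), \]
so it suffices to prove $r\mid c_\pi$ for each prime divisor $\pi\neq(s),(t)$ of $R$.

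\emph{Key step.} Fix such a $\pi$, so $c_\pi=\mathrm{ord}_\pi R\geq1$. I first claim $\pi\nmid g(R,U)$. In $g(R,U)=\sum_{i\geq0}\bigl(\sum_j a_{ij}U^j\bigr)R^i$ the $i=0$ summand is $U^l\bigl(a_{0l}+a_{0,l+1}U+\cdots\bigr)$, whose bracket is a unit because $U(0,0)=0$ (here $q\geq1$ is used); since $\mathrm{ord}_\pi U=0$ this summand is a $\pi$-unit, while every summand with $i\geq1$ is divisible by $R$, hence by $\pi$. Therefore $\mathrm{ord}_\pi S+\mathrm{ord}_\pi T=\mathrm{ord}_\pi(ST)=\mathrm{ord}_\pi R+\mathrm{ord}_\pi g(R,U)=c_\pi$. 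On the other hand $S^r=PR^{r-a}$ with $P\in\Cc[[s,t]]$ gives $r\,\mathrm{ord}_\pi S\geq(r-a)c_\pi$, and likewise $r\,\mathrm{ord}_\pi T\geq ac_\pi$, so $\mathrm{ord}_\pi S\geq\ru{(r-a)c_\pi/r}$ and $\mathrm{ord}_\pi T\geq\ru{ac_\pi/r}$. If $r\nmid c_\pi$, then $\gcd(a,r)=1$ makes both $(r-a)c_\pi/r$ and $ac_\pi/r$ non-integral, so $\ru{(r-a)c_\pi/r}+\ru{ac_\pi/r}=c_\pi+1$, contradicting the displayed equality. Hence $r\mid c_\pi$, the sum above lies in $r\Z$, and $v_{\Psi_0}(z)-v_{\Psi_\eta}(z)\in\Z$. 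For the ``in particular'': each exceptional $E$ on the Gorenstein resolution $Y$ has $v_E(u)=1$ and $a(X,E)=v_E(z)$, so if $\pi_f(\pi_Y^{-1}(E))\subseteq\pi_f(\pi_Y^{-1}(E'))$ then Corollary \ref{csel} yields a $\Cc$-valued deformation $\Psi$ whose two valuations are those of $E'$ and $E$, the hypotheses are met because $v_{E'}(u)=v_E(u)=1$, and $a(X,E')-a(X,E)=v_{E'}(z)-v_E(z)\in\Z$.

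\emph{Expected main obstacle.} The crux is the divisibility $r\mid c_\pi$: one has to recognise that it is exactly the defining equation $xy=f(z,u)$ — exploited through the two auxiliary invariants $xz^{r-a}$, $yz^a$ together with the ceiling inequality above — that prevents $\mathrm{ord}_\pi R$ from being non-divisible by $r$, and that the hypothesis $\mull{t}\Psi_0(u)=\mull{t}\Psi_\eta(u)\geq1$ is precisely what turns the $w^0$-part of $g$ into a $\pi$-unit. Once this is isolated, the two reductions and the translation of the finiteness hypotheses into non-divisibilities in $\Cc[[s,t]]$ are bookkeeping.
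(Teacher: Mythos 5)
Your proof is correct, and its engine is the same as the paper's: show that every ``moving'' factor of $\Psi(z^r)$ occurs with multiplicity divisible by $r$, by playing the invariants $xz^{r-a}$ and $yz^a$, the relation $xy=f(z,u)=g(z^r,u)$, and the hypothesis that $\mull{t}\Psi(u)$ does not jump, against each other. The implementation is genuinely different, though. The paper invokes Newton's Lemma of Johnson--Koll\'ar to factor $\Psi_n(x^r)$ and $\Psi_n(z^r)$ into linear factors $(t-\sigma(s))$ after a ramified base change $s\mapsto s^n$, and derives the contradiction by forcing a factor $t-\sigma_i(s)$ into $\Psi_n(u)$; you stay in the UFD $\Cc[[s,t]]$ with arbitrary prime divisors $\pi$ of $R=\Psi(z^r)$, use Weierstrass preparation only to translate $\mathrm{ord}_\pi$-multiplicities into jumps of $\mull{t}$ at $s=0$, and close the argument numerically via the exact count $\mathrm{ord}_\pi S+\mathrm{ord}_\pi T=c_\pi$ against $\ru{(r-a)c_\pi/r}+\ru{ac_\pi/r}=c_\pi+1$ when $r\nmid c_\pi$. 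What your route buys: no base change, and explicit treatment of two points the paper leaves tacit --- the case $\mull{t}\Psi_0(u)=0$ (where the ``$u$-pure part of $g$ is a $\pi$-unit'' step would break down, and which you dispose of by lifting both arcs through the cover, \'etale away from the origin) and the verification that primes $\pi\neq(s),(t)$ become units in $\Cc((s))[[t]]$, so that $e$ really computes $v_{\Psi_\eta}(z^r)$. The ``in particular'' clause is treated at the same (terse) level as in the paper, where one still has to grant that the generic arc produced by Corollary \ref{csel} computes $v_{E'}$ on $z^r$; that gap, such as it is, is the paper's as much as yours.
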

\begin{proof}
	Note that $\mull{t}\Psi_0(x^r)$, $\mull{t}\Psi_0(y^r)$ and $\mull{t}\Psi_0(z^r)$ are finite implies $\mull{t}\Psi_0(xz^{r-a})$,
	$\mull{t}\Psi_0(yz^a)$ and $\mull{t}\Psi_0(xy)=\mull{t}\Psi_0(f(z,u))$ are all finite. For a fixed integer $n$, define
	\[ \xymatrix@R=2pt{\Psi_n:\Oo_X \ar[r]^{\Psi} & \Cc[[s,t]]\ar[r] &\Cc[[s,t]] \\ & s\ar@{|->}[r] & {s}^n}.\]
	By Newton's Lemma \cite[Lemma 7]{jk}, there exists $n$ such that
	\[\Psi_n(x^r)=\xi(s,t)t^{\alpha}\prod_i(t-\sigma_i(s))^{\lambda_i};\quad
		\Psi_n(z^r)=\eta(s,t)t^{\gamma}\prod_j(t-\tau_j(s))^{\mu_j}\]
	such that $\xi(0,0)$ and $\eta(0,0)\neq0$, $\sigma_i(0)=\tau_j(0)=0$ but $\sigma_i(s)$ and $\tau_j(s)$ are not identically zero.
	We may assume that similar factorizations exist for $\Psi_n(y^r)$ and $\Psi_n(xy)$.\par
	We show that $r$ divide $\mu_j$ for all $j$, which implies
	\[v_{\Psi_0}(z^r)-v_{\Psi_{\eta}}(z^r)=\left(\gamma+\sum_j\mu_j\right)-\gamma \in r\Z,\]
	hence $v_{\Psi_0}(z)-v_{\Psi_{\eta}}(z)\in \Z$.
	Indeed, since $\Psi_n(x^r)\Psi_n(z^r)^{r-a}=\Psi_n(xz^{r-a})^r$,
	if $r$ do not divide $\mu_j$, then there exists $i$ such that $\sigma_i(s)=\tau_j(s)$ and $r$ do not divide $\lambda_i$.
	We have $t-\sigma_i(s)$ divides \[\Psi_n(x^r)\Psi_n(y^r)=\Psi_n(xy)^r=\Psi_n(f(z,u))^r,\]
	Hence $t-\sigma_i(s)$ divides $\Psi_n(f(z,u))$.
	Since $t-\sigma_i(s)=t-\tau_j(s)$ divides $\Psi_n(z^r)$ and $z$ do not divide $f(z,u)$ because $X$ has isolated singularities,
	we have $t-\sigma_i(s)$ divide $\Psi_n(u)$. However it is impossible since $\mull{t}\Psi_0(u)=\mull{t}\Psi_{\eta}(u)$.\par
	Now the last statement follows from Corollary \ref{csel} and Proposition \ref{sig}.
\end{proof}
\begin{lem}\label{subn}
	Let $E$ be an exceptional divisor over $X$ such that 
	\begin{enumerate}
	\item $v_E(z)=a(X,E)=\frac{k}{r}>1$.
	\item $v_E(x)=\frac{i}{r}$ with $i\equiv ka\mbox{ (mod $r$)}$.
	\item $v_E(x)+v_E(y)=rm_k$.
	\item $v_E(u)=1$.
	\end{enumerate}		
	Then $v_E$ is not a Nash valuation.
\end{lem}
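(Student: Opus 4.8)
The plan is to produce an exceptional divisor $E'$ over $X$ with $v_{E'}\neq v_E$ together with a $K$-valued wedge $\Psi:\spc K[[s]]\to Arc(X)$ (for a suitable field extension $K/\Cc$) such that $\Psi_0$ is the generic point of $\pi_f(\pi_Y^{-1}(E))$ and $\Psi_{\eta}$ is an arc through $X_{sing}$ with $v_{\Psi_{\eta}}=v_{E'}$. Granting this, $\ol{\pi_f(\pi_Y^{-1}(E))}=\ol{\{\Psi_0\}}\subseteq\ol{\{\Psi_{\eta}\}}\subseteq\pi_X^{-1}(X_{sing})$, the inclusion being strict because $v_{\Psi_{\eta}}=v_{E'}\neq v_E=v_{\Psi_0}$; hence $\ol{\pi_f(\pi_Y^{-1}(E))}$ is not an irreducible component of $\pi_X^{-1}(X_{sing})$ and $v_E$ is not Nash. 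Since $a(X,E)=\tfrac{k}{r}>1$ we have $k>r$, and since the construction keeps $v_{\Psi_\eta}(u)=v_E(u)=1$, Lemma \ref{disz} shows that the only admissible target for such a deformation is $v_{\Psi_\eta}(z)=\tfrac{k-r}{r}$. Accordingly one may take for $E'$ the divisor of Proposition \ref{sig} with $v_{E'}(z,u)=(\tfrac{k-r}{r},1)$, $rv_{E'}(x)\equiv(k-r)a\pmod r$ and $v_{E'}(x)+v_{E'}(y)=\w_{w_{k-r}}f$ (which exists on $Y$ because $E$ does), although for the conclusion only $v_{\Psi_\eta}(z)=\tfrac{k-r}{r}\neq\tfrac{k}{r}$ is actually needed.

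To build $\Psi$ I pass to the index-$r$ cyclic cover $\widetilde{X}=(xy=f(z,u))\subset\Cc^4$; an arc of $X$ through the singular point corresponds to a $\mu_r$-equivariant arc $\spc\Cc[[\theta]]\to\widetilde{X}$ with $\theta^r=t$, lying in the sector fixed by the class modulo $r$ of the $\theta$-order of $z$, and conditions (1)--(4) amount to saying that $v_E$ is realized by a sector-$\ol{k}$ such arc of minimal $u$-order. Represent $\Psi_0$ by such an arc with sufficiently general coefficients: $z(\theta)=z_k\theta^k+\cdots$, $u(\theta)=u_r\theta^r+\cdots$ with $z_k,u_r\neq0$, with $x(\theta),y(\theta)$ of $\theta$-orders $rv_E(x),rv_E(y)$, and $x(\theta)y(\theta)=f(z(\theta),u(\theta))$. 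Now deform by $z_s(\theta)=z(\theta)+s\,\theta^{\,k-r}$ (legitimate, as $k-r>0$ and $k-r\equiv k\pmod r$) and $u_s(\theta)=u(\theta)$, and choose $x_s(\theta),y_s(\theta)\in K[[s,\theta]]$ in the correct sectors, equal to $x(\theta),y(\theta)$ at $s=0$, with strictly positive $\theta$-orders which for $s\neq0$ drop so as to sum to $r\,\w_{w_{k-r}}f$. It remains to secure $x_s(\theta)y_s(\theta)=f(z_s(\theta),u_s(\theta))$; fixing $x_s,y_s,z_s$ this is one equation $f(z_s(\theta),U)=x_s(\theta)y_s(\theta)$ for $U\in K[[s,\theta]]$ with $U|_{s=0}=u(\theta)$, solvable by a Newton/Hensel argument because $\partial_u f(z(\theta),u(\theta))\not\equiv0$ (as $z\nmid f$, so $f$ contains a pure power of $u$). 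One checks that $\mu_r$-equivariance is preserved, so $\Psi$ descends to a wedge on $X$; that $x_s,y_s,z_s,u_s$ all vanish at $\theta=0$ for every $s$ --- the point being that $u_s(s,0)$ is a root of $f(0,U)=0$ that equals $0$ at $s=0$, hence identically --- so each fibre is an arc through $X_{sing}=\{0\}$; and that $v_{\Psi_\eta}(z)=\tfrac{k-r}{r}$, $v_{\Psi_\eta}(u)=1$, with $v_{\Psi_\eta}(x),v_{\Psi_\eta}(y)$ as prescribed, so $\Psi_\eta$ represents $v_{E'}$.

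The main difficulty is exactly this construction of $\Psi$: solving $x_sy_s=f(z_s,u_s)$ \emph{while} keeping the $\theta$-orders of $x_s,y_s$ positive and genuinely lowering the $z$-valuation to $\tfrac{k-r}{r}$. The subtle point is that $\partial_u f(z(\theta),u(\theta))$ may itself vanish to positive $\theta$-order (when $f$ has no term linear in $u$), so the $s$-adic perturbation must be weighted correctly; this is where Newton's lemma (already used for Lemma \ref{disz}, cf.\ \cite[Lemma 7]{jk}) enters, after which checking that the lowered $\theta$-orders of $x_s,y_s$ lie in the admissible ranges of Proposition \ref{sig} is routine arithmetic. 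Alternatively one can induct on $k$: the weighted blow-up $g:X_1\to X$ of weight $\tfrac{1}{r}(a,rm-a,1,r)$ carries $E$ to a divisor over the $cA/r$ point $P'\in U_u$ which, by Lemma \ref{addm} and $v_E(u)=1$, again satisfies (1)--(4) with $k$ replaced by $k-1$ as long as $k-1>r$; a strict inclusion over $X_1$ pushes forward along $\pi_g$ to one over $X$, reducing the statement to the base case $k=r+1$, which is again the wedge above with $k-r=1$.
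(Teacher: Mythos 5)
Your global strategy is the same as the paper's: starting from a general arc realizing $v_E$, build a wedge that keeps $\mull{t}u$ fixed and lowers the order of $z$ from $\frac{k}{r}$ to $\frac{k-r}{r}$ (indeed the paper uses exactly the perturbation $z_s(t)=t^{\frac{k}{r}-1}(\gamma(t)t+s)$, which is your $z(\theta)+s\theta^{k-r}$ up to a unit), then conclude that $\ol{\pi_f(\pi_Y^{-1}(E))}$ is not a component of $\pi_X^{-1}(X_{sing})$. The gap is in how you force the deformed quadruple to satisfy $xy=f(z,u)$. You fix $x_s,y_s,z_s$ with prescribed $\theta$-orders and propose to solve $f(z_s(\theta),U)=x_s(\theta)y_s(\theta)$ for $U$, ``by a Newton/Hensel argument because $\partial_uf(z(\theta),u(\theta))\not\equiv0$.'' Non-vanishing of $\partial_uf$ as a power series is not enough: Hensel applies only where the derivative is a unit, and here it vanishes to positive $\theta$-order. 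The degenerate version (Tougeron) requires the error $x_sy_s-f(z_s,u)$ to lie in $(\partial_uf(z_s,u))^2\cdot(s)$, a condition you cannot arrange for a freely chosen product $x_sy_s$ with the dropped orders you want. Concretely, if $f(z,u)=z^N+u^M$ your equation demands that $x_s(\theta)y_s(\theta)-z_s(\theta)^N$ be an $M$-th power in $K[[s,\theta]]$, which fails for generic choices; and even when a solution $U$ exists nothing in the argument keeps $\mull{\theta}U(s,\theta)=r$ for $s\neq0$, which you need for $v_{\Psi_\eta}(u)=1$.

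The paper resolves exactly this difficulty by inverting the roles of the variables: it keeps $u_s(t)=\mu(u)$ fixed, forms $F(s,t)=f(z_s(t),u_s(t))$, and applies Newton's lemma \cite[Lemma 7]{jk} to factor $F(s^n,t)=\phi(s,t)\,t^{m_{k-r}}\prod_{l=1}^{m_k-m_{k-r}}(t-\sigma_l(s))$; it then \emph{defines} $x_s$ and $y_s$ by distributing the unit, the power $t^{m_{k-r}}$ (split as $t^{i'/r}\cdot t^{j'/r}$ with $i'\equiv ka\bmod r$, $i'+j'=rm_{k-r}$, $i'\le i$, $j'\le j$) and the linear factors between them, so that $x_sy_s=f(z_s,u_s)$ holds by construction and the generic orders drop to $\frac{i'}{r}$ and $\frac{j'}{r}$. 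Your proof needs to be reorganized along these lines (solve for the pair $(x_s,y_s)$ given $z_s,u_s$, not for $u_s$ given the rest); as written, the central equation of the wedge is not shown to be satisfiable. Your closing inductive alternative via the weighted blow-up $\frac{1}{r}(a,rm-a,1,r)$ is a reasonable reduction of $k$, but it still terminates at a base case requiring the same wedge construction, so it does not circumvent the gap.
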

\begin{proof}
	Let $\mu:\Oo_X\rightarrow \Cc[[t]]$ be an arc such that $v_{\mu}=v_E$.
	We may write $\mu(x^r)=\alpha(t)^rt^i$, $\mu(y^r)=\beta(t)^rt^j$ and $\mu(z^r)=\gamma(t)^rt^k$, where $\alpha(t)$, $\beta(t)$
	and $\gamma(t)$ are units. Note that we have $i\equiv ka\mbox{ (mod $r$)}$ and $i+j=m_k$.
	We define $z_s(t)=t^{\frac{k}{r}-1}(\gamma(t)t+s)\in\Cc[[s,t^{\frac{1}{r}}]]$, $u_s(t)=\mu(u)$ and
	$F(s,t)=f(z_s(t),u_s(t))\in\Cc[[s,t]]$. By Newton's Lemma \cite[Lemma 7]{jk} there exists an integer $n$ and a factorization
	\[F(s^n,t)=\phi(s,t)t^{m_{k-r}}\prod_{l=1}^{m_k-m_{k-r}}(t-\sigma_l(s)),\quad \phi(0,0)\neq0,\quad \sigma_l(0)=0
		\mbox{ but }\sigma(s)\not\equiv0.\]
	We can choose $i'$ and $j'$ satisfying $i'\equiv ka\mbox{ (mod $r$)}$, $i'+j'=rm_{k-r}$ and $i'\leq i$, $j'\leq j$.
	Define \[x_s(t)=\alpha(t)t^{\frac{i'}{r}}\prod_{l=1}^{\frac{i-i'}{r}}(t-\sigma_l(s))\in\Cc[[s,t^{\frac{1}{r}}]]\]
	and\[ y_s(t)=\alpha(t)^{-1}t^{\frac{j'}{r}}\prod_{l=\frac{i-i'}{r}+1}^{m_k-m_{k-r}}(t-\sigma_l(s))\in\Cc[[s,t^{\frac{1}{r}}]].\]
	Now we can define a deformation of arcs $\Psi:\Oo_X\rightarrow\Cc[[s,t]]$ as follows: given $x^py^qz^vu^w\in\Oo_X$, define
	$\Psi(x^py^qz^vu^w)=x_s(t)^py_s(t)^qz_s(t)^vu_s(t)^w$. It is easy to see that $v_{\Psi_0}=v_E$. Thus $E$ is not a Nash valuation.
\end{proof}
Combining the two above lemmas one may conclude the following.
\begin{pro}\label{nashcar}
	Let $Y\rightarrow X$ be the Gorenstein resolution of $X$.	
	Assume that $E\subset Y$ is an exceptional divisor over $X$. Then $E$ corresponds to a Nash valuation of $X$
	if and only if $a(X,E)\leq 1$.
\end{pro}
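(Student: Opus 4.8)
The plan is to prove the two directions separately, using the machinery assembled in Section \ref{sGor} and in the two preceding lemmas. For the ``only if'' direction I would argue by contraposition: suppose $E\subset Y$ is an exceptional divisor over $X$ with $a(X,E)>1$. By Proposition \ref{sig}, every exceptional divisor on $Y$ of discrepancy $>1$ is (up to the symmetry $x\leftrightarrow y$) one of the divisors $\tau^k_i$ appearing there, with $k=r+k_0>r$, and the explicit formula $\tau^k_i(x,y,z,u)=\frac{1}{r}(k_0a+ir,(m_k-i)r-k_0a,k,r)$ shows that $E$ satisfies all four hypotheses of Lemma \ref{subn}: indeed $v_E(z)=a(X,E)=k/r>1$, $v_E(x)=(k_0a+ir)/r$ with $k_0a+ir\equiv k_0a\equiv ka\pmod r$ (since $k-k_0=r$), the sum $v_E(x)+v_E(y)$ telescopes to $m_k$ times $r$, and $v_E(u)=1$ — the last fact being exactly the observation, noted inside the proof of Proposition \ref{sig}, that $v_E(u)=1$ for every exceptional divisor on the Gorenstein resolution. (When $r=1$ there is no such $E$ at all, since Proposition \ref{sig}(1) gives only divisors of discrepancy $\leq 1$, so the statement is vacuously true on that side.) Hence by Lemma \ref{subn} the valuation $v_E$ is not Nash, which is the contrapositive of what we want.

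For the ``if'' direction, suppose $a(X,E)\leq 1$; I must show $v_E$ is a Nash valuation, i.e.\ that $v_E=v_{Z}$ for some irreducible component $Z$ of $\pi_X^{-1}(X_{sing})$. Since $Y\to X$ is a resolution, each component $Z$ of $\pi_X^{-1}(X_{sing})$ equals $\overline{\pi_f(\pi_Y^{-1}(E'))}$ for a unique exceptional $E'\subset Y$, and the maximal such sets under inclusion correspond precisely to the Nash valuations; so it suffices to show that if $a(X,E)\leq1$ then $\pi_f(\pi_Y^{-1}(E))$ is \emph{not} strictly contained in $\overline{\pi_f(\pi_Y^{-1}(E'))}$ for any other exceptional $E'$. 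Suppose for contradiction that $\pi_f(\pi_Y^{-1}(E))\subsetneq\overline{\pi_f(\pi_Y^{-1}(E'))}$. By the curve selection lemma in the form of Corollary \ref{csel}, there is a $\Cc$-valued wedge $\Psi$ with $v_{\Psi_0}=v_E$ and $v_{\Psi_\eta}=v_{E'}$ realizing this specialization, and with $v_{\Psi_0}(x_i)=v_E(x_i)$ on the coordinate functions. Now apply Lemma \ref{disz}: the finiteness hypotheses on $\mull{t}\Psi_0$ of $x^r,y^r,z^r,u$ hold because $v_E$ is a genuine divisorial valuation centered at the singular point, and one checks $\mull{t}\Psi_0(u)=\mull{t}\Psi_\eta(u)$ from the fact (again from the proof of Proposition \ref{sig}) that $v_E(u)=v_{E'}(u)=1$ for divisors on the Gorenstein resolution. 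Lemma \ref{disz} then gives $a(X,E)-a(X,E')\in\Z$; but $0<a(X,E)\leq1$ and $a(X,E')>0$, and $E\neq E'$ forces $a(X,E)\neq a(X,E')$, so the integrality forces $a(X,E')>a(X,E)$, hence $a(X,E)<1$ is impossible unless $E'$ itself has discrepancy $\leq1$ — here I would need to run the comparison more carefully, distinguishing $a(X,E)<1$ from $a(X,E)=1$, and in the borderline case use that $v_E(z)=a(X,E)$ together with the explicit list in Proposition \ref{sig}(2) to rule out a dominating $E'$.

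The main obstacle I anticipate is exactly this last bookkeeping: turning the congruence $a(X,E)-a(X,E')\in\Z$ from Lemma \ref{disz} into a genuine contradiction with the assumed strict inclusion. The inclusion forces $v_E\leq v_{E'}$ as valuations (specialization decreases the valuation), hence in particular $a(X,E)=v_E(z)\leq v_{E'}(z)=a(X,E')$ when both discrepancies equal the $z$-valuation; combined with integrality of the difference and $a(X,E)\leq1$, one should be able to show that the only way to have $v_E$ dominated is if $a(X,E')>1$ — but then $E'$ is on the list of Proposition \ref{sig}(3), hence by Lemma \ref{subn} it is itself non-Nash, and one must then chase the specialization one more step down to a divisor of discrepancy $\leq1$, obtaining a contradiction with the assumed maximality. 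I would organize this as: among all exceptional divisors $E'$ with $\pi_f(\pi_Y^{-1}(E))\subseteq\overline{\pi_f(\pi_Y^{-1}(E'))}$, pick one maximal such; by the discrepancy-$\leq1$ criterion just derived it has $a(X,E')\leq1$; if $E'\neq E$ then by Lemma \ref{disz} applied to $E\subseteq E'$ we get $a(X,E)-a(X,E')\in\Z$ with both strictly between $0$ and $1$ (or equal to $1$), forcing $a(X,E)=a(X,E')$, and then the explicit coordinate formulas of Proposition \ref{sig}(2) together with $v_E\leq v_{E'}$ force $v_E=v_{E'}$, i.e.\ $E=E'$, the desired contradiction. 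All the pieces are present in the excerpt; the work is in assembling them cleanly.
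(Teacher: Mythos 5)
Your proposal is correct and matches the argument the paper intends (the paper offers no written proof beyond ``combining the two above lemmas''): Proposition \ref{sig} together with Lemma \ref{subn} disposes of the divisors with $a(X,E)>1$, while Corollary \ref{csel}, Lemma \ref{disz}, the non-Nashness of any dominating divisor of discrepancy $>1$, and the constant sum $v(x)+v(y)=m_k$ in Proposition \ref{sig}(2) rule out strict domination when $a(X,E)\le 1$, exactly as you organize it in your final paragraph. One small sign slip: the generic point of the smaller set $\overline{\pi_f(\pi_Y^{-1}(E))}$ is a specialization of arcs in the larger set, so upper semicontinuity gives $v_E\ge v_{E'}$, not $v_E\le v_{E'}$ (and likewise your passing claim that $E\neq E'$ forces $a(X,E)\neq a(X,E')$ is false) --- both are harmless here, since the constant-sum comparison forces $v_E=v_{E'}$ with either direction of the inequality.
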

\section{Essential valuations of terminal singularities of type $cA/r$}\label{sEss}
Assume that \[X=(xy-f(z,u)=0)\subset\Cc^4_{(x,y,z,u)}/\frac{1}{r}(a,-a,1,0)\] is a $cA/r$ singularity and $Y\rightarrow X$ is the Gorenstein resolution of $X$.
\subsection{General situation}
\begin{lem}\label{es1}
	Let $E$ be an exceptional divisor over $X$.
	Assume that \begin{enumerate}[(i)]
		\item $v_E(z)=\frac{k}{r}>1$ and $v_E(u)=1$.
		\item $v_E(x)<m_{k-r}$ and $rv_E(x)\equiv a\mbox{ mod }r$, or $v_E(y)<m_{k-r}$ and $rv_E(y)\equiv -a\mbox{ mod }r$
	\end{enumerate}
	Then $E$ is not an essential divisor. In particular, if $E\subset Y$ and $a(X,E)>2$, then $E$ is not essential.
\end{lem}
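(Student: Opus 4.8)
The plan is to use de Fernex's method. Every model built below is obtained by a sequence of weighted blow-ups, so its exceptional locus over $X_{sing}$ is a union of prime divisors; hence it suffices to produce \emph{one} resolution $f\colon W\to X$ on which $E$ does not appear as a divisor. On such a $W$ the center of $E$ has codimension $\geq 2$, so it is properly contained in an exceptional prime divisor and is not an irreducible component of $f^{-1}(X_{sing})$, and therefore $E$ is not essential.

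First I would reduce to the first alternative in (ii) using the involution $x\leftrightarrow y$ of $X$, so that $v_E(x)<m_{k-r}$ (the congruence part of (ii) being automatic for a divisorial valuation on $X$, by invariance under the $\frac1r(a,-a,1,0)$-action). Then I would trace the center of $E$ through the first steps of the Gorenstein resolution: since $v_E(z)=\frac kr>1$ and $v_E(u)=1$, after the weighted blow-up of weight $\frac1r(a,rm-a,1,r)$ the center of $E$ is not on the $U_z$-chart, and iterating the weighted blow-ups along the $U_u$-charts as in Section~\ref{sRes}, transforming $m_\bullet$ at each step by Lemma~\ref{addm}, the center descends to the origin $P'$ of the $U_u$-chart of some model $X_j$, a $cA/r$ point whose local invariant is tied to $m_{k-r}$ by Lemma~\ref{addm} and on which $v_E(z)$ has dropped to just above $1$. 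The description of $v_E$ in the coordinates of $X_j$ is the one already computed in the proof of Proposition~\ref{sig}.

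The decisive step is to use $v_E(x)<m_{k-r}$. I would perform one more weighted blow-up of $P'$; among the exceptional divisors over the cyclic-quotient point sitting in the $U_x$-chart (Lemma~\ref{v1}) there is one, $E'$, whose $x$-valuation is governed exactly by $m_{k-r}$, and the strict inequality $v_E(x)<m_{k-r}$ is precisely the numerical statement that the center of $E$ specializes onto (the strict transform of) $E'$ rather than crossing it transversally --- equivalently, the weight one would use to extract $E$ factors through the weight of $E'$. Finishing $X_{j+1}$ to a smooth $W$ by resolving the remaining cyclic-quotient and $cA$ points, which can be arranged without ever extracting $E$ once the center of $E$ lies on $E'$, gives a resolution on which $E$ does not appear; hence $E$ is not essential. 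I expect this containment to be the main obstacle: one must check that, transported through the $j$ weighted blow-ups, the inequality $v_E(x)<m_{k-r}$ is exactly what keeps the center of $E$ inside the $U_x$-chart and off the locus where $E$ itself would be extracted, and that the remaining resolution steps do not disturb this --- the one place where the bookkeeping of Section~\ref{sRes}, Lemma~\ref{addm} and Lemma~\ref{v1} must be combined with care.

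For the final assertion, let $E\subset Y$ with $a(X,E)=\frac kr>2$. By Proposition~\ref{sig} and the construction of $Y$, $v_E(z)=a(X,E)>2>1$ and $v_E(u)=1$, so (i) holds. If $z^{r i_0}u^{j_0}$ is a $w_{k-r}$-minimal monomial of $f$, then $m_k\leq m_{k-r}+i_0 r$ while $m_{k-r}\geq i_0(k-r)$, and since $k-r>r$ this forces $m_k<2m_{k-r}$. By Proposition~\ref{sig}(3) every exceptional divisor of $Y$ of discrepancy $\frac kr$ is a $\tau^k_i$ with $1\leq i\leq m_k-m_{k-r}-1$, and for these the conditions $v_E(x)<m_{k-r}$ and $v_E(y)<m_{k-r}$ read $i<m_{k-r}-\frac{(k-r)a}{r}$ and $i>m_k-m_{k-r}-\frac{(k-r)a}{r}$; when $m_k<2m_{k-r}$ these two ranges leave no gap, so every such $\tau^k_i$ satisfies (ii) and is not essential.
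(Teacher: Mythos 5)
Your opening reduction is sound (one resolution on which the center of $E$ has codimension $\geq 2$ suffices, since for the models in play $f^{-1}(X_{sing})$ is a union of prime exceptional divisors), and your verification of the ``in particular'' clause ($m_k<2m_{k-r}$ when $k>2r$, hence every $\tau^k_i$ falls into one of the two ranges in (ii)) is correct and agrees with the paper's computation. But the core of the lemma --- actually exhibiting a model on which the center of $E$ is not a divisor --- is not proved. You describe following the Gorenstein resolution through the $U_u$-charts, doing ``one more weighted blow-up of $P'$,'' and then asserting that the center of $E$ ``specializes onto the strict transform of $E'$'' and that the remaining steps ``can be arranged without ever extracting $E$.'' None of this is justified, and you yourself flag it as ``the main obstacle.'' Worse, in the case $E\subset Y$ (exactly the case the ``in particular'' clause needs), the model you describe is the Gorenstein resolution completed to a smooth variety, on which $E$ \emph{does} appear as a divisor by the very meaning of $E\subset Y$; so the strategy cannot succeed without deviating from that resolution, and you never specify where or how to deviate. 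The phrase ``the weight one would use to extract $E$ factors through the weight of $E'$'' is not a statement about the dimension of the center of $E$.

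The paper's proof is a single, differently chosen weighted blow-up: with $\alpha=rv_E(x)$, blow up $X$ with weight $w(x,y,z,u)=\frac1r(\alpha,\,m_{k-r}r-\alpha,\,k-r,\,r)$. Hypothesis (ii), i.e.\ $0<v_E(x)<m_{k-r}$ together with the congruence, is precisely what makes all entries positive and the weight admissible; then in the chart $U_u$ one has $v_E(x')=v_E(x)-\frac{\alpha}{r}v_E(u)=0$ while $v_E(z')=\frac kr-\frac{k-r}{r}=1>0$ and $v_E(u)=1>0$, so $Center_{X'}(E)$ lies in the one-dimensional locus $\{z'=u=0\}$ but not in $\{x'=0\}$: it is a curve, hence on any further resolution it is properly contained in an exceptional prime divisor and $E$ is not essential. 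This is the idea your sketch is missing: the hypothesis $v_E(x)<m_{k-r}$ is used to make $v_E$ of a chart coordinate \emph{exactly zero}, which is what forces the center to be positive-dimensional. Two smaller points: the congruence in (ii) cannot be ``automatic'' as you claim, since invariance under $\frac1r(a,-a,1,0)$ gives $rv_E(x)\equiv ka\equiv(k-r)a\pmod r$, not $\equiv a$ (the statement of the lemma appears to carry a typo that your argument silently inherits); and your assertion that the center of $E$ sits at the origin of every $U_u$-chart down to $X_j$ would itself require proof, since $v_E(x_j)$ drops by $\frac ar$ at each step and may cease to be positive.
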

\begin{proof}
	We write $v_E(x,y)=\frac{1}{r}(\alpha,\beta)$ and we may assume $\alpha<m_{k-r}r$ and $\alpha\equiv a\mbox{ mod }r$.
	Let $X'\rightarrow X$ be the weighted blow-up
	with weight $w$ such that $w(x,y,z,u)=\frac{1}{r}(\alpha,m_{k-r}r-\alpha,k-r,r)$. Then the chart $U_u\in X'$
	has only isolated singularities. One can see that $Center_{X'}(E)$ is a curve, hence $E$ can not be essential.\par
	Now assume that $E\subset Y$ and $a(X,E)>2$. We will show that $m_k<2m_{k-r}$. In this case Proposition \ref{sig} implies both (i) and (ii)
	are true, hence $E$ can not be essential.\par
	To see that $m_k<2m_{k-r}$, let $z^iu^j$ be the monomial in $f(z,u)$ such that $\w_{w_{k-r}}(z^iu^j)=m_{k-r}$. We have
	$\frac{k-r}{r}i+j=m_{k-r}$ and
	\[m_k\leq \frac{k}{r}i+j=m_{k-r}+i\leq m_{k-r}+\frac{r}{k-r}m_{k-r}=\left(1+\frac{r}{k-r}\right)m_{k-r}<2m_{k-r},\]
	by noticing that \[i=\frac{r}{k-r}\left(m_{k-r}-j\right)\leq\frac{r}{k-r}m_{k-r}.\]
\end{proof}

\subsection{$Q$-factorial cases}
\begin{lem}\label{es2}
	Assume that $X$ has only $\Q$-factorial singularities. Let $E$ be an exceptional divisor over $X$ such that 
	\begin{enumerate}[(i)]
		\item $1<v_E(z)=a(X,E)\leq 2$.
		\item $v_E(u)=1$.
		\item Both $v_E(x)$ and $v_E(y)\geq m_{k-r}$.
	\end{enumerate}
	Then $E$ is an essential divisor.
\end{lem}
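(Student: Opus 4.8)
The plan is to prove the contrapositive of essentiality, running the argument of Lemma \ref{es1} in reverse. A divisor $E$ over $X$ is non-essential exactly when there is a resolution $f\colon W\to X$ with $\dim center_W(E)\le 1$ (then $center_W(E)$ sits inside, but is not a component of, the divisor $\mathrm{Exc}(f)=f^{-1}(X_{sing})$); I would assume such a $W$ and derive a contradiction with (i)--(iii). First I would make $v_E$ explicit: by (i)--(ii) and Proposition \ref{sig}, $v_E$ is one of the valuations listed there, so by part (3) we may write $E=\tau^k_i$ with $k=r+k_0$, $r<k\le 2r$, whence $v_E(x,y,z,u)=\frac1r(k_0a+ir,\,(m_k-i)r-k_0a,\,k,\,r)$, $a(X,E)=k/r\le 2$, and the identity $v_E(x)+v_E(y)=v_E(f(z,u))=m_k$; hypothesis (iii) then says exactly $m_{k_0}\le v_E(x)$ and $m_{k_0}\le v_E(y)$.

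Following de Fernex's method, the witness $W$ may be taken so that $E$ is ``lost'' through a divisorial contraction $g\colon Y'\to Z$ between $\Q$-factorial terminal threefolds over $X$ with $\mathrm{Exc}(g)=E$ and $C:=center_Z(E)$ of dimension $\le 1$; here the $\Q$-factoriality of $X$ is essential, since it forces the passage from $Y'$ to $Z$ to be an honest divisorial contraction rather than a small contraction or small resolution. Suppose $C$ is a curve. Then $C\subseteq\mathrm{Exc}(Z/X)$, and if $v_E$ were not the blow-up of $C$ the discrepancy formula $a(X,E)=a(Z,E)+\sum_j a(X,F_j)v_E(F_j)$ would give $a(X,E)\ge 2+1/r>2$ (using $a(Z,E)\ge 2$ and $v_E(F_{j_0})\ge 1$ for some exceptional divisor $F_{j_0}\supseteq C$ of $Z\to X$ with $a(X,F_{j_0})\ge 1/r$), contradicting (i). So $v_E$ is the blow-up of $C$, and $C$ is the intersection of exceptional divisors $F_{j_1},\dots,F_{j_p}$ of $Z\to X$ with $\sum_\ell a(X,F_{j_\ell})=k_0/r$ and $v_E\ge\sum_\ell v_{F_{j_\ell}}$. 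Now I would run the computation \emph{dual} to the one in the proof of Lemma \ref{es1}: using the explicit description of the $F_{j_\ell}$ from Proposition \ref{sig} (so $v_{F_{j_\ell}}(x)+v_{F_{j_\ell}}(y)=m_{k_\ell}$ with $\sum k_\ell=k_0$) together with the $\Q$-factoriality constraint on which configurations $\bigcap_\ell F_{j_\ell}$ can occur as such a center, I would show that the resulting curve $C$ is precisely one of the centers produced in Lemma \ref{es1}, forcing $v_E(x)<m_{k_0}$ or $v_E(y)<m_{k_0}$ --- contradicting (iii). The case $C=center_Z(E)$ a point is reduced to this by one more weighted blow-up along $C$: the center of $v_E$ then becomes a curve or a point of strictly larger discrepancy, so induction on $\delta(X)$ (Section \ref{sRes}) finishes it.

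I would also pursue an inductive descent in parallel: passing to the $w$-morphism $X_1\to X$ and the chart $U_u$ about the $cA/r$ point $P'\in X_1$, Lemma \ref{addm} shows that $E$ over $P'\in X_1$ again satisfies (i)--(iii) but with $k_0$ replaced by $k_0-1$; after $k_0$ steps $a(X_{k_0},E)\le 1$, so by Proposition \ref{nashcar} $E$ is a Nash --- hence essential --- divisor over $X_{k_0}$, and it then remains to propagate essentiality back along the chain $X_{k_0}\to\cdots\to X$ of $w$-morphisms.

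The step I expect to be the main obstacle is the reverse computation in the curve case: showing that \emph{every} curve $C$ that can arise as $center_Z(E)$ on a $\Q$-factorial terminal model $Z$, with $v_E$ its blow-up, is accounted for by Lemma \ref{es1} --- in particular handling the highly non-transverse configurations $\bigcap_\ell F_{j_\ell}$, pinning down the exact point where $\Q$-factoriality is used (without it the $\tau^k_i$ are in general non-essential, so the hypothesis must genuinely enter), and treating the boundary cases $k_0=r$, where $a(X,E)=2$ leaves no slack, and the extreme admissible values of $i$. The back-propagation of essentiality in the alternative approach is the other delicate point.
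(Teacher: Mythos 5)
Your argument's crux --- the step that is supposed to contradict hypothesis (iii) --- is exactly the step you flag as ``the main obstacle,'' and it is not supplied, so as written this is a gap rather than a proof. Two further points in your setup are also unjustified. First, the reduction of non-essentiality to a divisorial contraction $g\colon Y'\to Z$ between terminal $\Q$-factorial models with $\mathrm{Exc}(g)=E$: non-essentiality only hands you a resolution on which the center of $E$ has dimension $\le 1$; extracting $E$ itself as the exceptional divisor of a terminal divisorial contraction is an additional claim in the spirit of de Fernex--Docampo that you would have to prove, and the paper does not use it. Second, the proposed ``dual computation'' --- classifying every curve $C$ on every $\Q$-factorial terminal model over $X$ whose blow-up realizes $v_E$, and matching each to a center from Lemma \ref{es1} --- is precisely the missing content, and it is harder than the problem itself. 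Your parallel inductive route also fails at ``propagate essentiality back along the chain'': essentiality does not descend along $w$-morphisms (Proposition \ref{esscar} of the paper exhibits divisors that are essential over intermediate models of the chain but not over $X$), so that is not a fallback.

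For comparison, the paper's mechanism is a single explicit construction rather than a classification. Take any smooth $Z\to X$ on which $\Gamma=\mathrm{Center}_Z(E)$ is not a divisor. $\Q$-factoriality gives purity of $\mathrm{exc}(Z\to X)$, so $\Gamma$ lies on some exceptional divisor $F$; Koll\'ar--Mori 2.29 together with the integrality statement of Lemma \ref{disz} then force $a(X,F)=a(X,E)-1\le 1$, $\mathrm{mult}_\Gamma F=1$ and $\mathrm{codim}_Z\Gamma=2$. Since $a(X,F)\le 1$, Proposition \ref{sig} gives $v_F=\frac1r(i,\,m_{k-r}r-i,\,k-r,\,r)$. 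One then performs the weighted blow-up $X'\to X$ with weight $v_F$ and checks in the chart $U_u$ that the induced rational map $Z\dashrightarrow X'$ is defined at the generic point of $\Gamma$ and contracts $\Gamma$ to the origin of $U_u$ --- this is exactly where (iii) enters, via $v_E(x)\ge m_{k-r}>v_F(x)$ and $v_E(y)\ge m_{k-r}>v_F(y)$, which make the relevant coordinate functions vanish along $\Gamma$ --- while $F$ maps onto the exceptional divisor of $X'$. The fibre over that point is then not pure of codimension one on the $\Q$-factorial variety $X'$, contradicting purity. If you want to salvage your approach, this weighted blow-up argument is what has to replace your unproved classification step.
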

\begin{proof}
	Assume that $E$ is not an essential divisor, then there exists a smooth model $g:Z\rightarrow X$ such that $\Gamma=Center_Z(E)$
	is not a divisor (Note that $exc(Z\rightarrow X)$ is pure of codimension one under the assumption that $X$ is $\Q$-factorial,
	cf. \cite[Lemma 17]{jk}). Let $F\subset Z$ be an exception divisor containing $\Gamma$. We may write $K_Z=g\st K_X+\Delta$. By
	\cite[Lemma 2.29]{km}, we have \[a(X,E)=a(Z,-\Delta,E)\geq codim_Z(\Gamma)-1+a(X,F)\mbox{mult}_{\Gamma} F.\]
	By Lemma \ref{disz} we have $a(X,E)-a(X,F)\in\Z$. Since $a(X,E)\leq 2$, $a(X,F)=a(X,E)-1$.
	Note that we have already assume that $\Gamma$ is not a divisor, hence $codim_Z(\Gamma)\geq 2$. If $\mbox{mult}_{\Gamma} F\geq 2$, then
	\[codim_Z(\Gamma)-1+a(X,F)\mbox{mult}_{\Gamma} F\geq 1+2a(X,E)-2>a(X,E)\]
	since $a(X,E)>1$, which leads a contradiction. Thus $\mbox{mult}_{\Gamma} F=1$ and $codim_Z(\Gamma)=2$. This says that
	$\Gamma$ is a curve and $F$ is smooth along $\Gamma$ generically.\par
	Now we have $a(X,F)\leq 1$, hence $Center_YF$ is a divisor because $Y$ is Gorenstein. By Proposition \ref{sig}
	we have $v_F(x,y,z,u)=\frac{1}{r}(i,m_{k-r}r-i,k-r,r)$ for some positive integer $i<m_{k-r}r$. Let $X'\rightarrow X$ be the weighted blow-up
	with weight $v_F$. We are going to show that the rational map $\mu:Z\dashrightarrow X'$ is well-defined along generic point of
	$\Gamma$ and $Center_{X'}E$ is a point $Q$. Thus $\mu$ contract $\Gamma$ to a point but maps $F$ to the exceptional divisor of
	$X'\rightarrow X$. We have $\mu^{-1}(Q)$ is not pure of codimension one. However, since $X$ is $\Q$-factorial, $X'$ is $\Q$-factorial.
	This contradict to \cite[Lemma 17]{jk}.\par
	To see that $\mu$ is well-defined along the generic point of $\Gamma$, consider an affine open set $U_Z$ on $Z$ such that $F$ is defined by
	$v=0$ for some regular function $v$ on $U_Z$. Since $val_F(x,y,z,u)=\frac{1}{r}(\alpha,m_{k-r}r-\alpha,k-r,r)$,  
	One may write $x=v^{\frac{i}{r}}\alpha$, $y=v^{m_{k-r}-\frac{i}{r}}\beta$, $z=v^{\frac{k-r}{r}}\gamma$ and $u=v\delta$,
	such that $\alpha$, $\beta$, $\gamma$ and $\delta$ do not vanish along $F$. Furthermore since $v_E(u)=v_F(u)=1$, $\delta$ do not vanish
	along the generic point of $\Gamma$.\par
	On the other hand, the regular functions near the origin of $U_u\subset X'$ is generated by $x'=x/u^{\frac{i}{r}}$,
	$y'=y/u^{m_{k-r}-\frac{i}{r}}$, $z'=z/{u}^{\frac{k-r}{r}}$ and $u$.
	Thus the coordinate change of $Z\dashrightarrow X'$ is given by $x'=\alpha/\delta^{\frac{i}{r}}$, $y'=\beta/\delta^{m_{k-r}-\frac{i}{r}}$,
	$z'=\gamma/\delta^{\frac{k-r}{r}}$ and $u=v\delta$. This shows that $Z\dashrightarrow X'$ is well-defined along the generic point of $\Gamma$.
	Furthermore, since $v_E(x)>v_F(x)$, $v_E(y)>v_F(y)$ and $v_E(z)>v_F(z)$, we have $\alpha$, $\beta$ and $\gamma$ vanish along $\Gamma$.
	Thus the image of $\Gamma$ on $X'$ is the origin of the chart $U_u$.
\end{proof}
\begin{pro}\label{gEss}
	Assume that $X$ has Gorenstein $\Q$-factorial $cA$ type singularities. Let $E$ be an exceptional divisor over $X$.
	Then $E$ is a non-Nash essential divisor if and only if $a(X,E)=2$, $2m=m_2$ and $v_E(x,y,z,u)=(m,m,2,1)$
	for a suitable choice of local coordinates of $X$.
\end{pro}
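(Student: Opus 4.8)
Throughout $r=1$, so the congruence conditions in Lemmas \ref{es1} and \ref{es2} are automatic, and since $K_X$ is Cartier every discrepancy $a(X,E)$ is a positive integer; I write $k=v_E(z)$. I would treat the two implications separately, and I expect the delicate part to be the ``only if'' direction.

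For the ``if'' direction, assume that $z^iu^j\in f$ forces $2i+j\geq 2m$ in suitable coordinates; as we may take $z^m\in f$, this is exactly $m_2=2m$. Let $\phi\colon X'\to X$ be the weighted blow-up with weight $w=(m,m,2,1)$ and put $\tau=exc(\phi)$. One checks $\tau$ is irreducible --- on the exceptional weighted projective space it is cut out by $xy-g(z,u)$ with $z^m$ a term of $g$ --- so $v_\tau(x,y,z,u)=(m,m,2,1)$, and the discrepancy formula for weighted blow-ups of a hypersurface gives
\[a(X,\tau)=(m+m+2+1)-\w_w(xy-f)-1=2m+2-2m=2,\]
using $\w_w(xy)=2m=m_2=\w_w(f)$. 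Now Lemma \ref{es2} applies to $\tau$ with $k=2$: $1<v_\tau(z)=2=a(X,\tau)\leq 2$, $v_\tau(u)=1$, $v_\tau(x)=v_\tau(y)=m=m_{k-1}$; so $\tau$ is essential. Lemma \ref{subn} also applies: $v_\tau(z)=a(X,\tau)=2>1$, $v_\tau(x)=m$, $v_\tau(x)+v_\tau(y)=2m=m_k$, $v_\tau(u)=1$; so $v_\tau$ is not a Nash valuation. Hence $\tau$ is a non-Nash essential divisor.

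For the ``only if'' direction, let $E$ be a non-Nash essential divisor over $X$. By Proposition \ref{epro} we have $a(X,E)\leq 2$, so $a(X,E)\in\{1,2\}$; if $a(X,E)=1$ then by Proposition \ref{sig}(1) $E$ is one of the $m-1$ exceptional divisors of the Gorenstein resolution $Y\to X$, hence a Nash valuation by Proposition \ref{nashcar}, contrary to hypothesis. So $a(X,E)=2$. I would then show $v_E(u)=1$ and $v_E(z)=a(X,E)=2$ (hence $k=2$ and $m_{k-1}=m$): since an exceptional divisor of $Y\to X$ has discrepancy $1$, the center of $E$ on $Y$ is not a divisor, so it meets the exceptional locus of $Y\to X$ in a proper subvariety or lies on a Gorenstein $cA$ point of $Y$, and iterating the Gorenstein-resolution construction while tracking $u$ and $z$ exactly as in the proofs of Lemma \ref{v1} and Proposition \ref{sig} (each blow-up there gives $u$ the weight $1$, and $a(X_j,E)=v_E(z)$ along the tower) yields both equalities. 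Granting this, $E$ is essential with $v_E(z)=2>1$ and $v_E(u)=1$, so the contrapositive of Lemma \ref{es1} forces $v_E(x)\geq m$ and $v_E(y)\geq m$; in particular $v_E(x)+v_E(y)\geq 2m\geq m_2$, the last inequality from $z^m\in f$.

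Finally I would identify $v_E$ exactly. Let $\psi\colon X''\to X$ be the weighted blow-up along $w'=(v_E(x),v_E(y),2,1)$, with exceptional divisor $E_{w'}$, so that $a(X,E_{w'})=v_E(x)+v_E(y)+2-m_2\geq 2$. Since $v_E$ agrees with $w'$ on $x,y,z,u$ and $v_E(u)=1$, the center $\xi$ of $E$ on $X''$ lies in $E_{w'}$ with $v_E(E_{w'})=v_E(u)=1$, and one checks (using $v_E(x/u^{v_E(x)})=0$, etc.) that $X''$ is smooth at $\xi$; if $E\neq E_{w'}$ then $a(X'',E)\geq 1$, so $a(X,E)=a(X'',E)+a(X,E_{w'})\geq 3$, contradicting $a(X,E)=2$. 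Hence $E=E_{w'}$ and $a(X,E)=a(X,E_{w'})=2$, which forces $v_E(x)+v_E(y)=m_2$; combined with $v_E(x),v_E(y)\geq m$ and $m_2\leq 2m$ this gives $m_2=2m$, $v_E(x)=v_E(y)=m$. Thus $v_E(x,y,z,u)=(m,m,2,1)$, the stated condition on the monomials of $f$ holds, and $E$ coincides with the $\tau$ of the ``if'' direction, which also settles uniqueness. The step I expect to be the real obstacle is the one invoking the ``explicit picture'': that a non-Nash essential divisor of discrepancy $2$ must have $v_E(u)=1$ and $v_E(z)=2$, so that Lemmas \ref{es1} and \ref{es2} apply at all (and that the auxiliary $X''$ is terminal along $\xi$, so that $a(X,E)$ is computed by the weighted blow-up along $v_E$); this rests on the recursive structure of the feasible resolution in Section \ref{sRes} and on the $\Q$-factoriality input used in the proof of Lemma \ref{es2}, while everything else is a bounded computation.
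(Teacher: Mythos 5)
Your ``if'' direction is fine and matches the paper: once $a(X,E)=2$ and $v_E(x,y,z,u)=(m,m,2,1)$ are given, Lemma \ref{es2} yields essentiality and Lemma \ref{subn} yields non-Nashness. The ``only if'' direction, however, has two genuine gaps. First, you open it by invoking Proposition \ref{epro} to get $a(X,E)\leq 2$; but in this paper Proposition \ref{epro} is \emph{deduced from} Proposition \ref{gEss} in the Gorenstein case (its proof reads ``If $r=1$, Proposition \ref{gEss} and Proposition \ref{nQ} implies the statement''), so this step is circular. No a priori discrepancy bound is available for $r=1$: Lemma \ref{es1} only excludes $a(X,E)>2$ for divisors lying on the Gorenstein resolution $Y$, or for divisors already known to satisfy $v_E(z)>1$, $v_E(u)=1$ and the inequality in its condition (ii) --- none of which you have established at that point.

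Second, the step you yourself flag as the obstacle --- that a non-Nash essential $E$ must have $v_E(u)=1$ and $v_E(z)=2$ --- is exactly where the paper's work lies, and your proposed shortcut (``each blow-up gives $u$ weight $1$'') does not go through. The paper argues by induction on the tuple $(m,\bar{\delta}(X))$: it takes the $w$-morphism $X_1\to X$ of weight $(m-1,1,1,1)$, shows $P'=Center_{X_1}E$ must be a Gorenstein $cA$ point, and applies the inductive hypothesis at $P'$. The inductive conclusion $v_E(x',y',z'',u'')=(m',m',2,1)$ holds only in \emph{some} coordinates $(z'',u'')$ at $P'$, which need not be compatible with the original $(z,u)$; in particular the case $u=z''$, $z'=u''$ genuinely occurs and gives $v_E(z,u)=(3,2)$, i.e.\ $v_E(u)=2$. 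Ruling it out requires a separate weighted blow-up $\bar{X}\to X$ with weight $(m-k-1,k+1,1,1)$, $k=\ru{\frac{m'}{2}}$, while the subcase $a(X_1,E)=2$, $v_E(u)=1$ forces $a(X,E)=3$ and is killed by Lemma \ref{es1}. None of this appears in your sketch. Your final identification of $v_E$ via the auxiliary blow-up $X''$ is also not watertight as written (you assert without proof that $X''$ is smooth at the center of $E$ and that discrepancies add), though that part is likely repairable; the circularity and the missing induction with its $v_E(u)=2$ case are the real defects.
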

\begin{proof}
	First assume that $a(X,E)=2$ and $v_E(x,y,z,u)=(m,m,2,1)$. In this case $E$ is essential by Lemma \ref{es2}. One can see that $E$ is not Nash
	by applying Lemma \ref{subn}.\par
	Now we assume that $E$ is a non-Nash essential divisor and we are going to prove that the above conditions hold.
	Let $X_1\rightarrow X$ be the weighted blow-up with weight $(m-1,1,1,1)$ and let $P'=Center_{X_1}E$. 
	$P'$ should be a singular point of $X_1$. Note that $X_1$ has one cyclic-quotient point and other possible singularities are $cA$
	singularities. The Gorenstein resolution of $X$ is obtained by resolving the cyclic-quotient point of $X_1$. Hence if $P'$ is the
	cyclic-quotient point, then $E$ must appear on the Gorenstein resolution of $X$. However in this case $E$ should correspond to a Nash
	valuation of $X$ by Proposition \ref{sig} and Proposition \ref{nashcar}. Hence $P'$ is a Gorenstein point on $X_1$.\par
	Note that $E$ is also an essential divisor of $P'$.
	After a suitable change of coordinates on $X$ one may assume that
	$P'$ is the origin of the chart $U_u$. Let $x'$, $y'$, $z'$ and $u$ be the local coordinate near $P'$ and let $x'y'+f'(z',u)$ be the
	local defining equation of $U_u$. Let $m'=\mul f'(z',u)$. As we discussed in Section \ref{sRes}, $P'\in X_1$
	has better singularity the $X$ in the sense that the tuple $(m,\bar{\delta}(X))>(m',\bar{\delta}(P'\in X_1))$.
	We will induction on this tuple, and assume that our statement hold for $E$ over $P'\in X_1$.
	More precisely, we may assume that either $E$ corresponds to a Nash valuation of $P'$, which implies $a(X_1,E)=1$, or
	$a(X_1,E)=2$ and $v_E(x',y',z'',u'')=(m',m',2,1)$ for a suitable change of coordinate $z''=\alpha(z',u)$ and $u''=\beta(z',u)$.\par
	First we assume that $a(X_1,E)=1$. Proposition \ref{sig} says that $v_E(x',y',z',u)=(i,m'-i,1,1)$ for some
	$i<m'$, where $m'=m_2-m$ by Lemma \ref{addm}. This implies that $v_E(x,y,z,u)=(m-1+i,1+m'-i,2,1)$. Note that we have $m_2\leq 2m$ and
	$m'\leq m$. Lemma \ref{es1} says that $v_E(x)=m-1+i\geq m$ and $v_E(y)=m'+1-i\geq m$. Hence $i=1$ and $m'=m$. we have
	$m_2=2m$ and $v_E(x,y,z,u)=(m,m,2,1)$. One can also compute that $a(X,E)=2$.\par
	Now we assume that $a(X_1,E)=2$ and $v_E(x',y',z'',u'')=(m',m',2,1)$. First assume that $v_E(u)=1$. In this case we may assume that
	$u=u''$ and $z''=z'+u\phi(u)$ for some $\phi(u)\in\Cc[[u]]$. Let $z_1=z+u^2\phi(u)$ we can see that $z_1=z''u$ and
	$v_E(x,y,z_1,u)=(m+m'-1,m'+1,3,1)$. Also notice that in this case we have $a(X,E)=3$. Hence $E$ is not essential by Lemma \ref{es1}.\par
	Finally assume that $v_E(u)=2$. We may assume $u=z''$ and $z'=u''$. We have $v_E(x,y,z,u)=(m'+2(m-1),m'+2,3,2)$.
	Let $k=\ru{\frac{m'}{2}}$ and define $\bar{X}\rightarrow X$ be the weighted blow-up with weight $(m-k-1,k+1,1,1)$.
	Let $(\bar{y},\bar{z},\bar{u})$ be the local coordinate of the chart $\bar{U}_y\subset \bar{X}$. We have
	\[v_E(\bar{y},\bar{z},\bar{u})=\frac{1}{k+1}(m'+2,3(k+1)-m'-2,2(k+1)-m'-2).\]
	Since $2(k+1)\geq m'+2$ one can see that $Center_{\bar{X}}(E)$ is either a cyclic-quotient point or a curve.
	If it is a curve then $E$ can not be essential. If it is a cyclic-quotient point then $E$ either corresponds to a
	Nash valuation of $X$, or is not essential. This proves our statement.
\end{proof}

\subsection{Non-$\Q$-factorial cases}\label{sness}
Let \[X=(xy-f(z,u)=0)\subset\Cc^4_{(x,y,z,u)}/\frac{1}{r}(a,-a,1,0)\] be a $cA/r$ singularity. Let $f(z,u)=f_1(z,u)...f_n(z,u)$ be a
factorization into irreducible components in $\Cc[[z^r,u]]$, here $f_i(z,u)$ are all invariant under the cyclic action.
By \cite[2.2.7]{k}, we have
\[Pic(X-O)\cong \Z/r\Z[K_{X-O}]+\frac{\Z[x=f_1(z,u)=0]+...+\Z[x=f_n(z,u)=0]}{([x=f_1(z,u)=0]+...+[x=f_n(z,u)=0])}.\]
In particular, $X$ is $\Q$-factorial if and only if $f(z,u)$ is irreducible in $\Cc[[z^r,u]]$.\par
We follow the construction in \cite[Section 2.2]{k} to construct a $\Q$-factorization of $X$.
Let $\tl{X}=(xy-(z,u)=0)\subset\A^4$ be the canonical cover of $X$ and let $G=<\sigma>$ be the cyclic group such that $X=\tl{X}/G$.
Let $\tl{X_1}$ be the blow-up of the ideal $(x,f_1(z,u))$ on $\tl{X}$. There are two affine charts on $\tl{X_1}$. They are
\[ U_s=\left(\tcc{$xs-f_1(z,u)=0$}{$y-sf_2(z,u)...f_n(z,u)=0$}\right)\cong(xs-f_1(z,u)=0)\subset\A^4_{(x,s,z,u)}\]
and \[U_t=\left(\tcc{$tf_1(z,u)-x=0$}{$ty-f_2(z,u)...f_n(z,u)=0$}\right)\cong(ty-f_2(z,u)...f_n(z,u)=0)\subset\A^4_{(t,y,z,u)}.\]
One may define $G$-action on $\tl{X_1}$ by $\sigma(s)=\sigma(y)$ and $\sigma(t)=\sigma(x)$ and let $X_1=\tl{X_1}/G$.
$\tl{X_1}$ also has $cA/r$ singularities. We denote the image of the origin of the chart $U_s$ by $P$ and the image of the origin of the chart
$U_t$ by $P'$. Then $P$ is a $\Q$-factorial point. $P'$ may not be $\Q$-factorial, but it has better singularity than $X$ in the sense that the
number of irreducible components of the defining equation decreases. Repeat this process we get a sequence of terminal threefolds with
$cA/r$ singularities \[ X'=X_{n-1}\rightarrow ...\rightarrow X_1\rightarrow X_0=X\]
such that $X'$ has $\Q$-factorial singularities. Note that $\tl{X_1}\rightarrow \tl{X}$ is isomorphic in codimension one, hence
$X_1\rightarrow X$ is isomorphic in codimension one. Inductively we have $X'\rightarrow X$ is isomorphic in codimension one. Thus $X'$ is in fact
a $\Q$-factorization of $X$.\par
Let $C_i=exc(X_i\rightarrow X_{i-1})$ and $C'_i$ by the proper transform of $C_i$ on $X'$. Recall that we define $w_k$ be the weight such that
$w_k(z,u)=(\frac{k}{r},1)$. For any $g(z,u)\in\Cc[[z,u]]$, we define $m_k(g)=\w_{w_k}g(z,u)$.
\begin{lem}
	Nash valuations of $X$ is the union of the Nash valuations of $X'$ and the valuation obtained by blowing-up $C'_i$.
\end{lem}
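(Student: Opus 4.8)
The plan is to pull arcs back to the $\Q$-factorialization $g\colon X'=X_{n-1}\to\cdots\to X_0=X$ constructed above and to recover the irreducible components of $\pi_X^{-1}(X_{sing})$ from those of the corresponding fibre over $X'$; the extra contributions will be exactly the exceptional curves $C'_i$.

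First I would record the geometry of $g$. Since $X$ is a terminal threefold its singularity is isolated, so $\pi_X^{-1}(X_{sing})=\pi_X^{-1}(O)$ with $O$ the $cA/r$ point. Each step $X_j\to X_{j-1}$ is isomorphic in codimension one, hence so is $g$; as a small proper birational morphism onto a smooth ($\Q$-factorial) variety is an isomorphism by \cite[Lemma 17]{jk}, $g$ is an isomorphism over $X\setminus O$, and therefore $g^{-1}(O)=exc(g)=\bigcup_{i=1}^{n-1}C'_i$ is a union of curves. Inspecting the two affine charts $U_s,U_t$ of the blow-up of $(x,f_i)$ as in Section \ref{sness}, one sees that the two special points of $X_i$ (the origins of $U_s$ and $U_t$) lie on $C_i$, hence $X'_{sing}\subset\bigcup_i C'_i$.

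Next I would lift arcs. A non-constant arc $\alpha\in\pi_X^{-1}(O)$ has generic point in $X\setminus O$, over which $g$ is an isomorphism, so by the valuative criterion $\alpha$ lifts uniquely to an arc $\tilde\alpha$ on $X'$ with $\tilde\alpha(0)\in g^{-1}(O)$; conversely every non-constant arc of $X'$ based at a point of $g^{-1}(O)$ pushes down into $\pi_X^{-1}(O)$. Thus $\pi_g$ identifies the non-constant arcs of $\pi_X^{-1}(O)$ with those of $\pi_{X'}^{-1}(g^{-1}(O))$, and an arc and its lift induce the same valuation of $K(X)=K(X')$. Since the constant arc at a point $q\in g^{-1}(O)$ lies in the closure of the non-constant arcs through $q$ (replace $\alpha(t)$ by $\alpha(st)$), discarding constant arcs destroys no component; hence $\pi_g$ gives a bijection, compatible with valuations, between the irreducible components of $\pi_X^{-1}(O)$ and those of $\pi_{X'}^{-1}\big(\bigcup_i C'_i\big)$.

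It remains to list the components of $\pi_{X'}^{-1}(\bigcup_i C'_i)$. A component dominating some $C'_i$ is, since $C'_i$ is irreducible with generic point in the smooth locus of $X'$, the closure of the arcs based at that generic point, and its valuation is the order of vanishing along $C'_i$, i.e.\ the valuation obtained by blowing up $C'_i$. A component mapping to a smooth point $q$ of $X'$ contributes nothing new, because translating the base point along the $C'_i$ through $q$ shows every arc through $q$ already lies in the previous closure. A component mapping to a point of $X'_{sing}$ is a component of $\pi_{X'}^{-1}(X'_{sing})$, hence yields a Nash valuation of $X'$. Conversely each $\overline{\pi_{X'}^{-1}(\eta_{C'_i})}$ is a genuine component (nothing else contains it, by comparing images). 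The delicate point — which I expect to be the main obstacle — is to show that \emph{every} component of $\pi_{X'}^{-1}(X'_{sing})$ persists, i.e.\ is not absorbed into some $\overline{\pi_{X'}^{-1}(\eta_{C'_j})}$: the translation argument fails here precisely because $X'$ is singular at the relevant centres, and I would exclude absorption either by the analysis of arcs near $cA/r$ points in the spirit of \cite{jk}, or by comparing codimensions (Mather log discrepancies) of the maximal divisorial sets of $\mathrm{ord}_{C'_j}$ and of a Nash valuation of $X'$. Granting this, the components of $\pi_{X'}^{-1}(\bigcup_i C'_i)$ are exactly the $\overline{\pi_{X'}^{-1}(\eta_{C'_i})}$ together with the components of $\pi_{X'}^{-1}(X'_{sing})$, which is the claimed description.
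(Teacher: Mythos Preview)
Your approach is quite different from the paper's, and the gap you flag is real.

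The paper does not work at the level of arc spaces at all for this lemma. Instead it invokes the characterization already obtained in Proposition~\ref{nashcar} (together with Proposition~\ref{sig}): a divisorial valuation over a $cA/r$ point is Nash if and only if its discrepancy is $\leq 1$. Since $g\colon X'\to X$ is isomorphic in codimension one, $a(X',E)=a(X,E)$ for every exceptional divisor $E$, so it suffices to match up the sets of discrepancy-$\leq 1$ divisors on both sides. This becomes a pure counting exercise: for $k<r$, Proposition~\ref{sig} gives $m_k(f)$ divisors of discrepancy $k/r$ over $X$ and $\sum_i m_k(f_i)=m_k(f)$ over the singular points of $X'$; for discrepancy $1$ one gets $m_r(f)-1$ over $X$ versus $m_r(f)-n$ over $X'_{sing}$, and the missing $n-1$ are supplied exactly by the blow-ups of the $C'_i$. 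No containment of arc families is ever analysed.

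Your arc-lifting argument is conceptually natural and most of it is correct: the bijection between non-constant arcs in $\pi_X^{-1}(O)$ and in $\pi_{X'}^{-1}(g^{-1}(O))$ via the valuative criterion, the identification of the component dominating $C'_i$ with $\mathrm{ord}_{C'_i}$, and the absorption of smooth-point fibres are all fine. But the ``delicate point'' you isolate---that no Nash component of $X'$ is swallowed by some $\overline{\pi_{X'}^{-1}(\eta_{C'_j})}$---is exactly the substance of the lemma, and you do not prove it. Your two suggested fixes (a Johnson--Koll\'ar style arc analysis, or a comparison of Mather log discrepancies) would each require real work comparable to the paper's Section~\ref{sNash}. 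The paper's discrepancy-counting route sidesteps this question entirely: once one knows ``Nash $=$ discrepancy $\leq 1$'' on each side, equality of the two sets of valuations follows from equality of cardinalities at each discrepancy level, with no need to rule out containments among arc families.
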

\begin{proof}
	Nash valuations of $X$ corresponds to exceptional divisors of discrepancy less than or equal to one. Since $X'\rightarrow X$ is isomorphic
	in codimension one, for any exceptional divisor $E$ over $X'$ we have $a(X',E)=a(X,E)$. Given $k<r$, Proposition \ref{sig} says that there
	are exactly $m_k(f)$ many exceptional divisors over $X$ which is of discrepancy $\frac{k}{r}$. Note that there are $n$ $cA/r$ points on
	$X'$ which is defined by $(xy-f_i(z,u)=0)\subset\Cc^4_{(x,y,z,u)}/\frac{1}{r}(a,-a,1,0)$ for $i=1$, ..., $n$. Thus the total number of
	exceptional divisors of discrepancy $\frac{k}{r}$ over $X'$ is $\sum_{i=1}^n m_k(f_i)=m_k(f)$. This says that the exceptional divisors
	which is of discrepancy less than one over $X$ is exactly those exceptional divisors of discrepancy less than one over $X'$.\par
	Now we count the number of discrepancy one exceptional divisors. There are $m_r(f)-1$ many exceptional divisors of discrepancy one
	over $X$ and $\sum_{i=1}^{n}(m_r(f_i)-1)=m_r(f)-n$ many exceptional divisors of discrepancy one over $X'$. Let $E_i$ be the
	exceptional divisor obtained by blowing-up $C'_i$, for $i=1$, ..., $n-1$, then we also have $a(X,E_i)=a(X',E_i)=1$. Thus the
	exceptional divisors of discrepancy one over $X$ are exactly the exceptional divisors of discrepancy one over singular points of $X'$,
	plus $\{E_i\}_{i=1}^{n-1}$. This proves the lemma.
\end{proof}
\begin{pro}\label{nQ}
	Assume that $r=1$ and $X$ is not $\Q$-factorial.
	Then the Nash map of $X$ is surjective.
\end{pro}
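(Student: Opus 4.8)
The plan is to play the preceding lemma off against the $\Q$-factorial case (Proposition \ref{gEss}) and the non-essentiality criterion of Lemma \ref{es1}. Fix a $\Q$-factorization $g\colon X'\to X$ as constructed in Section \ref{sness}; since $X$ is not $\Q$-factorial we may write $f=f_1\cdots f_n$ with $n\ge 2$ and all $f_i$ irreducible, and then $X'$ has $\Q$-factorial $cA$ points $P_1,\dots,P_n$ with $P_i\cong(xy+f_i=0)$ together with the exceptional curves $C'_1,\dots,C'_{n-1}$ of $g$. By the preceding lemma the Nash valuations of $X$ are precisely the Nash valuations of the $P_i$ together with the valuations of the blow-ups of the $C'_i$, so it suffices to show that every essential divisor $E$ over $X$ is one of these. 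Since $X$ is Gorenstein and terminal, $a(X,E)$ is a positive integer, and $a(X,E)\le 2$ by Proposition \ref{epro}. If $a(X,E)=1$ then $E$ is Nash: combining the preceding lemma with Proposition \ref{sig}(1), every exceptional divisor of discrepancy one over $X$ already occurs on the Gorenstein resolution $Y$ (which for $r=1$ carries only divisors of discrepancy one), so $E\subset Y$ and Proposition \ref{nashcar} applies.

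So assume $a(X,E)=2$; I will derive a contradiction. As $E$ is exceptional over $X$ and $g$ is small, $\Gamma:=\cen_{X'}E$ has codimension $\ge 2$ and lies in $g^{-1}(X_{sing})=\bigcup_iC'_i$. If $\dim\Gamma=1$, then $\Gamma$ is (a component of) some $C'_i$; passing to a resolution of $X$ that factors through the blow-up of $C'_i$, the exceptional divisor of that blow-up is a divisorial component of the exceptional locus over $X$, and essentiality forces $v_E$ to be the valuation of that blow-up — which has discrepancy one, contradicting $a(X,E)=2$. If $\Gamma$ is a smooth point of $X'$, take a resolution of $X$ that is an isomorphism over that point; the center of $E$ on it is a point lying on the proper transform of some $C'_i$, hence not an irreducible component of the exceptional locus, again contradicting essentiality. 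So $\Gamma=P_i$ for some $i$. Since every resolution of a neighbourhood of $P_i$ extends to a resolution of $X'$, hence of $X$, and on such a resolution $\cen E$ is contained in the fibre over $P_i$ and is a component of the exceptional locus over $X$, it is a component of that fibre; thus $E$ is essential over $(P_i\in X')$, and $a(P_i,E)=a(X',E)=a(X,E)=2$. As $P_i$ is a $\Q$-factorial $cA$ point, Proposition \ref{gEss} identifies $E$ as the unique non-Nash essential divisor over $P_i$: after a suitable choice of local coordinates at $P_i$, $v_E=(\mu_i,\mu_i,2,1)$ with $\mu_i=\mul f_i$ and $m_2(f_i)=2\mu_i$.

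The decisive step is to show that such an $E$ cannot after all be essential over $X$ — and this is where non-$\Q$-factoriality is used. Tracking the coordinates through the small modifications $X'\to\cdots\to X$ (which leave $z,u$ fixed) gives $v_E(z)=2$, $v_E(u)=1$, and, after relabelling so that $f_1,\dots,f_{i-1}$ are peeled off before $P_i$, $v_E(x)=\mu_i+\sum_{l<i}m_2(f_l)$ and $v_E(y)=\mu_i+\sum_{l>i}m_2(f_l)$ in the coordinates of $X$. Since $m_2(f_l)\le 2\mu_l$ and $n\ge 2$, one has $\min\{v_E(x),v_E(y)\}<m_1(f)=\mul f$ except in the "balanced" situation where every $f_l$ has leading form a power of $z$ and $\sum_{l\ne i}\mu_l$ splits evenly on the two sides of $i$; in the former case the hypotheses of Lemma \ref{es1} (with $k=2$, $r=1$) are satisfied, so $E$ is not essential over $X$, the contradiction we want. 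In the balanced case $v_E=(m_1(f),m_1(f),2,1)$ and Lemma \ref{es1} no longer applies; here I would blow up a discrepancy-one divisor $F$ over $X$ with $v_F=(j,m_1(f)-j,1,1)$ and $v_F\le v_E$, observe that on $\mathrm{Bl}_FX$ the center of $E$ is a $cA$ point $Q$ which is again non-$\Q$-factorial (its defining polynomial is again a product of $n$ irreducible factors) and over which $a(Q,E)=a(X,E)-1=1$, and then run the argument of the proof of Lemma \ref{es2} in reverse: because $X$, hence this model, is not $\Q$-factorial, the purity obstruction \cite[Lemma 17]{jk} that made the analogous divisor essential in the $\Q$-factorial case is unavailable, and resolving $Q$ one produces a resolution of $X$ on which $\cen E$ is not an irreducible component of the exceptional locus. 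In either case $E$ is not essential, contradicting the hypothesis, which completes the proof.

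I expect the main obstacle to be precisely this last "balanced" subcase: Lemma \ref{es1} is powerless there, so one has to exploit the failure of $\Q$-factoriality directly — reducing, by a weighted blow-up, to a discrepancy-one divisor over a non-$\Q$-factorial $cA$ point and re-running the argument of Lemma \ref{es2} with \cite[Lemma 17]{jk} no longer forcing a contradiction. The bookkeeping of the coordinate change from $P_i$ to $X$ (and the verification that the center of $E$ does land on a non-$\Q$-factorial $Q$ with the stated discrepancy) is the other place where care will be needed.
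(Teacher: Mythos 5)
Your reduction to the $\Q$-factorization $X'\to X$, the dichotomy on $\cen_{X'}E$, and the identification of a putative non-Nash essential $E$ with the discrepancy-two divisor of Proposition \ref{gEss} over some $\Q$-factorial point $P_i$ of $X'$ all match the paper's proof. But there is a genuine gap in the last step. You push the valuation all the way back to the original coordinates on $X$ and try to apply Lemma \ref{es1} there, and, as you yourself observe, this fails in the ``balanced'' case $v_E=(m_1(f),m_1(f),2,1)$. That case really occurs (e.g.\ $f=(z+u^2)(z^2+u^5)(z-u^2)$ with $E$ the gEss-divisor over the middle factor gives $v_E(x)=v_E(y)=4=m_1(f)$), and your proposed workaround --- blowing up a discrepancy-one divisor and ``running the argument of Lemma \ref{es2} in reverse'' because the purity obstruction of \cite[Lemma 17]{jk} is ``unavailable'' --- is not a proof: the absence of the obstruction that forced essentiality in the $\Q$-factorial case does not by itself produce a resolution on which $\cen E$ fails to be a component of the exceptional locus.

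The paper sidesteps the balanced case entirely by applying Lemma \ref{es1} one step up the chain rather than over $X$ itself. Let $j$ be minimal with $P_j:=\cen_{X_j}E$ a $\Q$-factorial point; then $P_{j-1}$ is a non-$\Q$-factorial $cA$ point with local equation $ty=f_j\cdots f_n$ (at least two factors remaining), and $P_j$ sits in the chart $U_s$ of the blow-up of the ideal $(t,f_j)$, where the coordinate $t$ is \emph{unchanged}. Proposition \ref{gEss} gives $v_E(t)=m_1(f_j)$, which is automatically strictly less than $m_1(f_j\cdots f_n)$, so Lemma \ref{es1} applies at $P_{j-1}$ and $E$ is not essential over $P_{j-1}$, hence not over $X$. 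The asymmetric accumulation of the $m_2(f_l)$ that creates your balanced case only arises when you insist on transporting the valuation all the way down to $X_0$. A secondary point: you invoke Proposition \ref{epro} to bound $a(X,E)\le 2$, but the paper proves \ref{epro} for $r=1$ using \ref{gthm}/\ref{nQ}, so that appeal is circular; it is also unnecessary, since Proposition \ref{gEss} already classifies all essential divisors over the $\Q$-factorial points of $X'$ without any a priori discrepancy bound.
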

\begin{proof}
	Let $E$ be an essential divisor of $X$. Then $Center_{X'}E$ is either a singular point of $X'$ or $C'_i$.
	If $Center_{X'}E=C'_i$, then $E$ should be the blow-up of $C'_i$, so $E$ corresponds to a Nash valuation of $X$.
	Assume that $Center_{X'}E$ is a singular point of $X'$, then $E$ is an essential divisor of $X'$.
	If $E$ corresponds to a Nash valuation of $X'$, then $E$ corresponds to a Nash valuation of $X$ so there is nothing to do.
	Now we assume that $E$ is a non-Nash essential divisor of $X'$ and we will show that this it impossible.\par
	Let $P_i=Center_{X_i}E$ and let $j$ be the smallest integer such that $P_j$ has $\Q$-factorial singularity. We are going to say that
	$E$ is not a essential divisor of $P_{j-1}$, hence $E$ can not be an essential divisor of $X$. Thus we may assume $j=1$.\par
	As the notation above $P_1$ is defined by
	\[ (xs-f_1(z,u)=0)\subset \A^4_{(x,s,z,u)}\] such that $y=sf_2(z,u)...f_n(z,u)$.
	We have $E$ is an essential divisor of $P_1$. By Proposition \ref{gEss} we have $val_E(x,s,z,u)=(m_1(f_1),m_1(f_1),2,1)$,
	hence \[val_E(x,y,z,u)=(m_1(f_1),m_1(f_1)+\sum_{i=2}^nm_2(f_i),2,1).\]
	However, Lemma \ref{es1} says that $E$ can not be an essential divisor.
\end{proof}
\begin{rk}
	When $r>1$ there is an example such that $X$ is not $\Q$-factorial and the Nash map is not surjective, please see Example \ref{eg}.
	However we can not find a general theory to describe all the essential valuations.
\end{rk}
\subsection{Valuations over the Gorenstein resolution}
\begin{pro}\label{esscar}
	Assume that $X$ has $cA/r$ singularity with $r>1$, then every exceptional divisor over $Y$ is not an essential divisor of $X$.
\end{pro}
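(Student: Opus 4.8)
The plan is to take an exceptional divisor $E$ over $Y$, assume that $E$ is essential over $X$, and exhibit a model on which the center of $E$ fails to be an irreducible component of the preimage of $O=X_{sing}$; by Proposition~\ref{epro} we may assume $a(X,E)\le 2$. First I would use that $Y$ is Gorenstein terminal. Writing $g\colon Y\to X$ and $K_Y=g^{*}K_X+\sum_j a(X,E_j)E_j$ with $a(X,E_j)>0$, one has $a(X,E)=a(Y,E)+\sum_j a(X,E_j)\operatorname{ord}_E(E_j)$ and $a(Y,E)\ge 1$; moreover $a(X,E_j)=v_{E_j}(z)$ for every $j$ by Propositions~\ref{cq} and~\ref{sig}, so $a(X,E)=a(Y,E)+v_E(z)-\frac{1}{r}\operatorname{ord}_E(D_z)$, where $D_z$ is the proper transform of $(z^r=0)$ on $Y$. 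Hence, if the center $\Gamma$ of $E$ on $Y$ does not lie on $D_z$, then $a(X,E)\ge 1+v_E(z)$, and in particular $v_E(z)\le 1$. This already disposes of two cases. If $\Gamma$ is a point of the smooth locus of $Y$ then $a(Y,E)\ge 2$, so $a(X,E)>2$, contradicting Proposition~\ref{epro}. If $\Gamma$ is a curve lying in the smooth locus of $Y$, then $\Gamma$ is contained in some $E_j$ (as $g^{-1}(O)=\bigcup_j E_j$ is purely of codimension one), and on a resolution $Z\to Y$ which is an isomorphism near $\Gamma$ the center of $E$ is a curve strictly contained in the proper transform of $E_j\subseteq g^{-1}(O)$, hence not an irreducible component of $g^{-1}(O)$, so $E$ is not essential.

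It remains to deal with centers lying over a singular point of $Y$. If $\Gamma$ lies over a cyclic-quotient point of one of the intermediate varieties of the resolution, then — those points being already resolved in $Y$ and their exceptional divisors being described by Propositions~\ref{cq} and~\ref{sig} — either $E$ lies on $Y$ (impossible), or $a(X,E)>2$, or the center of $E$ on a suitable resolution is a curve strictly inside one of the (purely codimension one) exceptional divisors over that point; in every case $E$ is not essential. So we may assume $\Gamma$ lies over a Gorenstein $cA$ (that is, $cDV$) point $Q$ of $Y$.

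Near $Q$ the morphism $Y\to X_{\delta(X)}$ is an isomorphism, so $E$ is an exceptional divisor over the Gorenstein $cA$ germ $Q$, and (as in the proof of Proposition~\ref{gEss}) $E$ is also essential over $Q$. By Propositions~\ref{gEss} and~\ref{noethm} there are only two possibilities: either $a(Q,E)=2$, or $E$ is a Nash valuation of $Q$ and $a(Q,E)=1$. In the first case $a(X,E)=a(Q,E)+\sum_j a(X,E_j)\operatorname{ord}_E(E_j)\ge 2+\frac{1}{r}>2$, since at least one $E_j$ passes through $Q$; this again contradicts Proposition~\ref{epro}. In the second case $E$ appears as a divisor on the Gorenstein resolution of $Q$, with $v_E(z_Q)=1$ and $v_E(u_Q)=1$ in suitable local coordinates at $Q$. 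Pushing $v_E$ back to $X$ through the tower of weighted blow-ups — each blow-up in the $U_u$-chart replaces $z$ by $zu^{1/r}$ and fixes $u$ — gives $v_E(u)=1$ and $v_E(z)=1$ plus a strictly positive contribution coming from the $\delta(X)\ge 1$ intervening blow-ups, so $v_E(z)>1$. Tracking also the $x$- and $y$-values of $v_E$ through the tower one checks that $v_E$ satisfies the hypotheses of Lemma~\ref{es1}, which shows that $E$ is not essential over $X$ (when in addition $\Gamma$ avoids $D_z$ one gets $a(X,E)\ge 1+v_E(z)>2$ directly, again contradicting Proposition~\ref{epro}). This exhausts all cases.

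I expect the main obstacle to be the last subcase: the divisor that is Nash over the $cDV$ point $Q$ yet must be shown non-essential over $X$. Making this precise requires controlling $v_E(x),v_E(y),v_E(z),v_E(u)$ after composing $\delta(X)$ weighted blow-ups with a cyclic-quotient resolution, and keeping exact track of which exceptional divisors of $g$ pass through $Q$ and with what discrepancies, so as to invoke Lemma~\ref{es1} (or the direct discrepancy estimate). This is precisely where the hypothesis $r>1$ enters: it forces at least one extra weighted blow-up between $Q$ and $X$, which raises $v_E(z)$ — equivalently the discrepancy over $X$ — past the threshold at which Lemma~\ref{es1} or Proposition~\ref{epro} applies. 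In effect, the \emph{extra} Gorenstein essential valuation of $Q$ (the one with local coordinates $(m_Q,m_Q,2,1)$) never descends to an essential valuation of $X$ when $r>1$, whereas for $r=1$ it can, which is the source of the difference between Theorems~\ref{gthm} and~\ref{ethm}.
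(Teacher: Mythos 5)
Your overall skeleton --- reduce to the case where the center $Q$ of $E$ on $Y$ is a Gorenstein $cA$ point, then split according to whether $E$ is a Nash valuation of the germ $Q$ or the discrepancy-two essential divisor of Proposition \ref{gEss} --- is the same as the paper's. But two steps break. First, every appeal to Proposition \ref{epro} is circular: for $r>1$ the paper deduces Proposition \ref{epro} \emph{from} Proposition \ref{esscar} (together with Lemma \ref{es1}, whose ``discrepancy $>2$ implies not essential'' clause is only stated, and only verifiable, for divisors lying \emph{on} $Y$). So you may not use ``essential implies $a(X,E)\le 2$'' to dispose of the smooth-point case, the cyclic-quotient case, or the $a(Q,E)=2$ case. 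In particular the $a(Q,E)=2$ case cannot be dismissed by a discrepancy count; it is genuinely possible a priori and the paper spends most of its proof on it.

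Second, and more damaging, your computation of $v_E$ on $X$ in the remaining cases uses the wrong chart. The Gorenstein $cA$ points of $Y$ are not reached through the origins of the $U_u$-charts of the tower of weighted blow-ups: those origins are exactly the non-Gorenstein $cA/r$ points $P_i$ that get blown up again. After a coordinate change such a point $Q$ sits at the origin of a $U_z$-chart, where $z=z'^{1/r}$, $u=u'z'$, $x=x'z'^{a/r}$, $y=y'z'^{(mr-a)/r}$. Hence a divisor $F$ centered at $Q$ has $v_F(u)\ge 2$ and $v_F(z)=\frac{1}{r}v_F(z')$; for instance in the Nash case the paper gets $v_F(x,y,z,u)=\frac{1}{r}(cr+a,(m'+m)r-cr-a,1,2r)$, i.e. $v_F(z)=\frac{1}{r}$ and $v_F(u)=2$ --- the exact opposite of the hypotheses $v_E(u)=1$, $v_E(z)>1$ of Lemma \ref{es1}. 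So Lemma \ref{es1} is inapplicable and your ``$v_E(z)>1$ plus positive contributions'' estimate is not what actually happens. The mechanism the paper uses instead, and which your proposal is missing, is to exhibit for each case ($a(X_1,F)=1$, and the two subcases of $a(X_1,F)=2$ according to the shape of the degree-$m'$ part of $f'$) an explicit weighted blow-up $\bar{X}\to X$ with isolated singularities on which $Center_{\bar{X}}F$ is a curve; this directly shows $F$ is not essential without any prior bound on $a(X,F)$. That construction is the content of the proof and would need to be supplied.
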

\begin{proof}
	Let $F$ be an exceptional divisor over $Y$ and $Q=Center_YF$. If $Q$ is a curve or $Q$ is a smooth point, then $F$ can not
	be an essential divisor. Now we may assume $Q$ is a $cA$ point.\par
	Recall that we have the sequence of divisorial contractions
	\[Y=X_k\rightarrow X_{k-1}\rightarrow ...\rightarrow X_1\rightarrow X_0=X.\]
	Let $Q_i=Center_{X_i}F$. Let $j$ be the smallest index such that $Q_j$ is a Gorenstein point on $X_j$, then $Q_{j-1}$ is a
	non-cyclic-quotient $cA/r$ point. We are going to prove that $F$ is not an essential divisor of $X_{j-1}$, and hence $F$ can not be a
	essential divisor of $X$. For simplicity we may assume $j=1$.\par
	After suitable change of coordinate, we may assume that $Q$ is the origin of the chart $U_z$.
	If $F$ is not a essential divisor of $X_1$, then $F$ can not be a essential divisor of $X$ and we have done.
	Assume now that $F$ is essential over $X_1$. We want to find a birational morphism $\bar{X}\rightarrow X$ such that $\bar{X}$
	has isolated singularities and $Center_{\bar{X}}F$ is a curve. This will imply $F$ is not an essential divisor of $X$.\par
	The construction of $\bar{X}$ is as follows. Assume that $x'$, $y'$, $z'$ and $u'$ are local coordinates near $Q$ and
	$f'(z',u')$ is the local defining equation. Let $m'=\mul f'(z',u')$. There are two possibilities.
	\begin{enumerate}[(i)]
	\item $a(X_1,F)=1$. In this case $v_F(x',y',z',u')=(c,m'-c,1,1)$ for some positive integer $c<m'$.
		We have $v_F(x,y,z,u)=\frac{1}{r}(cr+a,(m'+m)r-cr-a,1,2r)$. Note that $m\geq m'$, hence $cr+a<mr$.
		Let $\bar{X}\rightarrow X$ be the weighted blow-up with weight $\frac{1}{r}(cr+a,mr-cr-a,1,r)$. 
	\item $a(X_1,F)=2$. Note that $X_1$ has an essential divisor of discrepancy two implies $X_1$ has $\Q$-factorial singularities by
		Proposition \ref{nQ}. Thus the defining equation of $X_1$ satisfied the condition in Proposition \ref{gEss}.
		This will implies $f'(z',u')_{m'}=(pz'+qu')^{m'}$ for some $p$, $q\in\Cc$, where $f'(z',u')_{m'}$ denotes the homogeneous part of
		degree $m'$ of $f'(z,u')$.
		\begin{enumerate}[({ii-}1)]
		\item $q\neq0$. This implies $u^{m'}\in f(z,u)$, hence $m=m'$.
			After suitable change of coordinates we may assume $f'(z,u')_{m'}={u'}^{m'}$ and
			$v_F(x',y',z',u')=(m',m',1,2)$. Thus $v_F(x,y,z,u)=\frac{1}{r}(rm+a,2rm-a,1,3r)$. Let $\bar{w}$ be the weight such that
			$\bar{w}(z,u)=(\frac{1}{r},2)$. Let $\bar{m}=\w_{\bar{w}}f(z,u)$. Since $u^m\in f(z,u)$, $\bar{m}\leq 2m$.
			Since $z^{rm}\not\in f(z,u)$ (or the origin of $U_z$ do not contained in $X_1$), we have $m<\bar{m}$, hence $rm+a<r\bar{m}$.
			One can define $\bar{X}\rightarrow X$ to be the weighted blow-up with weight $\frac{1}{r}(rm+a,r(\bar{m}-m)-a,1,2r)$.
		\item $q=0$. Hence $f'(z',u')_{m'}={z'}^{m'}$ and $v_F(x',y',z',u')=(m',m',2,1)$. One can see that
			$v_F(x,y,z,u)=\frac{1}{r}(rm'+2a,r(m'+2m)-2a,2,3r)$. We need to check that\[m'\leq m_2-2.\]
			Indeed, there exists a monomial $z^{ir}u^j\in f(z,u)$ such that $i+j=m$. Assume that $i\neq0$.
			Since $i+2j-m=j>m'$, we have $m>j>m'$, hence $m_2\geq m\geq m'+2$ and we have done. Now assume $f(z,u)_{m}=u^m$.
			In this case $f'(z',u')={z'}^{m'}+{u}^{m}+$others. The condition that $X_1$ has a discrepancy two essential valuation implies
			$m\geq 2m'$. Since $m'\geq2$, $m_2=m\geq m'+2$.\par
			Note that $2(m+m')>2m\geq m_2$. We can define $\bar{X}\rightarrow X$ to be the weighted blow-up with weight
			$\frac{1}{r}(rm'+2a,r(m_2-m')-2a,2,r)$.
		\end{enumerate}
	\end{enumerate}		
	
\end{proof}
\section{Proof of the main theorems}\label{sThm}

\begin{proof}[Proof of Theorem \ref{nthm}]
	Proposition \ref{sig}, Proposition \ref{nashcar} and Proposition \ref{esscar} implies our theorem.
\end{proof}
\begin{proof}[Proof of Proposition \ref{epro}]
	If $r=1$, Proposition \ref{gEss} and Proposition \ref{nQ} implies the statement. When $r>1$,
	it follows from Lemma \ref{es1} and Proposition \ref{esscar}.
\end{proof}
\begin{proof}[Proof of Theorem \ref{gthm}]
	It is Proposition \ref{gEss}.
\end{proof}
\begin{proof}[Proof of Theorem \ref{ethm}]
	Proposition \ref{sig}, Lemma \ref{es1}, Lemma \ref{es2} and Proposition \ref{esscar} implies the theorem.
\end{proof}
\begin{proof}[Proof of Theorem \ref{noethm}]
	It is Proposition \ref{nQ}.
\end{proof}
\begin{eg}
	Let \[X=(xy+z^r+u^{2r}=0)\subset\Cc^4_{(x,y,z,u)}/\frac{1}{r}(1,-1,1,0).\] Then $X$ is $\Q$-factorial since
	$z^r+u^{2r}$ is irreducible in $\Cc[[z^r,u]]$ (cf. Section \ref{sness}). We have $m_k=k$ for all $k\leq 2r$.
	Thus Nash valuations of $X$ are
	\[ \se{\sigma^k_i}{\sigma^k_i(x,y,z,u)=\frac{1}{r}(k+ir,(k-i)r-k,k,r);1\leq k\leq r-1,0\leq i\leq k-1} \]
	\[\quad\cup \se{\sigma^r_i}{\sigma^r_i(x,y,z,u)=(i,r-i,1,1);1\leq i\leq r-1 }\]
	and non-Nash essential valuations of $X$ are
	\[\se{\tau^{r+k}_i}{\tau^{r+k}_i(x,y,z,u)=\frac{1}{r}(k+ir,(r+k-i)r-k,r+k,r);1\leq k\leq r-1,k\leq i\leq r-1}.\]
	In particular, there are \[\sum_{k=1}^{r-1}k+r-1=(r-1)\left(\frac{r}{2}+1\right)\] many Nash valuations, and
	\[(r-1)\left(\frac{r}{2}+1\right)+\sum_{k=1}^{r-1}(r-k)=(r-1)\left(\frac{r}{2}+1\right)+r(r-1)-\frac{r(r-1)}{2}=(r-1)(r+1)=r^2-1\]
	many essential valuations.
\end{eg}
\begin{eg}\label{eg}
	Let \[X=(xy+(z^6+u^{11})(z^2+u)=0)\subset\Cc^4_{(x,y,z,u)}/\frac{1}{2}(1,1,1,0).\] It is a non-$\Q$-factorial $cA/2$ singularity.
	We have $m_1=4$ and $m_3=10$. Thus there exists an exceptional divisor $E$ over $X$ such that $v_E(x,y,z,u)=\frac{1}{2}(9,11,3,2)$
	by Proposition \ref{sig}. Since $a(X,E)=\frac{3}{2}$, $E$ do not correspond to a Nash valuation.
	We are going to show that $E$ is an essential divisor. Thus the Nash map of $X$ is not surjective.\par
	Assume that $E$ is not essential. Then there exists a smooth model $W\rightarrow X$ such that $Center_WE=\Gamma$ is a curve,
	$\Gamma\subset F$ for some exceptional divisor of discrepancy $\frac{1}{2}$ and $F$ is smooth along $\Gamma$ (cf. the first
	paragraph in the proof of Lemma \ref{es2}). We have $v_F(x,y,z,u)=\frac{1}{2}(a,b,1,2)$ such that $a+b=8$.
	We may write $x=\alpha t^{\frac{a}{2}}$, $y=\beta t^{\frac{b}{2}}$, $z=\gamma t^{\frac{1}{2}}$ and $u=\delta t$, for some $\alpha$, $\beta$
	$\gamma$, $\delta$ and $t\in\Oo_W(U)$ such that $U$ is a affine open set contains $\Gamma$ and $t$ is the local defining function of $F$.
	Note that since $v_F(x)<v_E(x)$, $v_F(y)<v_E(y)$, $v_F(z)<v_E(z)$ but $v_F(u)=v_E(u)$, we have $\alpha$, $\beta$ and $\gamma$ vanish on
	$\Gamma$ but $\delta$ do not vanish near $\Gamma$.\par
	We may assume $b\geq3$. Let $X_1$ be the blowing-up the ideal $(x,z^6+u^{11})$. As the computation in Section \ref{sness} there is a chart
	$U_s\in X_1$ which is defined by
	\[ U_s=\left(\tcc{$xs-(z^6+u^{11})=0$}{$y-s(z^2+u)=0$}\right)\cong(xs-(z^6+u^{11})=0)\subset\A^4_{(x,s,z,u)}/\frac{1}{2}(1,1,1,0).\]
	One can see that $v_F(x,s,z,u)=\frac{1}{2}(a,b-2,1,2)$. Let $X'\rightarrow X_1$ be the weighted blowing-up the origin of $U_s$ with this
	weight. Consider the chart $U'_u\subset X'$. The local coordinate of $X'$ is given by 
	\[x'=x/u^{\frac{a}{2}}=\alpha/\delta^{\frac{a}{2}},\quad
		s'=s/u^{\frac{b-2}{2}}=y/(z^2+u)u^{\frac{b-2}{2}}=\beta/(\gamma^2+\delta)\delta^{\frac{b-2}{2}},\quad
		z'=z/u^{\frac{1}{2}}=\gamma/\delta^{\frac{1}{2}}\]
	and $u=\delta t$. One can see that there is a rational map from $W$ to $X'$ which maps $F$ to a divisor but maps $\Gamma$ to the origin.
	However, since $X_1$ is $\Q$-factorial, $X'$ is $\Q$-factorial. This leads a contradiction.
\end{eg}

\end{document}